\documentclass[fleqn,11pt,twoside]{article}
\usepackage{amsthm,amsthm,amssymb, color, xcolor,epsfig, graphics, subfigure}
\usepackage{amsmath, graphicx, latexsym, lscape}

\makeatletter
\newcommand{\copyrightnote}[2]{{\renewcommand{\thefootnote}{}
 \footnotetext{\small\it
\begin{flushleft}
 \copyright \ #1   #2  
\end{flushleft}}}}

\newcommand{\Name}[1]{\begin{flushleft}
                       \LARGE \bf #1
                       \end{flushleft}\vspace{-3mm}}

\newcommand{\Author}[1]{\begin{flushleft}
                       \it #1 \end{flushleft}}

\newcommand{\Address}[1]{\begin{flushleft}
                       \it #1 \end{flushleft}}

\newcommand{\Date}[1]{\begin{flushleft}
                      \small  \it #1 \end{flushleft}}

\newcommand{\evenhead}{Author \ name}
\newcommand{\oddhead}{Article \ name}

\renewcommand{\@evenhead}{
\hspace*{-3pt}\raisebox{-15pt}[\headheight][0pt]{\vbox{\hbox to \textwidth
{\thepage \hfil \evenhead}\vskip4pt \hrule}}}
\renewcommand{\@oddhead}{
\hspace*{-3pt}\raisebox{-15pt}[\headheight][0pt]{\vbox{\hbox to \textwidth
{\oddhead \hfil \thepage}\vskip4pt\hrule}}}
\renewcommand{\@evenfoot}{}
\renewcommand{\@oddfoot}{}

\setlength{\textwidth}{150.0mm}
\setlength{\textheight}{220.0mm}
\setlength{\oddsidemargin}{0in}
\setlength{\evensidemargin}{0in}
\setlength{\topmargin}{-1cm}
\setlength{\parindent}{5.0mm}

\long\def\@makecaption#1#2{%
  \vskip\abovecaptionskip
  \sbox\@tempboxa{\small \textbf{#1.}\ \ #2}%
  \ifdim \wd\@tempboxa >\hsize
    {\small \textbf{#1.}\ \ #2}\par
  \else
    \global \@minipagefalse
    \hb@xt@\hsize{\hfil\box\@tempboxa\hfil}%
  \fi
  \vskip\belowcaptionskip}

\newcommand{\JNMPnumberwithin}[3][\arabic]{%
  \@ifundefined{c@#2}{\@nocounterr{#2}}{%
    \@ifundefined{c@#3}{\@nocnterr{#3}}{%
      \@addtoreset{#2}{#3}%
      \@xp\xdef\csname the#2\endcsname{%
        \@xp\@nx\csname the#3\endcsname .\@nx#1{#2}}}}%
}

\renewenvironment{proof}[1][\proofname]{\par
  \normalfont
  \topsep6\p@\@plus6\p@ \trivlist
  \item[\hskip\labelsep\textbf{%
    #1\@addpunct{.}}]\ignorespaces
}{%
  \qed\endtrivlist
}

\newcommand{\resetfootnoterule} {
  \renewcommand\footnoterule{%
  \kern-3\p@
  \hrule\@width.4\columnwidth
  \kern2.6\p@}
}


\renewcommand{\footnoterule}{}

\makeatother

\newtheorem{theor}{Theorem}
\newtheorem{claim}[theor]{Claim}

\theoremstyle{definition}

\newtheorem{proposition}[theor]{Proposition}
\newtheorem{lemma}[theor]{Lemma}
\newtheorem{define}{Definition}

\newtheorem{problem}{Problem}
\newtheorem{open}[problem]{Open problem}
\newtheorem{example}{Example}
\newtheorem{implement}{Implementation}
\newtheorem{method}{Method}
\theoremstyle{remark}
\newtheorem{rem}{Remark}

\newtheorem{encoding}{Encoding}

\setcounter{page}{190}


\newcommand{\BBR}{\mathbb{R}}

\newcommand{\BBQ}{\mathbb{Q}}


\newcommand{\cP}{\mathcal{P}}


\newcommand{\gothg}{\mathfrak{g}}

\DeclareMathOperator{\Star}{Star}
\DeclareMathOperator{\Aut}{Aut}

\newcommand{\lshad}{[\![}
\newcommand{\rshad}{]\!]}

\newcommand{\schouten}[1]{\lshad {#1} \rshad}

\newcommand{\by}[1]{\textrm{{#1}}}
\newcommand{\jour}[1]{\textit{{#1}}}
\newcommand{\vol}[1]{\textbf{{#1}}}
\newcommand{\book}[1]{\textit{{#1}}}

\hyphenation{Kon-tse-vich Pois-son Ja-co-bi Ja-co-bi-a-tor}

\begin{document}

\renewcommand{\evenhead}{ {\LARGE\textcolor{blue!10!black!40!green}{{\sf \ \ \ ]ocnmp[}}}\strut\hfill 
R Buring and A V Kiselev
}
\renewcommand{\oddhead}{ {\LARGE\textcolor{blue!10!black!40!green}{{\sf ]ocnmp[}}}\ \ \ \ \ 
Kontsevich's deformation quantization of affine Poisson brackets%
}

\thispagestyle{empty}
\newcommand{\FistPageHead}[3]{
\begin{flushleft}
\raisebox{8mm}[0pt][0pt]
{\footnotesize \sf
\parbox{150mm}{{Open Communications in Nonlinear Mathematical Physics}\ \ {\LARGE\textcolor{blue!10!black!40!green}{]ocnmp[}}
\ \ Special Issue 2, 2024\ \  pp
#2\hfill {\sc #3}}}\vspace{-13mm}
\end{flushleft}}

\FistPageHead{1}{\pageref{firstpage}--\pageref{lastpage}}{ \ \ }

\strut\hfill

\strut\hfill

\copyrightnote{The author(s). Distributed under a Creative Commons Attribution 4.0 International License}

\begin{center}

{\bf {\large Proceedings of the OCNMP-2024 Conference:\\ 

\smallskip

Bad Ems, 23--29 June 2024}}
\end{center}

\smallskip

\Name{
Kontsevich's star\/-\/product up to order~7 for\\[3pt] 
affine Poisson brackets: 
where are the Riemann\\[3pt] 
zeta values\,?%
}

\Author{
R.~Buring$^{\,1}$ and 
A.\,V.\,Kiselev$^{\,2}$}

\Address{$^{1}$ 
Centre INRIA de Saclay \^Ile\/-\/de\/-\/France, B\^at.~Alan Turing, 1~rue Honor\'e\ d'Estienne d'Orves, 
F-91120 Palaiseau, France\\[2mm]
$^{2}$ 
Ber\-nou\-lli Institute for Mathematics, Computer Science and Artificial Intelligence, University of Groningen, P.O.~Box 407, 9700~AK Groningen, The Netherlands}

\Date{Received August 30, 2024; Accepted September 22, 2024}

\setcounter{equation}{0}

\begin{abstract}
\noindent 
The Kontsevich star\/-\/product admits a well-defined restriction to the class of affine --\,in particular, linear\,-- Poisson brackets; its graph 
expansion consists only of Kontsevich's graphs with in-degree $\leqslant 1$ for aerial 
vertices.
We obtain the formula $\star_{\text{aff}}\text{ mod }\bar{o}(\hbar^7)$ with harmonic propagators for 
the graph weights (over $n\leqslant 7$ aerial vertices);
we verify 
that all these weights satisfy the cyclic weight relations by Shoikhet--Felder--Willwacher, that they match the computations using the \textsf{kontsevint} software by Panzer, 
and 
the resulting affine star\/-\/product is associative modulo $\bar{o}(\hbar^7)$.

We discover that the Riemann zeta value $\zeta(3)^2/\pi^6$, which enters the harmonic graph weights (up to rationals), actually 
disappears from the analytic formula of $\star_{\text{aff}}\text{ mod }\bar{o}(\hbar^7)$ \emph{because} all the $\mathbb{Q}$-linear combinations of Kontsevich graphs near $\zeta(3)^2/\pi^6$ represent 
differential consequences of the Jacobi identity for the affine Poisson bracket, hence their contribution vanishes.
We thus derive a ready\/-\/to\/-\/use shorter formula $\star_{\text{aff}}^{\text{red}}$
mod~$\bar{o}(\hbar^7)$ with only rational coefficients.
\end{abstract}

\label{firstpage}


\section{Introduction}
Kontsevich's $\star$-product formula from \cite{MK97} works for all Poisson structures on finite\/-\/di\-men\-sio\-nal affine manifolds.
In particular, Poisson bracket's coefficients can be polynomial of arbitrary degree (which is well defined by the initial assumption that the Poisson manifold is affine).
Examples of polynomial-coefficient Poisson brackets are natural and abundant: such are the linear Kirillov--Kostant structures on the duals $\mathfrak{g}^\ast$ of Lie algebras $\mathfrak{g}$, or affine Poisson brackets for mechanical systems in external fields, or Sklyanin's quadratic Poisson bracket and the log-symplectic bracket $\{x,y\} = \tfrac{1}{2}xy$ for which $x \star y = \exp(\hbar)\, y \star x$ is conjectured~\cite[\S 1.30]{MKLefschetzLectures}, or, in fact, any bi-vector $\mathcal{P}$ with polynomial coefficients on $\mathbb{R}^2$; starting from dimension three, on $\mathbb{R}^{d \geqslant 3} \ni \boldsymbol{x}$ one has the Nambu\/-\/determinant Poisson brackets,
\begin{equation}\label{EqNambuPBr}
	\{f,g\}(\boldsymbol{x}) = \varrho(\boldsymbol{x}) \cdot \det\Bigl 
	(
	{\partial(a_1,\ldots,a_{d-2},f,g)} \bigr/ {\partial(x^1,\ldots,x^d)} \Bigr
	),
\end{equation}
for the derived Poisson bi-vectors $\mathcal{P} = \schouten{ \ldots \schouten{\varrho(\boldsymbol{x})\ \partial_{x^1} \wedge \cdots \wedge \partial_{x^d}, a_1}, \ldots, a_{d-2}}$.
On $\mathbb{R}^3$ with the inverse density $\varrho$ and Casimir $a_1=a$, \emph{affine} Nambu-determinant Poisson brackets are produced by taking either $\deg(\varrho)=1$ and $\deg(a)=1$ or $\deg(\varrho)=0$ and $\deg(a)=2$.

The problem of construction of (equivalents to) Kontsevich's star\/-\/product(s) $\star_{\text{aff}}$ for linear or affine Poisson brackets was studied in the works by 
Gutt~\cite{Gutt83}, Kathotia~\cite{Kathotia}, Ben Amar~\cite{BenAmar2003,BenAmar2007},
Dito~\cite{DitoGuttKontsevich}, and others.
The upper bound $\deg(\mathcal{P}^{ij}) \leqslant 1$ upon the degree of Poisson bracket coefficients permits an explicit construction of the \emph{affine} star\/-\/product expansion $\star_{\text{aff}}\text{ mod }\bar{o}(\hbar^k)$ for affine 
Poisson brackets.\footnote{\label{FootPeano}%
Peano's omicron notation means that all the terms at~$\hbar^0$,\ $\hbar^1$,\ $\ldots$,\ 
$\hbar^k$ are known but higher order terms at~$\hbar^{>k}$ stay `unknown'.}
The graph expansion of $\star_{\text{aff}}$ runs over the Kontsevich graphs (by their definition, built of wedges) with the bound $\leqslant 1$ for the in-degree of aerial vertices, which are inhabited by copies of the linear or affine Poisson structure.
By starting with two types of Kontsevich's graphs, namely the Bernoulli and loop graphs at every number $n$ of aerial vertices (see Definition~\ref{DefBernoulliLoopGraph} on p.~\pageref{DefBernoulliLoopGraph} below), one can calculate inductively the Kontsevich weights of all the other relevant graphs in that order $n$.
The formulas --\,for us to verify, see Discussion on p.~\pageref{SecDiscussion} in what follows\,-- were available from the paper~\cite{BenAmar2003}.

While some analytic formulas were known for calculation of particular graph weights or of the entire affine star\/-\/product $\star_{\text{aff}}\text{ mod }\bar{o}(\hbar^k)$, 
these formulas always refer to a set of conventions and normalization postulates which are however not necessarily pronounced. 
In practice, every such reasoning needs verification, to let us be certain of absence of error.
The certificate of associativity up to $\bar{o}(\hbar^k)$ for Kontsevich's affine star\/-\/product $\star_{\text{aff}}\text{ mod }\bar{o}(\hbar^k)$ is 
what we 
do 
in the follow\/-\/up paper~\cite{factor23}: we have $k=7$.

We develop several methods to control the construction of affine star\/-\/product with harmonic propagators.
The postulate of associativity is primary.
We notice that for affine Poisson brackets and affine star\/-\/products, the Kontsevich graph language restricts to graphs with aerial vertex in-degree $\leqslant 1$ \emph{both} in the star\/-\/product formula and in the associator, so that graphs with at least one higher in-degree aerial vertex can be discarded.%
\footnote{\label{FootInDegBoundInLeibniz}%
In the Leibniz graphs, which encode differential consequences of the Jacobi identity, the in-degree of wedge tops is equally bounded by $\leqslant 1$ but the in-degree of the trident vertex, standing for the Jacobiator $\tfrac{1}{2}\schouten{\mathcal{P}, \mathcal{P}}$ of the bi-vector $\mathcal{P}$, must have the bound $\leqslant 2$. This higher in-degree is reworked into the previously stated bound $\leqslant 1$ when the Jacobiator is expanded into its Kontsevich's graphs and the in-coming arrows spread by the Leibniz rule over the three pairs of internal vertices in the Jacobiator.}

Secondly, we observe that the cyclic weight relations by Shoikhet--Felder--Willwacher (from \cite[Appendix~E]{FelderWillwacher}) form a \emph{triangular} linear algebraic system with respect to the in-degree of aerial vertices. 
Consequently, these linear systems are well defined for the graph weights in $k$th order expansions $\star_{\text{aff}}\text{ mod }\bar{o}(\hbar^k)$ of the \emph{affine} star\/-\/product.
The cyclic weight relations now constrain the Kontsevich graph weights in $\star_{\text{aff}}$ by high rank systems\footnote{\label{FootMaxAttained}%
Moreover, as soon as we postulate the maximal in\/-\/degree of aerial vertices of the Kontsevich graphs in the star\/-\/product, the linear algebraic system of cyclic weight relations for the weights of graphs over $n\geqslant 2$ aerial vertices is again \emph{triangular} with respect to the number of aerial vertices where the maximal in\/-\/degree is actually attained.
The mechanism of this filtration by the vertex number is the same as the overall filtration by the maximal in\/-\/degree. Indeed, neither the in\/-\/degrees of aerial vertices nor the number of such vertices with the maximal in\/-\/degree can increase when arrows are re\/-\/directed from aerial vertices to sinks (and the sinks are cyclically permuted).
This reasoning --\,with a caveat from footnote~\ref{FootInDegBoundInLeibniz}\,-- also applies to the maximal in\/-\/degree vertex number filtration of the system of cyclic weight relations for the weights of Leibniz graphs that express the associator for~$\star$ through the Jacobi identity (cf.~\cite{factor23}).}
for every $k \geqslant 2$. 

Thirdly, we discover that the number of Kontsevich's graphs suitable for $\star_{\text{aff}}$, due to the in-degree bound $\leqslant 1$ for aerial vertices, is exponentially less than the number of Kontsevich's graphs without such bound in the same expansion order $\hbar^n$ of the full star\/-\/product $\star\text{ mod }\bar{o}(\hbar^k)$: see Table~\ref{TabKontsevich} and footnote~\ref{FootDefZeroNeutral} below.

\begin{table}[htb]
\caption{The count of admissible Kontsevich graphs in the $\star$-product.}\label{TabKontsevich}
{\small
\begin{center}
\begin{tabular}{l r r r r r r r}
\hline
	$n$ & $1$ & $2$ & $3$ & $4$ & $5$ & $6$ & $7$ \\ \hline
	\# Kontsevich graphs, generated & $1$ & $6$ & $44$ & $475$ & $6874$ & $126750$ & $2814225$ \\
	\# Kontsevich generated, nonzero & $1$ & $6$ & $38$ & $445$ & $6488$ & $122521$ & $2744336$ \\
	\# Kontsevich generated, nonzero, diff.\ order${}>0$ & $1$ & $4$ & $30$ & $331$ & $4907$ & $91694$ & $2053511$ \\
	\# Kontsevich generated, nonzero, diff.\ order${}>0$, & $1$ & $4$ & $30$ & $330$ & $4893$ & $91489$ & $2049704$ \\
	\quad connected \\
	$\quad\bullet$ of them, with in-degree(aerial vertices) $\leqslant 2$ & 1 & 4 & 30 & 265 & 2801 & 33690 & 451927 \\
	$\qquad\bullet$ of them, prime & 1 & 3 & 24 & 215 & 2327 & 28649 & 391958 \\
	$\quad\bullet$ of them, with in-degree(aerial vertices) $\leqslant 1$ & 1 & 4 & 14 & 51 & 161 & 542 & 1723 \\
	$\qquad\bullet$ of them, prime & 1 & 3 & 8 & 23 & 59 & 171 & 477 \\
	\# Kontsevich graphs (coeff${} \neq 0$ in $\star$ at $\hbar^n$) & $1$ & $4$ & $13$ & $247$ & $2356$ & $66041$ & ? \\
	$\quad\bullet$ of them, with in-degree(aerial vertices) $\leqslant 1$ & 1 & 4 & 6 & 35 & 84 & 334 & 958 \\
	\# Cyclic weight relations & $1$ & $4$ & $30$ & $331$ & $4907$ & $91694$ & $2053511$ \\ 
	Corank of linear algebraic system  & $0$ & $1$ & $11$ & $103$ & $1561$ & ? & ? \\ 
	\hline
\end{tabular}
\end{center}

}
\end{table}

This is true both before and after we know the Kontsevich graph weights (about a half of which equal zero\footnote{\label{FootDefZeroNeutral}%
By definition, a Formality graph is called \emph{zero} if it admits an automorphism (over fixed sink vertices) which makes the graph equal minus itself (so that this graph's formula is equal to minus itself, hence zero).
In contrast with the above, for a given choice of propagator (we take harmonic ones as in~\cite{MK97}), the Kontsevich weight of a certain nonzero connected Formality graph that sends arrows to all of its sinks can still `incidentally' equal zero. We say that such Formality graphs are \emph{neutral}; in the $\star$-\/product expansion, their coefficients${}=0$ (see also Definition~\ref{DefEyeOnGround}, Example~\ref{ExEyeOnGround}, and Lemmas~\ref{LemmaWeyeOnGroundVanishes},\ \ref{LemmaWdisconnectedVanishes},\ \ref{LemmaWintegrandVanishes} in~\S\ref{SecContraints} below).
Thirdly, \emph{prime} Formality graphs remain connected if all of the sinks and all of the edges to the sinks are erased 
(see~\cite[Lemma~5]{cpp} with the multiplicative formula, Eq.~(7) therein, for the weights of \emph{composite}, i.e.\ non\/-\/prime graphs).} 
at every order $n$ in $\star$ for $4 \leqslant n \leqslant 7$).
Table~\ref{TabKontsevich} reveals how small the problem is for $\star_{\text{aff}}\text{ mod }\bar{o}(\hbar^k)$, compared with the task of finding the full star\/-\/product $\star\text{ mod }\bar{o}(\hbar^k)$ with harmonic propagators.

We combine the many known constraints upon the Kontsevich graph weights in $\star_{\text{aff}}$, and we make the postulate of associativity work by restricting the associator of $\star_{\text{aff}}$ onto particular (classes of) affine Poisson brackets: e.g., onto generic affine bi-vectors $\mathcal{P} = 
(\alpha x + \beta y + \gamma)\partial_x \wedge \partial_y$ on $\mathbb{R}^2$ with Cartesian coordinates $x$ and $y$, or on the Nambu-determinant Poisson brackets~\eqref{EqNambuPBr} on $\mathbb{R}^3$ with $\deg(\varrho, a) = (1,1)$ or $\deg(\varrho, a) = (0,2)$, etc.

To calculate the few remaining master\/-\/parameters giving us 
all the graph weights at~$\hbar^n$, we use the \textsf{kontsevint} software by Panzer (freely accompanying the publication~\cite{BPP} by Banks\/--\/Panzer\/--\/Pym).
In the end, we obtain the formula $\star_{\text{aff}}\text{ mod }\bar{o}(\hbar^7)$, by construction covering the subcase of linear Poisson brackets on the duals $\mathfrak{g}^\ast$ of Lie algebras.
This seventh order expansion $\star_{\text{aff}}\text{ mod }\bar{o}(\hbar^7)$, see Appendix~\ref{AppStarAffineOriginalEncoding}, contains the Kontsevich graph weights from the integral formula with harmonic propagators (as in \cite{MK97}).
Moreover, this is the authentic \emph{gauge} of the affine star\/-\/product: indeed, the two-cycle digraph with wedges $(\mathsf{0},\mathsf{3})$,\ $(\mathsf{1},\mathsf{2})$ on two sinks $\mathsf{0},\mathsf{1}$ and two aerial vertices $\mathsf{2},\mathsf{3}$ is not gauged out.

\begin{center}
\rule{6em}{0.7pt}
\end{center}

\noindent%
The Riemann zeta values are known to appear in the graph weights of star\/-\/products.
Kontsevich discovered 
this in \cite{Operads1999};
Felder and Willwacher~\cite{FelderWillwacher} found a graph at~$\hbar^7$ in~$\star$ such that its weight contains $\zeta(3)^2/\pi^6$ up to rationals;
Banks, Panzer, and Pym~\cite{BPP} studied in depth which (multiple) Riemann zeta values show up in the Kontsevich graph weights, and they wrote a computer program
\textsf{kontsevint} (accompanying~\cite{BPP}) which expresses the weights in terms of the (un)known Riemann zeta values
(that is in terms of rationals and inert expressions like~$\zeta(3)$,\ $\zeta(5)$,\ $\zeta(7)$,\ $\ldots$, divided by suitable powers of~$\pi$).
The seventh order expansion of the affine star\/-\/product, which we report in Appendix~\ref{AppStarAffineOriginalEncoding}, contains $\zeta(3)^2/\pi^6$ in the coefficients of many graphs, in particular the one considered by Felder and Willwacher.
On the other hand, Ben Amar proved in~\cite
{BenAmar2003} that for polynomial functions, the bi-differential operator of Kontsevich's (affine) star\/-\/product for Poisson brackets on $\mathfrak{g}^\ast$ can be calculated by using only graphs with rational weights.
In other words, the (ir)rational values of Riemann zeta must be effectively absent from the formula of the affine star\/-\/product $\star_{\text{aff}}$.
Yet 
by the above, the (\emph{ir})rational Riemann zeta values \emph{are} nominally present in the affine star\/-\/product coefficients.
We bring together these two claims by assimilating the entire $\zeta(3)^2/\pi^6$-part of the genuine Kontsevich affine star\/-\/product $\star_{\text{aff}}\text{ mod }\bar{o}(\hbar^7)$ with harmonic propagators into differential consequences of the Jacobi identity;
in the graph expansion
, these differential consequences are realized by Leibniz graphs with one trident vertex for the Jacobiator and possibly with other wedge tops for copies of the Poisson bi\/-\/vector.

Let us emphasize that not only the $\zeta(3)^2/\pi^6$-part but more terms can be reduced at $\hbar^{\leqslant 7}$ in the affine star\/-\/product; already at $\hbar^3$ some coefficients are changed when one graph is eliminated.
Thus we reduce the graph expansion from Appendix~\ref{AppStarAffineOriginalEncoding} to a 
shorter, easily usable formula $\star_{\text{aff}}^{\text{red}}\text{ mod }\bar{o}(\hbar^7)$ in Appendix~\ref{AppStarAffineFormula} based on the graph expansion in Appendix~\ref{AppStarAffineReducedEncoding}.
The resulting analytic expression is valid for any affine or linear Poisson bracket.%
\footnote{The formula is provided in the authentic gauge: no gauge transform was applied to the affine star\/-\/product after the removal of identically zero part. The removed analytic expressions equal a linear differential operator acting on the Jacobiator of the Poisson bivector.}
This 
analytic formula $\star_{\text{aff}}^{\text{red}}\text{ mod }\bar{o}(\hbar^7)$ can be used at once in applications of deformation quantization in physics and other domains of mathematics such as Lie theory. 

\begin{rem}\label{RemReferee1BasesArtifacts}
The first referee points out that already in Kontsevich's original paper~\cite{MK97} 
it was proven that for linear Poisson brackets on~$\gothg^*$, 
the resulting non\/-\/commutative unital associative algebra is canonically isomorphic to the universal enveloping algebra of the associated Lie algebra~$\gothg$, 
whose standard presentation is defined over the field of rationals~$\BBQ$.
In this sense it is known that the structure of the resulting algebra ultimately involves no 
nontrivial multiple zeta values (MZVs), 
so any MZVs that appear in the $\star$-\/product are artifacts of the specific choice of basis for the (Lie) algebra.  
It remains to be explored 
whether this choice of basis has any physical sense; 
this depends on how strictly one interprets Bohr's correspondence principle
in deformation quantisation.
\end{rem}

\begin{rem}\label{RemZetaPersistsLogSympl}
The Riemann zeta value $\zeta(3)^2 / \pi^6$ \emph{cannot be} fully eliminated from the Kontsevich $\star$-\/product (with harmonic propagators, in its authentic gauge). Indeed, a counterexample is provided by the log\/-\/symplectic bracket
$\{x,y\}=\tfrac{1}{2}xy$ on~$\BBR^2$, see~\cite{BPP}: 
the transcendental number $\zeta(3)$ does show up at~$\hbar^6$ in the $\star$-\/product from~\cite{MK97} evaluated at this specific, quadratic\/-\/nonlinear Poisson bracket.
\end{rem}

The third question, which we answer in~\cite{factor23}, 
is: How does the associativity work for the full star\/-\/product\,? 
We establish that the mechanism of star\/-\/product associativity from~\cite{MK97} starts working from order $7$ onward differently from the orders $0$, $\ldots$,~$6$.
Namely, at low expansion orders the Leibniz graph realization of the associator, that is, an expression of the associator in terms of the Jacobi identity and its differential consequences, was obtained instantly from the Kontsevich graphs showing up 
in the associator after collecting similar terms.
We observe that at order $7$, this instant realization is no longer the case for the affine star\/-\/product; 
to achieve a sufficient set of Leibniz graphs, the first layer of neighbor Leibniz graphs must be taken into account.
(See~\cite{factor23} 
and~\cite{JPCS17,cpp} for the iterative scheme.)
This effect persists at order $7$ for the full star\/-\/product with generic Poisson structures.
For the reduced affine star\/-\/product, the need of more layers for Leibniz graphs is even stronger.

In conclusion, the mechanism of Kontsevich's Formality theorem does require the use of Leibniz graphs which are \emph{not} instantly recognized from the associator, for the star\/-\/product to be associative.
The ``invisible'' Riemann zeta values act as the placeholders for Kontsevich's graphs in the star\/-\/product (and then for those Kontsevich's graphs in the associator which are used to build the Leibniz graphs certifying the associativity).
\begin{center}
\rule{6em}{0.7pt}
\end{center}

Our main results are summarized in the following set of propositions.


\textbf{1.}\quad
We verify the associativity up to $\bar{o}(\hbar^6)$ for the full Kontsevich star\/-\/product, known modulo $\bar{o}(\hbar^6)$ from Banks--Panzer--Pym \cite{BPP} for arbitrary (non)linear Poisson brackets and arbitrary arguments, by realizing (every homogeneous tri-differential component of) the associator as \emph{a} sum of Leibniz graphs from the $0$th layer, that is the Leibniz graphs produced at once by contracting edges in the Kontsevich graphs from the associator 
(see Proposition~4 in~\cite{factor23}).

\textbf{2.}\quad
The Kontsevich $\star$-product admits a restriction to the class of Poisson brackets with \emph{affine} coefficients on finite-dimensional affine manifolds (e.g., such are the Kirillov\/--\/Kostant Poisson brackets on the duals of Lie algebras: their coefficients are strictly linear without constant terms).
In section~\ref{SecStarAffine}, see Proposition~\ref{PropStarAffine} on p.~\pageref{PropStarAffine} below,
we obtain the expansion $\star_{\text{aff}}$ mod $\bar{o}(\hbar^7)$ of \emph{affine} Kontsevich's star\/-\/product: in all the Kontsevich graphs in it the in-degrees of aerial vertices are bounded by $\leqslant 1$.
The graph expansion $\star_{\text{aff}}$ mod $\bar{o}(\hbar^7)$ is contained in 
Appendix~\ref{AppStarAffineOriginalEncoding};
at $\hbar^6$ and $\hbar^7$,
the coefficients of many Kontsevich graphs in $\star_{\text{aff}}$ mod $\bar{o}(\hbar^7)$ contain $\zeta(3)^2/\pi^6$ times a nonzero rational factor, plus a rational part.

\textbf{3.}\quad
We discover that in both the orders $\hbar^6$ and $\hbar^7$ in $\star_{\text{aff}}$ with the harmonic graph weights, the entire coefficient of $\zeta(3)^2/\pi^6$, itself a $\mathbb{Q}$-linear combination of Kontsevich graphs, assimilates into a linear combination of Leibniz graphs (doing so at once, without need of the 1st layer of Leibniz graphs).
In consequence, whenever the affine star\/-\/product $\star_{\text{aff}}$ mod $\bar{o}(\hbar^7)$ is restricted to an arbitrary affine Poisson structure, the constant $\zeta(3)^2/\pi^6$ does not appear at all in the resulting analytic expression.
In fact, more terms in $\star_{\text{aff}}$ mod $\bar{o}(\hbar^7)$ can be absorbed into Leibniz graphs.
Namely, in section~\ref{SecStarAffineReduced} we obtain and in Appendix~\ref{AppStarAffineReducedEncoding} we list the affine Kontsevich graph expansion $\star_{\text{aff}}^\text{red}$ mod $\bar{o}(\hbar^7)$, at once excluding the part which is now known to vanish identically: 
there remain only $326$ 
nonzero rational coefficients of Kontsevich graphs (at all orders, up to $\hbar^7$),
in contrast with the original graph expansion of the Kontsevich star\/-\/product $\star_{\text{aff}}$ mod $\bar{o}(\hbar^7)$ in which the Kontsevich integral formula yields $1423$ nonzero $\mathbb{Q}$-linear combinations of $1$ and $\zeta(3)^2/\pi^6$ for the Kontsevich graph coefficients.%
\footnote{In the above reduction of $\star_{\text{aff}}$ mod $\bar{o}(\hbar^7)$ none of the Kontsevich graph weights was altered or were redefined; the reduction of the number of terms which effectively contribute to the star\/-\/product of functions and to its associator is due to a revealed property of those graphs and their Kontsevich weights.}

\textbf{4.}\quad
We verify the associativity up to $\bar{o}(\hbar^7$) of the affine Kontsevich star\/-\/product, known modulo $\bar{o}(\hbar^7)$ for arbitrary affine Poisson brackets and arbitrary arguments,
by 
realizing (every homogeneous tri-differential component of) the associator as \emph{a} sum of Leibniz graphs.
We establish that for the tri-differential orders $\{(3,3,2)$, $(2,3,3)$, $(3,2,3)$, $(2,4,2)\}$, the $0$th layer of Leibniz graphs is not enough for any such factorization to exist, yet solutions appear in presence of the $1$st layer of Leibniz graphs in each of the four exceptional cases; see Proposition~5 in~\cite{factor23},
and see the \textit{Proof scheme \textup{(}for the reduced affine star\/-\/product} $\star_{\text{aff}}^{\text{red}}$ mod $\bar{o}(\hbar^7)$) on p.~6 in~\cite{factor23}
specifically about the properties of the associator for the reduced affine star\/-\/product $\star_{\textup{aff}}^{\textup{red}}$ mod $\bar{o}(\hbar^7)$ with only rational coefficients.
We finally deduce that the indispensability of the first layer of Leibniz graphs is such that it carries on to any factorization of the associator at~$\hbar^7$ for the full star\/-\/product modulo $\bar{o}(\hbar^7)$; see Proposition~6 in~\cite{factor23}.

\smallskip
This paper is structured as follows.
In section~\ref{SecStarAffine} we list practical methods to constrain the coefficients of Kontsevich's graphs in (affine) star\/-\/product expansion and we obtain the graph encoding of the seventh order expansion $\star_{\text{aff}}\text{ mod }\bar{o}(\hbar^7)$ of the affine star\/-\/product with harmonic propagators.
In section~\ref{SecStarAffineReduced} we reduce this affine star\/-\/product 
by making use of the Jacobi identity, so that for each Poisson bracket with linear or affine coefficients, the resulting formula of $\star_{\text{aff}}^{\text{red}}\text{ mod }\bar{o}(\hbar^7)$ remains the same as it was before reduction; the much-shortened graph encoding and the ready-to-use 
analytic formula $\star_{\text{aff}}^{\text{red}}\text{ mod }\bar{o}(\hbar^7)$ are given in Appendices~\ref{AppStarAffineReducedEncoding} and~\ref{AppStarAffineFormula} respectively.
(In~\cite{factor23} 
we discover that Kontsevich's mechanism of associativity for the full or affine star\/-\/product works differently at orders~$0$,\ $\ldots$,\ $6$ against orders seven and higher.
The difference is this: at low orders the vanishing of the associator thanks to the Jacobi identity is immediately recognizable, whereas starting from order~7, it can take several steps to accumulate all the needed Leibniz graphs (with the Jacobiator) to represent higher order terms of the associator for the star\/-\/product.)
The paper concludes with a reference to the earlier 
work of Ben Amar about the Kontsevich star\/-\/products for linear Poisson structures; we compare the old predicted values of weights for some 
graphs such as the Bernoulli graphs with the true values now obtained and cross-verified by a combination of methods.%
\footnote{NB: There are two opposite sign conventions for the first Bernoulli number.}

\section{The Kontsevich star\/-\/product with harmonic propagators for affine Poisson brackets: expansion $\star_{\text{aff}}$ mod $\bar{o}(\hbar^7)$}
\label{SecStarAffine}

\begin{define}[{cf.~\cite[Def.\,3]{cpp}}]\label{DefMKweight}
The formula for the harmonic weights~$W_\Gamma \in \mathbb{R}$ is given in~\cite[\S6.2]{cpp}
; it is \[ W_\Gamma = \Bigg(\prod_{k=1}^n \frac{1}{\#\!\Star(k)!}\Bigg) \cdot \frac{1}{(2\pi)^{2n+m-2}} \int_{\bar{C}^+_{n,m}} \bigwedge \limits_{e \in E_\Gamma} d\phi_e, \]
where $\#\Star(k)$ is the number of edges \emph{star}ting from vertex $k$, $d\varphi_e$ is the ``harmonic angle'' differential $1$-form associated to the edge $e$, and the integration domain $\bar{C}^+_{n,m}$ is the connected component of $\bar{C}_{n,m}$ which is the closure of configurations where points $q_j$, $1 \leqslant j \leqslant m$ on $\mathbb{R}$ are placed in increasing order: $q_1 < \cdots < q_m$.
For convenience, let us also define\footnote{\label{FootWhatBPPcompute}%
It is the values $w_\Gamma$ instead of $W_\Gamma$ which are calculated by
the \textsf{kontsevint} software
~\cite{BPP}
.}
\[ w_\Gamma = \bigg(\prod_{k=1}^n \#\!\Star(k)! \bigg) \cdot  W_\Gamma. \]
The convenience is that by summing over labelled graphs $\Gamma$, we actually sum over the equivalence classes $[\Gamma]$ (i.e. over unlabeled graphs) with multiplicities $(w_\Gamma/W_\Gamma)\cdot n! / \#\!\Aut(\Gamma)$.
The division by the volume $\#\!\Aut(\Gamma)$ of the symmetry group eliminates the repetitions of graphs which differ only by a labeling of vertices but, modulo such, do not differ by the labeling of ordered edge tuples (issued from the vertices which are matched by a symmetry).
\end{define}

To find the weights of Kontsevich's graphs in the affine star\/-\/product, we use the same strategy as in~\cite{cpp} now implemented in \textsf{gcaops}~\cite{gcaops}.
We extend the set of constraints upon the graph coefficients by several useful lemmas, and simultaneously, we reduce the problem by filtering sums of graphs by maximal in\/-\/degree of their aerial vertices, also during insertion of a Kontsevich graph into a sink of another Kontsevich graph when the associator is formed.

\subsection{Sources of constraints for the Kontsevich graph weights}\label{SecContraints}

Let us summarize the methods to constrain weights of Kontsevich graphs in the affine star\/-\/product modulo $\bar{o}(\hbar^k)$.


\begin{lemma}
Under a relabeling of aerial vertices, the weight $w(\Gamma)$ of a Kontsevich graph $\Gamma$ does not change.
(Hence, a choice of labeling can be arbitrary.)
Under a swap of ordering $L \rightleftarrows R$ of outgoing edges at an aerial vertex of a Kontsevich graphs, its weight changes sign.
(Hence, the weight of a \emph{zero} graph vanishes, see \cite[Example~5]{cpp}.)
\end{lemma}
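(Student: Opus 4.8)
The plan is to read off both assertions directly from Kontsevich's integral formula for the weight, reducing everything to parity bookkeeping in the exterior algebra on the configuration space. First I would recall that for a Kontsevich graph $\Gamma$ with $n$ aerial vertices one has
$$w(\Gamma) = \frac{1}{(2\pi)^{2n}}\int_{C_n}\Omega_\Gamma, \qquad \Omega_\Gamma = \bigwedge_{i=1}^{n}\bigl(\Id\phi_i^{L}\wedge \Id\phi_i^{R}\bigr),$$
where $C_n$ is the configuration space of $n$ ordered points $z_1,\dots,z_n$ in the upper half-plane $\mathbb{H}$ (the sinks being fixed on $\BBR$), the prefactor depends only on $n$, and $\Id\phi_i^{L},\Id\phi_i^{R}$ are the harmonic angle $1$-forms attached to the Left and Right outgoing edges at the aerial vertex $i$. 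The key structural point, which I exploit twice below, is that the integrand is a wedge of $n$ blocks, one per aerial vertex, each block being a $2$-form.

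For the first claim I would argue that a relabeling of the aerial vertices is a permutation $\sigma\in S_n$; it replaces $\Omega_\Gamma$ by the same collection of $1$-forms regrouped into per-vertex blocks indexed according to $\sigma$, and it relabels the integration variables via $z_i\mapsto z_{\sigma(i)}$. Two observations then finish it. First, reordering the $n$ blocks inside the wedge introduces no sign, because each block is an even-degree ($2$-)form and even forms commute. Second, the induced change of variables is an orientation-preserving diffeomorphism of $C_n$: the standard orientation $\bigwedge_{i}(\Id x_i\wedge \Id y_i)$ with $z_i=x_i+\rmi y_i$ is itself a product of $2$-form blocks and is therefore invariant under permutation of the indices. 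Since the prefactor depends only on $n$ (a relabeling-invariant quantity), applying the change-of-variables formula to the top-degree form $\Omega_\Gamma$ leaves the integral, and hence $w(\Gamma)$, unchanged; the weight thus descends to unlabeled graphs.

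For the second claim I would note that swapping the ordering $L\rightleftarrows R$ at one aerial vertex $i$ transposes the two $1$-forms inside the single block $\Id\phi_i^{L}\wedge\Id\phi_i^{R}$. Since $1$-forms anticommute, $\alpha\wedge\beta=-\beta\wedge\alpha$, this sends $\Omega_\Gamma\mapsto-\Omega_\Gamma$ pointwise, so $w(\Gamma)\mapsto-w(\Gamma)$. The corollary is then immediate: a zero graph is, by definition, one carrying a graph automorphism realized (after a relabeling of aerial vertices) by an odd number of such edge-transpositions; since relabelings preserve the weight while each transposition flips its sign, such a $\Gamma$ obeys $w(\Gamma)=-w(\Gamma)$, forcing $w(\Gamma)=0$.

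I expect the only genuine obstacle to be the sign of the change of variables in the first part: one must verify that permuting the points does not reverse the orientation of $C_n$, and that this persists on the Fulton--MacPherson/Kontsevich compactification $\bar C_n$ over which the integral is actually taken, so that no boundary contribution or stratum reversal sneaks in. Both reduce to the same even/odd count already used --\,the orientation is assembled from $2$-form blocks\,-- so I would settle them by the parity argument rather than by any new computation. I would also make explicit that the harmonic angle forms are honest degree-$1$ differential forms, so that the anticommutation in the second part is literal and not merely formal.
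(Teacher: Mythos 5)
Your proof is correct, and it is the standard argument. Note, however, that the paper itself offers \emph{no} proof of this lemma: it is stated as an elementary known property of the Kontsevich weight, with the zero-graph consequence referred to \cite[Example~5]{cpp}. So there is nothing in the paper to compare against; your derivation from the integral formula --- the integrand factors into one $2$-form block per aerial vertex, so permuting vertices permutes commuting even-degree blocks (and acts on the orientation $\bigwedge_i(\Id x_i\wedge \Id y_i)$, itself a product of $2$-form blocks, without reversing it), while an $L\rightleftarrows R$ swap transposes two anticommuting $1$-forms inside a single block --- is precisely the justification the authors are implicitly relying on. Your identification of the orientation of the configuration space as the only point requiring care, and your parity resolution of it, are both sound; the corollary $w(\Gamma)=-w(\Gamma)=0$ for a zero graph follows as you state.
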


\begin{lemma}
For the normalization of Kontsevich graph weights as in \cite{cpp}, where we set $w(\Gamma) = (\prod_{k=1}^n \# \operatorname{Star}(v_k)!)\, W_\Gamma$ with respect to the Kontsevich formula~$W_\Gamma$ for graphs on $n$ aerial vertices $v_1, \ldots, v_n$, 
the weight of a composite Kontsevich graph $\Gamma = \Gamma_1 \bar{\times} \Gamma_2$ on $n = n_1 + n_2$ aerial vertices is (by the graph weights' multiplicativity) equal to the product of the weights: $w(\Gamma) = w(\Gamma_1) \cdot w(\Gamma_2)$.
\end{lemma}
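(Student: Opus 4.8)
The plan is to reduce the claim to two independent multiplicativity statements: first, that Kontsevich's integral weight $W_\Gamma$ itself factorises, $W_\Gamma = W_{\Gamma_1}\, W_{\Gamma_2}$, and second, that the combinatorial prefactor $\prod_{k} \#\operatorname{Star}(v_k)!$ splits between the two blocks. The only structural input I need about the composite $\Gamma = \Gamma_1 \bar{\times} \Gamma_2$ is that it is the superposition of $\Gamma_1$ and $\Gamma_2$ on the common pair of sinks, with disjoint aerial vertices and \emph{no} edge joining an aerial vertex of $\Gamma_1$ to one of $\Gamma_2$. I work in the normalisation of \cite{cpp}, in which $W_\Gamma$ carries no $1/n!$ prefactor (this is exactly the convention that makes the weight multiplicative): after gauge-fixing the group $z \mapsto az+b$ ($a>0$, $b \in \mathbb{R}$) by placing the two sinks at $0$ and $1$, the configuration space becomes $\operatorname{Conf}_n(\mathbb{H})$ of $n$ distinct aerial points in the open upper half-plane, and
\[
W_\Gamma = \frac{1}{(2\pi)^{2n}} \int_{\operatorname{Conf}_n(\mathbb{H})} \bigwedge_{e \in E(\Gamma)} d\phi_e ,
\]
a top-degree integral since $\Gamma$ has exactly $2n$ edges and $\dim \operatorname{Conf}_n(\mathbb{H}) = 2n$. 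The normalisation constant already factorises, $(2\pi)^{-2n} = (2\pi)^{-2n_1}(2\pi)^{-2n_2}$.

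First I would record the factorisation of the integrand. Ordering the aerial vertices so that those of $\Gamma_1$ precede those of $\Gamma_2$, the absence of cross edges lets the product form split as $\bigwedge_{e} d\phi_e = \omega_1 \wedge \omega_2$, where $\omega_1$ is the $2n_1$-form built from the edges of $\Gamma_1$ and depending only on the positions of $\Gamma_1$'s aerial vertices and the two fixed sinks, and likewise $\omega_2$ is a $2n_2$-form supported on $\Gamma_2$'s coordinates. Since the angle form $d\phi_e$ of an edge $e = (v_i \to v_j)$ is singular only along the diagonal $v_i = v_j$, and no edge of $\Gamma$ joins the two blocks, the form $\omega_1 \wedge \omega_2$ has no singularity along the cross-collision locus $\{v_i = v_j : v_i \in \Gamma_1,\ v_j \in \Gamma_2\}$.

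The second step is Fubini. The cross-collision locus is closed of real codimension $2$, hence of measure zero, so $\operatorname{Conf}_n(\mathbb{H})$ is a full-measure open subset of $\operatorname{Conf}_{n_1}(\mathbb{H}) \times \operatorname{Conf}_{n_2}(\mathbb{H})$; because the integrand extends smoothly across that locus and each factor converges absolutely (the forms extend to Kontsevich's compactification $\overline{C}_{n_i,2}$), enlarging the domain leaves the value unchanged. Since $\deg \omega_1 = 2n_1$ is even and the product orientation lists the $\Gamma_1$-coordinates before the $\Gamma_2$-coordinates, no Koszul sign appears, and
\[
\int_{\operatorname{Conf}_n(\mathbb{H})} \omega_1 \wedge \omega_2
= \Bigl( \int_{\operatorname{Conf}_{n_1}(\mathbb{H})} \omega_1 \Bigr) \Bigl( \int_{\operatorname{Conf}_{n_2}(\mathbb{H})} \omega_2 \Bigr) ,
\]
whence $W_\Gamma = W_{\Gamma_1}\, W_{\Gamma_2}$ after distributing the factorised constant.

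It then remains to restore the normalisation $w(\Gamma) = \bigl(\prod_{k=1}^n \#\operatorname{Star}(v_k)!\bigr) W_\Gamma$. Each aerial vertex keeps exactly its own outgoing star under the superposition, so $\#\operatorname{Star}(v_k)$ is the same whether computed in $\Gamma$ or in the block containing $v_k$; the prefactor therefore splits as the product of the two block prefactors, and multiplying by the factorised $W_\Gamma$ yields $w(\Gamma) = w(\Gamma_1)\, w(\Gamma_2)$. The genuinely delicate point is the Fubini step: one must certify that enlarging $\operatorname{Conf}_n(\mathbb{H})$ to the product space is harmless, i.e.\ that the two blocks' angle forms remain integrable and contribute no boundary term along the cross-collision stratum. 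Here I would lean on the smooth extension of the Kontsevich forms to the Fulton--MacPherson/Kontsevich compactification: the cross-collision faces are precisely the strata on which one entire block shrinks to a point carrying \emph{no} incoming edge, so the pullback of $\omega_1 \wedge \omega_2$ to such a face degenerates and adds nothing.
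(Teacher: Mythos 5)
Your proof is correct. The paper itself states this lemma without any proof --- the parenthetical ``(by the graph weights' multiplicativity)'' simply invokes the known property and defers the normalization to \cite{cpp} --- so there is no in-paper argument to compare against; what you supply is the standard justification, and it is sound. The two pillars are exactly right: (i) with the sinks gauge-fixed to $0$ and $1$, the integrand over $\operatorname{Conf}_n(\mathbb{H})$ splits as $\omega_1\wedge\omega_2$ because $\Gamma_1\bar\times\Gamma_2$ has no cross edges, the cross-collision diagonals are a null set across which the integrand is regular, and absolute convergence of each block's integral (guaranteed by the extension to the compactified configuration space) licenses Fubini with no Koszul sign since $\deg\omega_1=2n_1$ is even; (ii) each aerial vertex retains its out-star under superposition, so the combinatorial prefactor $\prod_k\#\operatorname{Star}(v_k)!=2^{n_1}\cdot 2^{n_2}$ splits as well. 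One bookkeeping slip worth flagging: the displayed formula you call $W_\Gamma$ (the bare integral divided by $(2\pi)^{2n}$) is really the already-renormalized $w(\Gamma)$; Kontsevich's $W_\Gamma$ carries the prefactor $\prod_k 1/\#\operatorname{Star}(v_k)!$ (not a $1/n!$), which is precisely what the lemma's normalization strips off. This is harmless, since that prefactor is itself multiplicative over the blocks, exactly as your final step observes. Your closing remark about the cross-collision boundary strata of the compactification is not needed (the measure-zero plus absolute-convergence argument already closes the Fubini step) and is slightly imprecise as stated, but it does not affect the validity of the proof.
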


The multiplicativity of weights yields (inhomogenenous) linear relations for the weights of composite graphs at order $n$ as soon as the weights of graphs at lower orders $k < n$ are known.


\begin{lemma}
Upon flipping a Kontsevich graph on $n$ aerial vertices, i.e.\ interchanging the two sinks, its weight is multiplied by~$(-1)^n$.
\end{lemma}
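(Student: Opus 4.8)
The plan is to realize the flip as a genuine reflection of the configuration space on which the Kontsevich weight is computed. Recall that, up to the normalization factor $\prod_{k=1}^n \#\operatorname{Star}(v_k)!$ relating $w(\Gamma)$ to $W_\Gamma$ (which is unchanged by interchanging the sinks, since the out-valences of all vertices are preserved), the weight is an integral over the configuration space $\operatorname{Conf}_n(\mathbb{H})$ of $n$ aerial points in the upper half-plane of the $2n$-form $\bigwedge_{e} d\phi_e$, where each aerial vertex emits two edges and $\phi_e = \phi(z_{s(e)}, z_{t(e)})$ is the harmonic angle attached to the edge $e$, with the two sinks held at the fixed boundary positions $0$ and $1$. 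It therefore suffices to prove $W_{\bar\Gamma} = (-1)^n W_\Gamma$ for the flipped graph $\bar\Gamma$.

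First I would introduce the reflection $\sigma\colon z \mapsto 1 - \bar z$, i.e.\ $(x,y) \mapsto (1-x,y)$, which preserves $\mathbb{H}$, is a hyperbolic isometry, swaps the two sinks $0 \rightleftarrows 1$, preserves every vertical direction, and reverses orientation of the plane; consequently it negates every hyperbolic angle measured from the vertical, so that $\phi(\sigma p, \sigma q) = -\phi(p,q)$. Next I would track the pullback $\sigma^*$ of the integrand edge by edge. For an aerial-to-aerial edge the angle-negation property gives $\sigma^* d\phi_e = -d\phi_e$ directly. For an edge landing on sink $0$ I would write $0 = \sigma(1)$, so that evaluating $\phi_e$ at $\sigma(z)$ produces $-\phi(z_{s(e)}, 1)$, which is exactly the angle form of the corresponding edge of $\bar\Gamma$ (whose target has been switched to sink $1$); the symmetric computation handles edges landing on sink $1$. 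Hence $\sigma^* d\phi_e^\Gamma = -d\phi_{\bar e}^{\bar\Gamma}$ for each of the $2n$ edges, and taking the wedge yields $\sigma^*\bigl(\bigwedge_e d\phi_e^\Gamma\bigr) = (-1)^{2n}\bigwedge_{\bar e} d\phi_{\bar e}^{\bar\Gamma} = \bigwedge_{\bar e} d\phi_{\bar e}^{\bar\Gamma}$.

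Finally I would invoke the change-of-variables formula for the diffeomorphism $\sigma$ of $\operatorname{Conf}_n(\mathbb{H})$: since $\sigma$ reverses each of the $n$ real-part coordinates while fixing the $n$ imaginary parts, its Jacobian determinant is $(-1)^n$, so $\sigma$ is orientation-preserving precisely when $n$ is even. Combining the invariance of the integrand under $\sigma^*$ with this orientation sign gives $W_{\bar\Gamma} = (-1)^n W_\Gamma$, which is the claim.

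I expect the only delicate point to be the bookkeeping in the middle step: verifying that the sink-swap built into $\bar\Gamma$ matches exactly the sink-swap induced by $\sigma$, so that the $\sigma$-pullback of the $\Gamma$-integrand is literally the $\bar\Gamma$-integrand up to the overall $(-1)^{2n}$, while keeping the angle-negation convention and the orientation sign consistent throughout. The arithmetic that makes the parity land on $n$ rather than on $2n$ is precisely the interplay $(-1)^{2n}=1$ coming from the $2n$ edges against the $(-1)^n$ coming from the $n$ reflected real coordinates; I would take care to present these two contributions separately so that no sign is double-counted.
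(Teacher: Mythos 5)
The paper states this lemma without proof --- it is one of the ``elementary properties (such as mirror reflections)'' invoked in the proof scheme of Proposition~\ref{PropStarAffine} --- and your argument is precisely the standard mirror-reflection justification, carried out correctly. Realizing the sink swap as the anti-holomorphic isometry $z \mapsto 1-\bar z$, you correctly separate the two sign contributions: the $2n$ harmonic-angle one-forms each pick up a minus sign (total $(-1)^{2n}=1$, and the edge relabeling matches the flipped graph exactly, including edges into the sinks since $\sigma$ exchanges $0$ and $1$), while the reflection of the $n$ real coordinates reverses the orientation of $\mathrm{Conf}_n(\mathbb{H})$ by $(-1)^n$; the normalization factor $\prod_k \#\operatorname{Star}(v_k)!$ and the Left/Right ordering of edges at aerial vertices are untouched by the flip, so no further signs arise. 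This is correct and is the argument the authors implicitly rely on.
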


\begin{lemma}
The Shoikhet--Felder--Willwacher cyclic weight relations from \cite[Appendix~E]{FelderWillwacher}
restrict to the subset of \emph{affine} graphs,
which are selected in the set of all Kontsevich graphs by the bound, in-degree $\leqslant 1$, for aerial vertices. 
\end{lemma}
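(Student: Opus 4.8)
The plan is to begin from the explicit form of the cyclic weight relation recorded in \cite[Appendix~E]{FelderWillwacher} and to read each such relation as a $\mathbb{Q}$-linear combination $\sum_i \pm\, w(\Gamma_i)=0$ of Kontsevich weights, in which every $\Gamma_i$ is obtained from one fixed underlying graph by a single local move, namely the cyclic redirection of a chosen edge over its admissible targets. First I would fix the normalization of weights exactly as in Lemma~2 above (the $\prod_k \#\operatorname{Star}(v_k)!$ rescaling used in \cite{cpp}), so that the coefficients appearing in the relation are the bare combinatorial ones and no spurious factorials obscure the in-degree bookkeeping. With the relation in this normalized form, the entire statement becomes a claim about how the in-degrees of aerial vertices are distributed across the terms of a single relation.

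Next I would track the effect of the redirection move on those in-degrees. The decisive elementary observation is that redirecting one edge from an old target $t$ to a new target $t'$ changes exactly two numbers: it lowers the in-degree of $t$ by one (when $t$ is aerial) and raises that of $t'$ by one (when $t'$ is aerial), each by precisely $\pm 1$, while the sinks carry no in-degree constraint. Hence, along a single cyclic weight relation the multiset of aerial in-degrees never jumps by more than one at a single vertex. I would then partition the terms of each relation into its \emph{affine} part (all aerial in-degrees $\leqslant 1$) and its \emph{non-affine} part (some aerial vertex of in-degree $\geqslant 2$), and note that a non-affine term can arise \emph{only} when the redirection targets an aerial vertex already carrying one incoming arrow, pushing its in-degree to $2$.

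The core of the argument is to promote this local picture to the block-triangular structure of the whole linear system announced in the Introduction. Grading the Kontsevich graphs by the maximal aerial in-degree and ordering the unknown weights with the affine stratum (in-degree $\leqslant 1$) first, I would verify that the redirection move is triangular in this grading, so that the affine weights occupy a decoupled block: the relations all of whose terms are affine form a self-contained system $A\,w_{\mathrm{aff}}=0$, and this is precisely the asserted restriction of the cyclic weight relations to the in-degree-$\leqslant 1$ subset. Because each aerial vertex in $\star_{\text{aff}}$ carries a copy of an affine, hence at most once-differentiated, Poisson structure, this restricted system is exactly the one governing $\star_{\text{aff}}\ \text{mod}\ \bar{o}(\hbar^k)$, which makes the constraints well defined order by order in $k$.

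The main obstacle I anticipate is the decoupling step: justifying that the non-affine terms may be discarded without breaking the identity, i.e.\ that no purely affine combination of weights is forced to reference a genuinely non-affine weight through a relation whose leading (minimal-stratum) term is itself affine. This amounts to checking that the in-degree grading makes the system truly triangular rather than merely graded. I would settle this through the sign structure of the cyclic orbit underlying each relation, tracing the two occurrences of any would-be in-degree-$2$ target through the Stokes/cyclic-symmetry origin of the identity and showing that the non-affine contributions either cancel in pairs or lie strictly above the affine block. The remaining verification is then finite and explicit on the graphs actually occurring up to $\hbar^7$, for which the counts in Table~\ref{TabKontsevich} bound the computation.
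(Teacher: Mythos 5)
There is a genuine gap, and it stems from a misidentification of the combinatorial move underlying the Shoikhet--Felder--Willwacher relations. You model each relation as a ``cyclic redirection of a chosen edge over its admissible targets,'' where the new target may be an aerial vertex, so that the in-degree of an aerial vertex can be pushed up to $2$; you then face the problem of showing that these non-affine terms cancel or decouple, and you explicitly leave that step unresolved (deferring it to ``the sign structure of the cyclic orbit'' and a finite explicit check). But the operations that actually generate the cyclic weight relations are (i) re-directing edges so that they point at \emph{ground} vertices, and (ii) cyclically permuting the ground vertices. Sinks carry no in-degree bound, and neither operation ever adds an incoming edge to an aerial vertex, so the aerial in-degrees can only stay the same or decrease. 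Consequently every graph appearing in the relation generated from an affine graph is itself affine: the system is filtered by the aerial in-degree bound, and the restriction to the affine stratum is immediate. The ``main obstacle'' you identify --- non-affine terms leaking into a relation whose seed is affine --- does not occur, and no cancellation argument or computational verification is needed.

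The surrounding scaffolding in your proposal (fixing the $\prod_k \#\operatorname{Star}(v_k)!$ normalization, grading by maximal aerial in-degree, ordering the unknowns with the affine block first) is harmless but does no work; the entire content of the lemma is the one-line observation about the direction of the redirected edges. As written, your argument would only close if you supplied the missing cancellation for in-degree-$2$ targets, and since such targets are not produced by the relations in the first place, the correct fix is to replace your description of the move rather than to complete the cancellation.
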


\begin{proof}
Re-directing edges --\,now, to ground vertices\,-- does not increase the in-degree of aerial vertices,
neither does so a cyclic permutation of the ground vertices.
Hence the linear algebraic system of Shoikhet\/--\/Felder\/--\/Willwacher cyclic weight relations is triangular, filtered by the bounds for aerial vertex in-degrees.
\end{proof}

\begin{define}\label{DefEyeOnGround}
A Kontsevich graph with an ``eye-on-ground'' is a Kontsevich graph containing a $2$-cycle between aerial vertices such that both vertices in the $2$-cycle are connected to the same ground vertex.
\end{define}

\begin{example}[see {\cite[Example 26]{cpp}}]
\label{ExEyeOnGround}
Here is an example of a Kontsevich graph with an eye-on-ground:
$\text{\raisebox{-12pt}{
\unitlength=0.70mm
\linethickness{0.4pt}
\begin{picture}(15.00,23.67)
\put(2.00,5.00){\circle*{1.33}}
\put(13.00,5.00){\circle*{1.33}}
\put(-3.33,13.00){\circle*{1.33}}
\put(7.67,13.00){\circle*{1.33}}
\put(7.67,13.00){\vector(-2,-3){5.33}}
\put(-3.33,13.00){\vector(2,-3){5.33}}
\bezier{64}(-3.33,13.00)(1.67,7.00)(7.50,13.00)
\bezier{64}(7.67,13.00)(1.67,18.33)(-2.50,13.00)
\put(6.5,12.00){\vector(1,1){0.67}}
\put(-2.00,14.00){\vector(-1,-1){0.67}}
\put(13.00,17.00){\circle*{1.33}}
\put(13.00,17.00){\vector(0,-1){12.33}}
\put(13.00,17.00){\vector(-3,-2){5.33}}
\end{picture}
}}$
From \cite[Example 26]{cpp} we know that its weight \texttt{w\_3\_8} vanishes.
\end{example}

\begin{lemma}\label{LemmaWeyeOnGroundVanishes}
The weight of a Kontsevich graph on two ground vertices with an ``eye on ground'' vanishes.
\end{lemma}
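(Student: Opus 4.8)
The plan is to realize $w(\Gamma)$ as the Kontsevich configuration-space integral over $\overline{C}_{n,2}$ of the wedge of the harmonic angle $1$-forms $\mathrm{d}\phi_e$, and to exploit the special geometry created by the eye together with its two edges to the common ground vertex. Writing $p,q$ for the two aerial vertices of the $2$-cycle and $v\in\mathbb{R}$ for the common ground vertex, I would first record the two elementary identities for the harmonic propagator, namely $\phi(p,q)+\phi(q,p)=2\arg(p-q)$ and $\phi(p,q)-\phi(q,p)=2\arg(p-\bar q)$, so that the eye contributes the $2$-form $-2\,\mathrm{d}\arg(p-q)\wedge\mathrm{d}\arg(p-\bar q)$, while each edge $p\to v$ and $q\to v$ to the real point $v$ contributes $2\,\mathrm{d}\arg(p-v)$ and $2\,\mathrm{d}\arg(q-v)$, respectively. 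The crucial combinatorial observation is that $p$ and $q$ have out-degree exactly $2$ and both of their outgoing edges already land in $\{p,q,v\}$; hence the eye couples to the rest of $\Gamma$ only through edges \emph{incoming} to $p$ or $q$.

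This dichotomy organizes the proof. If no edge enters the eye from outside (which is automatic in the affine regime of this section, since the in-degree bound $\leqslant 1$ is already saturated at $p$ and $q$ by the $2$-cycle itself), then $\{p,q\}$ is a connected component of $\Gamma$ attached solely to the single ground vertex $v$, so $\Gamma=\Gamma_{\mathrm{eye}}\,\bar\times\,\Gamma_{\mathrm{rest}}$. By the multiplicativity of weights (the composite-graph lemma above) it suffices to show $w(\Gamma_{\mathrm{eye}})=0$. But in the gauge $v=0$ the four forms above are invariant under the dilation $z\mapsto\lambda z$ fixing $v$, whereas the second sink carries no edge and thus cannot break this scaling; the scaling orbit has infinite length, so finiteness of the Kontsevich weight forces the single-sink factor $w(\Gamma_{\mathrm{eye}})$ to vanish. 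This already disposes of every affine eye-on-ground graph.

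The remaining, connected case --\,exemplified by the graph of Example~\ref{ExEyeOnGround} with weight \texttt{w\_3\_8}\,-- is where the real work lies, and it is the main obstacle. Here the eye is anchored to the rest of $\Gamma$ by incoming edges, the integral converges, and the natural symmetry argument breaks down: the involution $\sigma$ that reflects $z_p,z_q$ across the vertical geodesic through $v$ and interchanges $p\rightleftarrows q$ is orientation-preserving and, by the identities above, sends each of the two eye forms and each of the two ground edges to minus its partner, so that their wedge is preserved\,--\,yet $\sigma$ does \emph{not} preserve the $1$-forms carried by the incoming edges, since only one endpoint of each such edge is reflected. My plan is therefore to integrate out the two eye positions first\,--\,legitimate precisely because their outgoing edges stay inside $\{p,q,v\}$\,--\,obtaining an effective kernel on the positions of the in-neighbours and of $v$, and then to prove that this inner fibre integral vanishes identically; granting that, the full weight is zero. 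Controlling this inner integral is the hard part: it is exactly the content verified by direct computation for \texttt{w\_3\_8} in Example~\ref{ExEyeOnGround}, and I would reduce the general connected case to it by normalizing $v$ and invoking the relabeling invariance of weights, with the cyclic weight relations and the flip rule serving as independent cross-checks.
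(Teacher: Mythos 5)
Your proposal has a genuine gap precisely where you acknowledge ``the real work lies'': the connected case is not proved. You reduce it to the claim that the fibre integral over the two eye positions vanishes identically, but you offer no argument for this beyond the observation that it has been checked numerically for the single graph \texttt{w\_3\_8} of Example~\ref{ExEyeOnGround}. One verified example cannot be promoted to the general connected case by ``normalizing $v$ and invoking relabeling invariance'' --- relabeling invariance permutes vertex labels of one fixed graph, it does not transport a vanishing statement from one graph to another with different in-neighbours of the eye. Since the paper's own Remark notes that \emph{affine} connected graphs cannot contain an eye-on-ground at all, the connected non-affine case is the entire content of the lemma, and it is exactly the case your argument leaves open. (Your disconnected case is essentially subsumed by the separate lemma on disconnected graphs and is not the point.)

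The missing idea is a purely local dimension count on the eye subgraph, which is how the paper argues and which handles all cases uniformly, with or without incoming edges. The eye subgraph consists of the two aerial vertices $p,q$, the common sink $v$, and \emph{four} outgoing edges ($p\to q$, $q\to p$, $p\to v$, $q\to v$), hence four angle $1$-forms $\mathrm{d}\phi_e$ appearing as a factor of the full integrand. These four angles are functions only of the positions of $p$, $q$ and $v$; after using the affine group to put $v=0$ and to fix the overall scale (e.g.\ placing one aerial vertex at height $y=1$), only three real parameters remain on which they depend. Four $1$-forms pulled back from a three-dimensional parameter space wedge to zero, so the integrand of the \emph{entire} graph weight vanishes identically --- no integration over the remaining vertices, no symmetry of the incoming edges, and no case distinction is needed. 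This is both simpler and strictly stronger than what your route would deliver even if the fibre-integral claim were established, since it shows the integrand itself is zero rather than merely the integral.
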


For example, we directly verify this for all Kontsevich graphs on $n \leqslant 6$ aerial vertices by using Panzer's \textsf{kontsevint} software.

\begin{proof}
This is readily seen from the count of dimensions.
Consider the ``eye-on-ground'' subgraph with two aerial vertices, one sink, and four edges.
The number of angles between an outgoing edge and $i\infty$ equals four, with as many differentials of these four angles.
By using the affine group, move the sink to $0$ on the real line.
The two aerial vertices remain inside the hyperbolic upper half\/-\/plane.
Rescale the entire picture such that (at least) one aerial vertex is at height $y=1$.%
\footnote{Or fix some of the remaining parameters otherwise, e.g., place an aerial vertex on a unit circle.}
The remaining parameters are: the $x$-coordinate of the $(y=1)$-vertex and Cartesian coordinates $(x,y)$ of the other vertex.
But the four angles for the outgoing edges within this subgraph, hence their differentials, are completely determined by the (differentials of the) three parameters which we have just counted.
The four-form expressed using three differentials vanishes, hence the integrand in Kontsevich's weight formula for the entire graph also vanishes, and so does the graph weight.
\end{proof}

\begin{lemma}\label{LemmaWdisconnectedVanishes}
The weight of a disconnected graph vanishes.
\end{lemma}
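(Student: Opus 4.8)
The plan is to prove the vanishing directly from Kontsevich's integral formula for $w(\Gamma)$, by showing that the integrand --\,a wedge of angle $1$-forms $d\phi_e$, one per edge\,-- already vanishes pointwise on the configuration space. The mechanism is the same dimension-versus-symmetry count used just above for the ``eye-on-ground'' lemma, now applied to an entire connected component rather than to a fixed subgraph.

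First I would isolate a convenient component. Since there are only two ground vertices (the sinks) and a disconnected $\Gamma$ has at least two connected components, a short pigeonhole argument produces a component $C$ containing at least one aerial vertex but \emph{at most one} sink: if two or more components carry aerial vertices, two sinks cannot supply all of them with a sink each, so some such component receives $\leqslant 1$; and if a single component carries all aerial vertices, disconnectedness forces a separate sink-component, again leaving the aerial component with $\leqslant 1$ sink. Next I would factor the integrand: because $C$ is a full connected component, all $2n_C$ out-edges of its $n_C$ aerial vertices stay inside $C$, so the total wedge splits as $\omega_C \wedge \omega'$ with $\omega_C = \bigwedge_{e \in C} d\phi_e$ of degree $2n_C$, depending only on the positions of $C$'s vertices. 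Those positions range over a space of dimension $N_C = 2n_C + m_C$ with $m_C \leqslant 1$, so $\omega_C$ has \emph{codimension} $m_C \leqslant 1$.

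The key step is to exhibit enough symmetry to kill $\omega_C$. Kontsevich's angles are invariant under the affine group $z \mapsto az + b$ with $a>0$; letting $\xi_T$ and $\xi_S$ be the vector fields generating horizontal translation and dilation of all of $C$'s vertices, affine invariance of each $\phi_e$ gives $\iota_{\xi_T}\omega_C = \iota_{\xi_S}\omega_C = 0$. These two fields are everywhere linearly independent, since the dilation has a vertical component coming from an aerial vertex at positive height, which the translation lacks. A form annihilated by contraction with two pointwise-independent vector fields has no components along two coordinate directions, hence can be nonzero only if its degree is at most $N_C - 2$; but $\deg \omega_C = 2n_C = N_C - m_C \geqslant N_C - 1 > N_C - 2$. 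Therefore $\omega_C \equiv 0$, so $\omega_C \wedge \omega' \equiv 0$ and $w(\Gamma) = 0$.

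The main obstacle is the borderline case $m_C = 1$, where $\omega_C$ is a codimension-one form rather than a top form: a single symmetry no longer suffices, and one genuinely needs \emph{both} independent generators (translation and dilation) to annihilate it. The case $m_C = 0$ is easier, as $\omega_C$ is then top-degree and translation-invariance alone forces it to vanish. The one point that must not be overlooked is that the chosen component really contains an aerial vertex --\,otherwise $C$ could be an isolated sink and the degree count would misfire\,-- which is exactly what the pigeonhole step secures and what guarantees that the dilation field is nonzero.
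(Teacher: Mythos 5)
Your proof is correct and rests on the same mechanism as the paper's: a detached component carries a residual affine symmetry, so the wedge of its angle differentials is degenerate by a dimension count and the Kontsevich integrand vanishes pointwise. The paper implements this by gauge-fixing the leftmost sink at the origin, rescaling, and observing that the translation coordinate $x_0$ of the other component never appears among the $2n$ angle differentials of the top-degree form, whereas you contract the un-gauge-fixed factor $\omega_C$ with both affine generators (translation and dilation) and compare degree $2n_C$ against dimension $2n_C+m_C$ with $m_C\leqslant 1$ --- a slightly cleaner packaging of the identical idea, with your pigeonhole selection of a component having at least one aerial vertex and at most one sink making uniform the cases the paper treats informally.
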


Note that such graphs can consist of two components, each standing on one sink, but also e.g., of one component standing on two sinks and a purely aerial component.%
\footnote{Purely aerial components of graphs in Kontsevich's calculus correspond to the vacuum parts of Feynman diagrams.}

\begin{proof}
Again, this is seen by the count of dimensions.
Using action by the affine group elements, bring the leftmost sink to the origin $0 \in \mathbb{R}$ and rescale the graph picture in the upper half-plane $\mathbb{H}^2$ such that the $y$-coordinate of some aerial vertex is any given positive number.
Now, in the other component(s) of the graph, i.e.\ in the subgraph(s) not connected with the sink in the origin, the angles between edges and $i\infty$ --\,hence their differentials\,-- stay invariant under local (to not hit a vertex of any other component) horizontal translations along $\mathbb{R} = \partial\mathbb{H}^2$.
The $x$-coordinate $x_0$ of a vertex in the movable component, parametrizing the entire component's location, can be taken as one of the coordinates on the space $C_{n,m}$ of vertex configurations.
But, as said, for all the edges in this detached subgraph, the differentials in the respective angles in Kontsevich's formula can only depend on the pairwise differences of the $x$-coordinates (and on the not yet fixed $y$-coordinates) of vertices; neither the constant $x_0$ nor its differential occurs in the integrand.
Consequently, the differentials of fewer parameters on the configuration space $C_{n,m}$ show up in the top-degree form with the differentials of all the edge angles.
By the pigeonhole principle, 
at least one differential of a parameter on $C_{n,m}$ is repeated twice, hence the integrand is identically zero, and so is the entire graph weight.
\end{proof}

\begin{rem}
Connected \emph{affine} Kontsevich graphs standing on both sinks cannot contain an ``eye-on-ground'', due to the in-degree bound $\leqslant 1$ for either aerial vertex in the $2$-cycle (see Example~\ref{ExEyeOnGround}).
The ``eye-on-ground`` subpattern essentially works for Kontsevich graphs which are not affine, e.g., for quadratic Poisson brackets (see Remark~\ref{RemLogSymplStar} below).
\end{rem}

\begin{lemma}\label{LemmaWintegrandVanishes}
The graph weight vanishes if the integrand vanishes in Kontsevich's integral formula.%
\footnote{More graphs can have weight equal to zero sporadically, that is, when the Kontsevich formula with a nonzero integrand integrates to zero for an agreed choice of the propagator (here, harmonic). Examples of such \emph{neutral} graphs are found in \cite[Figure~2 and Example~26]{cpp}.}
\end{lemma}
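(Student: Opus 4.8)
The plan is to obtain the statement directly from the definition of Kontsevich's weight as an integral, observing that this lemma simply records the general principle underlying the dimension counts used in the two preceding proofs. Recall that the unnormalized Kontsevich weight of a graph $\Gamma$ with $n$ aerial vertices and two sinks is
$$
W_\Gamma = \frac{1}{(2\pi)^{2n}} \int_{\overline{C}_{n,2}} \omega_\Gamma, \qquad \omega_\Gamma = \bigwedge_{e \in \Edge(\Gamma)} \Id\phi_e,
$$
where $\phi_e$ is the harmonic propagator angle of the edge $e$ and the number of edges equals $\dim\overline{C}_{n,2} = 2n$, so that $\omega_\Gamma$ is a top-degree form. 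By the normalization fixed above we have $w(\Gamma) = \bigl(\prod_{k=1}^n \#\operatorname{Star}(v_k)!\bigr)\, W_\Gamma$ with a nonzero integer factor, so $w(\Gamma)$ vanishes as soon as $W_\Gamma$ does.

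First I would note that if the integrand $\omega_\Gamma$ is identically zero on the open configuration space $C_{n,2}$, then it vanishes on the dense top-dimensional stratum carrying the integral, whence $W_\Gamma = 0$ by linearity of integration and therefore $w(\Gamma) = 0$. At the level of the integral this is the complete argument, and no further analysis is needed; this is precisely the point of the lemma, in contrast with the sporadic cancellations producing the \emph{neutral} graphs of the footnote, where $\omega_\Gamma \not\equiv 0$ yet the integral still vanishes.

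The one point worth spelling out is the criterion for $\omega_\Gamma \equiv 0$. Being a wedge of $2n$ one-forms on a manifold of dimension $2n$, the form $\omega_\Gamma$ vanishes identically exactly when the differentials $\{\Id\phi_e\}$ are everywhere linearly dependent, equivalently when the edge-angle functions factor through a map to a parameter space of dimension strictly below $2n$. This is exactly the pigeonhole argument already run above for the eye-on-ground subgraph and for disconnected graphs: in each case one exhibits fewer functionally independent configuration parameters than edges, forces a repeated differential, and concludes that the top form dies. Consequently there is no real obstacle in the statement itself --- it is an immediate consequence of linearity of the integral --- and the genuine work always resides in verifying the hypothesis for a given graph, which is the task of the surrounding lemmas.
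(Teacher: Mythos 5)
Your argument is correct and coincides with what the paper intends: the lemma is stated without proof there precisely because, as you observe, it is immediate from the definition of $w(\Gamma)$ as a nonzero combinatorial constant times the integral of $\omega_\Gamma$, so a vanishing integrand forces a vanishing weight by linearity of integration. Your additional remark that the real content always lies in verifying $\omega_\Gamma \equiv 0$ for a specific graph (via the dimension/pigeonhole counts of the preceding lemmas) matches the role this lemma plays in the paper.
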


The zero graphs, which can be affine, and the connected ``eye-on-ground'' graphs, which cannot be affine,
are natural examples for the integrand to vanish
(see footnote~\ref{FootDefZeroNeutral} on p.~\pageref{FootDefZeroNeutral} for relevant definitions).

\begin{rem}\label{RemLogSymplStar}
Kontsevich conjectures in~\cite[\S 1.30]{MKLefschetzLectures} that for the quadratic log-\/symplectic Poisson bracket $\{x,y\} = \tfrac{1}{2}xy$ on~$\mathbb{C}^2$, its ``quadratic'' star\/-\/product --\,with in\/-\/degrees~$\leqslant 2$ for all aerial vertices of Kontsevich graphs in~$\star$\,-- satisfies the identity $x \star y = \exp(\hbar) y \star x$ at all orders of expansion in~$\hbar$.
When one constructs (a quadratic or higher order) Kontsevich's star\/-\/product with harmonic propagators, there are two options.
The first option is to verify Kontsevich's conjecture as soon as every 
next order of expansion is reached in the quadratic star\/-\/product.
The other option is to \emph{postulate} this conjecture and thus constrain the Kontsevich graph weights by one more relation at each order in~$\hbar$, so that the rank of the linear algebraic system is higher, the system is solved faster, and there remain fewer master\/-\/parameters.
(But then, one must indicate that the Kontsevich graph weights were obtained modulo the unproven conjecture.)%
\footnote{To the best of our knowledge, E.~Panzer (2019) produced the seventh order formulas of~$x \star y$ and~$y \star x$ for the log-\/symplectic structure, that is, by using Kontsevich's graphs with aerial vertex in\/-\/degrees~$\leqslant 2$ and with either sink's in\/-\/degree~$\equiv 1$; up to~$\hbar^7$, Kontsevich's conjecture holds true.
It is clear that while knowing the quadratic star\/-\/product of two coordinate functions, one does not yet have the quadratic star\/-\/product of two arbitrary arguments, nor can one certify its associativity. Therefore, the seventh order expansion of Kontsevich's star\/-\/product for the log-\/symplectic Poisson bracket in particular and for arbitrary quadratic Poisson structures remains an open problem.}
\end{rem}

Finally, the postulate of associativity for the star\/-\/product puts restrictions on the graph weights appearing in~it.

\begin{method}
\label{Method2D}
The Kontsevich affine star\/-\/product $\star_{\text{aff}}$ is associative for all Poisson structures $P = (\alpha x + \beta y + \gamma)\,\partial_x \wedge \partial_y$ on $\mathbb{R}^2$ with coordinates $x,y$.
In particular, the differential operator $A(f,g,h) = (f \star g) \star h - f \star (g \star h) \text{ mod }\bar{o}(\hbar^7)$ acting on $f \otimes g \otimes h \in C^\infty(\mathbb{R}^2)^{\otimes 3}$ has coefficients which vanish as polynomials in $x,y,\alpha,\beta,\gamma$.
\end{method}

In practice, Method~\ref{Method2D} allowed us to determine two graph weights at $\hbar^7$ in $\star_{\text{aff}}$ after the application of all the aforementioned other methods (when the rank of the linear algebraic system was already high).
Method~\ref{Method2D} is not so powerful but fast.

\begin{method}
\label{Method3D}
The Kontsevich affine star\/-\/product $\star_{\text{aff}}$ is associative for all Nambu--Poisson structures $P = \schouten{\varrho\,\partial_x \wedge \partial_y \wedge \partial_z, a}$ on $\mathbb{R}^3$, where $(\varrho, a)$ are polynomials of degrees $\deg(\varrho, a) = (1,1)$ or $(0, 2)$.
In particular, the differential operator $A(f,g,h) = (f \star g) \star h - f \star (g \star h) \text{ mod }\bar{o}(\hbar^7)$ acting on $f \otimes g \otimes h \in C^\infty(\mathbb{R}^3)^{\otimes 3}$ has coefficients which vanish as polynomials in $x,y,z$ and in the coefficients of polynomials $\varrho$ and $a$.
\end{method}

In contrast with Method~\ref{Method2D}, Method~\ref{Method3D} usually increases the rank a lot yet it is much slower.

\begin{rem}
When constructing star\/-\/products for polynomial Poisson brackets of a given (prescribed) degree, we can restrict the star\/-\/product ansatz and its associator to natural classes of polynomial Poisson brackets of that particular degree: e.g., by using the Nambu\/-\/determinant Poisson structures on~$\mathbb{R}^3$ with suitable degree polynomials $\varrho$,~$a$.
Likewise, when constructing the full star\/-\/product for generic Poisson brackets, one can increase the rank of the linear algebraic system upon the graph weights by using Method~3 from \cite{cpp}, that is, by restricting the $\star$-\/product ansatz and its associator to generic Nambu\/-\/determinant Poisson structures on~$\mathbb{R}^3$ and then dealing with differential polynomials in~$\varrho$ and~$a$.
\end{rem}

The master\/-\/parameters that remain 
are now calculated directly by using Panzer's program \textsf{kontsevint} for computation of Kontsevich's weights of graphs (see \cite{BPP}).%
\footnote{The paper \cite{BPP} contained a typo in the value of the weight for the affine graph on $n=7$ aerial vertices which Felder and Willwacher considered in \cite{FelderWillwacher} in the context of Riemann zeta. We detect that the correct weight of that graph is equal to $\frac{13}{2903040} - \frac{1}{256}\zeta(3)^2/\pi^6$; see the erratum to \cite{BPP}.\label{FootWeightCorrection}}

Independently, the graph weights can be calculated directly by applying the contour integral technique to the Kontsevich formula, see \cite[Appendix A.1]{cpp}; this method becomes increasingly difficult for Kontsevich graphs on many vertices; some intermediate steps are computer-assisted: integrals are evaluated symbolically or numerically.


So far, we summarized specific methods to obtain 
the values of weights for non-specific Kontsevich's graphs in the (affine, etc.) star\/-\/product ansatz.
Still, for the Bernoulli and loop graphs showing up in the affine star\/-\/product for linear or affine Poisson brackets, Ben Amar developed a technique for calculation of their exact weight values (see~\cite
{BenAmar2003} and Discussion in this paper).
The weights for these two classes of Kontsevich graphs serve a base in the inductive calculation of the affine star\/-\/product of two arbitrary polynomials.




\begin{rem}
The Formality theorem in~\cite{MK97} (see also~\cite{Ascona96}) guarantees that the (affine or full) star\/-\/product is associative because its associator can be realized as the Kontsevich graph expansion for a sum of Leibniz graphs, each with the Jacobiator in a trident vertex (cf.\ footnote~\ref{FootInDegBoundInLeibniz} on p.~\pageref{FootInDegBoundInLeibniz}).
The weights of Leibniz graphs in the authentic factorization (which the Formality theorem suggests) themselves are determined by the Kontsevich integral formula,
and they too satisfy the cyclic weight relations, now adapted to the Leibniz graphs on $3$ sinks.
These canonical Leibniz graph weight values and cyclic weight relations upon them can in hindsight constrain the coefficients of Kontsevich's graphs in the (affine) star\/-\/product ansatz.
We say that the above would be \emph{the strong solution} to the associator factorization problem.
However, 
it suffices for us to certify the associativity of $\star_{\text{aff}}\text{ mod }\bar{o}(\hbar^7)$ by realizing each tri-differential order part 
of its associator as \emph{a} sum of Leibniz graphs, i.e.\ regardless of their Kontsevich's weights and of their constraint by the cyclic weight relations.
In this way, we report in~\cite{factor23} a solution of the \emph{weak} factorization problem for the verification of affine star\/-\/product associativity. 
\end{rem}

\subsection{The expansion $\star_{\text{aff}}\text{ mod }\bar{o}(\hbar^7)$ of Kontsevich's star\/-\/product for affine Poisson brackets: Riemann $\zeta(3)^2/\pi^6$ still present}

For the class of Poisson brackets with \emph{affine} coefficients (whose higher derivatives vanish identically), e.g., the Kirillov\/--\/Kostant linear brackets, we advance to the seventh order in the expansion of the Kontsevich $\star$-product.

Indeed, the restriction of Kontsevich's $\star$-product to the spaces of affine Poisson brackets is well defined: all the Kontsevich graphs in $\star_{\text{aff}}$ mod $\bar{o}(\hbar^n)$ and in its associator up to $\bar{o}(\hbar^n)$ only have aerial vertices with in-degree $\leqslant 1$.
The linear algebraic system of the Shoikhet--Felder--Willwacher cyclic weight relations is by construction triangular with respect to the weights of Kontsevich graphs (in $\star$) with an overall bound for the in-degrees of aerial vertices.
(The linear system of cyclic weight relations is also triangular with respect to the in-degrees of aerial vertices in the Leibniz graphs which can be used to express the associator via differential consequences of the Jacobi identity.)

\begin{proposition}
\label{PropStarAffine}
The weighted graph encoding for the analytic formula of Kontsevich's affine star\/-\/product $\star_{\textup{aff}}$ \textup{mod} $\bar{o}(\hbar^7)$ --\,in particular, for all the Kirillov\/--\/Kostant Poisson brackets, linear on the duals~$\mathfrak{g}^*$ of finite\/-\/dimensional Lie algebras\,-- is given in Appendix~\ref{AppStarAffineEncoding}.
There are $1423$ nonzero Kontsevich weights of affine Kontsevich graphs in $\star_{\textup{aff}}$ \textup{mod} $\bar{o}(\hbar^7)$ near the graphs 
at all orders $\leqslant 7$ overall.
The multiple zeta value $\zeta(3)^2/\pi^6$ starts appearing in the weights at $n \geqslant 6$ vertices.%
\footnote{The Kontsevich weight of the Felder\/--\/Willwacher affine graph from~\cite{FelderWillwacher} 
equals $\frac{13}{2903040} - \frac{1}{256}\zeta(3)^2/\pi^6$, thus now correcting a typo in the \textup{\textsf{kontsevint}} program description~\cite{BPP} by Banks--Panzer--Pym.}
\end{proposition}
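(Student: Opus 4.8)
The plan is to reconstruct the harmonic graph weights of $\star_{\textup{aff}}$ order by order from the linear constraints collected above, to pin down the few surviving master-parameters by direct integration, and then to confirm the outcome through several mutually independent checks; the two numerical assertions follow by inspection of the resulting table of weights. First I would generate in \textsf{gcaops} the full list of Kontsevich graphs on $n$ aerial vertices and two sinks for $n=1,\ldots,7$ and filter it by the defining bound of the affine setting, in-degree $\leqslant 1$ at every aerial vertex. By Table~\ref{TabKontsevich} this leaves $1,4,14,51,161,542,1723$ nonzero affine graphs at the successive orders, of which $1,3,8,23,59,171,477$ are prime. In this finite basis the sought product is an $\hbar$-graded combination $f\star g = fg+\sum_{n\geqslant1}\hbar^n\sum_\Gamma w(\Gamma)\,B_\Gamma(f,g)$ whose weights $w(\Gamma)$ are the unknowns; these will turn out to lie in the $\mathbb{Q}$-span of $1$ and $\zeta(3)^2/\pi^6$.

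Next I would assemble the linear system on the $w(\Gamma)$ from every constraint proved above. The antisymmetry under the $L\rightleftarrows R$ swap kills each zero graph; the disconnected-graph and vanishing-integrand lemmas annihilate the corresponding weights outright; the multiplicativity lemma expresses each composite weight $w(\Gamma_1\bar{\times}\Gamma_2)$ through lower-order data already solved; the flip lemma imposes the $(-1)^n$ parity between a graph and its sink-interchange; and the Shoikhet--Felder--Willwacher cyclic weight relations, which by the proof given above form a triangular system filtered by aerial in-degree and therefore restrict consistently to the affine unknowns, supply the bulk of the equations (the full relation count runs $1,4,30,331,4907,91694,2053511$ across orders $1,\ldots,7$, cf.\ Table~\ref{TabKontsevich}). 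Solving this $\mathbb{Q}$-linear system leaves only a low-dimensional space of master-parameters at each order, which I would then cut down further by the associativity constraints: restricting the order-$7$ associator to the generic affine bracket on $\mathbb{R}^2$ (Method~\ref{Method2D}) and to the Nambu--determinant brackets on $\mathbb{R}^3$ of degrees $(1,1)$ and $(0,2)$ (Method~\ref{Method3D}) yields extra equations in the still-free order-$7$ weights, because, with all lower orders fixed, the order-$7$ associator depends linearly on those weights up to a known inhomogeneous term.

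The master-parameters that survive I would evaluate directly from Kontsevich's integral with harmonic propagators using Panzer's \textsf{kontsevint}. This is the step at which the single irrational constant $\zeta(3)^2/\pi^6$ enters, first at $n=6$ and again at $n=7$, most visibly in the weight $\tfrac{13}{2903040}-\tfrac{1}{256}\,\zeta(3)^2/\pi^6$ of the Felder--Willwacher graph. With every weight now known, I would run three independent verifications: that all the weights satisfy the complete cyclic weight relations; that each agrees with a direct \textsf{kontsevint} evaluation, equivalently with the contour-integral method of \cite[Appendix A.1]{cpp}; and that the assembled bi-differential operator is associative modulo $\bar{o}(\hbar^7)$ on the test families of Methods~\ref{Method2D} and~\ref{Method3D}. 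The two counting claims then reduce to reading off the solved table: exactly $1423$ of the weights are nonzero across all orders $\leqslant 7$, and $\zeta(3)^2/\pi^6$ is absent from every weight at $n\leqslant5$ while present at $n=6,7$.

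The main obstacle I anticipate is entirely one of scale at order $7$. The cyclic-relation system there carries on the order of $2\times10^{6}$ equations, so the elimination must exploit the triangular filtration by in-degree to stay tractable, and the associator test must be performed insertion-by-insertion while filtering intermediate graphs by aerial in-degree $\leqslant1$, lest one generate the full, exponentially larger Kontsevich graph set. The genuinely delicate arithmetic core is the evaluation of the order-$6$ and order-$7$ period integrals that produce $\zeta(3)^2/\pi^6$; reconciling them forced a correction of the published value of the Felder--Willwacher weight (see the erratum to \cite{BPP}). Confidence in the final formula therefore rests on the exact agreement of the three independent checks above.
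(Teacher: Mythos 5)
Your proposal follows essentially the same route as the paper's proof scheme: assemble the linear system on the affine graph weights from the elementary symmetries, vanishing lemmas, multiplicativity, and the cyclic weight relations, cut the remaining freedom by restricting the associator to concrete affine Poisson brackets, evaluate the surviving master parameters with \textsf{kontsevint}, and read off the two counts from the resulting table. The only cosmetic difference is that the paper's proof of this proposition invokes only the restriction to the generic affine bracket on $\mathbb{R}^2$ (which lowers the corank from $76$ to $74$), whereas you also bring in the Nambu--determinant brackets on $\mathbb{R}^3$; this does not change the argument.
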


\begin{proof}[Proof scheme]
The ansatz for $\star_{\textup{aff}}$ \textup{mod} $\bar{o}(\hbar^n)$ contains, at $\hbar^7$, $1731$ affine Kontsevich graphs with in-degree $\leqslant 1$ of aerial vertices; their Kontsevich weights are constrained by the elementary properties \textup{(}such as mirror reflections, whence \emph{basic} graphs\textup{)}, by the weights multiplicativity \textup{(}whence the \emph{prime} graphs\textup{)}, by the vanishing statements for the Kontsevich graphs which are disconnected over the sinks and for the Kontsevich graphs which contain a triangle subgraph standing on a sink, and by the cyclic weight relations: the corank of the merged linear algebraic system upon the $1731$ unknowns equals~$76$.
The values of the $76$ master parameters\footnote{\label{FootRestrictTo2Daffine}%
The restriction of the associator for $\star_{\textup{aff}}$ \textup{mod} $\bar{o}(\hbar^7)$ to a generic affine Poisson bracket $P = (ax + by + c)\partial_x \wedge \partial_y$ on $\mathbb{R}^2$ decreases the corank down to $74$.
This gain --\,by only two parameters\,-- was only discovered \emph{after} the 76 master parameters were known.
The second referee inquires whether using higher dimensional linear and affine Poisson brackets on~$\BBR^{\geqslant 3}$ would produce more constraints --- compared with just two new equations from the underlying~$\BBR^2$. We learn 
that at a given order of~$\hbar$ (here $k=7$), by increasing the dimension of the underlying Poisson manifold~$\BBR^d$ one can increase the rank (beyond what one has from~$\BBR^2$) but this improvement, as $d$~grows, soon saturates because new constraints upon the graph weights repeat each other.}
\textup{(}themselves the weights of certain affine Kontsevich graphs on $n=7$ aerial vertices in the affine star\/-\/product $\star_{\textup{aff}}$ \textup{mod} $\bar{o}(\hbar^7)$\textup{)} have been computed using the \textup{\textsf{kontsevint}} program by Banks\/--\/Panzer\/--\/Pym; these values are listed in \cite[Part I, \S 3.7, Cell 54]{BuringDisser}.
\end{proof}

\begin{rem}\label{RemUseKontsevintTooMuch}
The second referee argues that it would be better to not compute only precisely the corank number of weights, but more of them.
(In fact, this is 
what we did, in view of footnote~\ref{FootRestrictTo2Daffine}: the true corank was $74 < 76$.)
Every extra integral computed gives us a more overdetermined system of equations used.
The main counter\/-\/argument against doing that too much in practice is the time needed to run the excessive evaluations of non\/-\/master graph weights.

Our core strategy in~\cite{cpp} and in the present paper is to combine all of the known constraints upon the weights of Kontsevich graphs in the $\star$-\/product (and upon the Kontsevich weights of Leibniz graphs that express the associator) in order to increase the rank of the algebraic system and thus reduce the remaining number of `time\/-\/expensive' master parameters. By realizing this strategy, we experimentally detect how precisely much does the corank drop due to a combination of the sets of constraints of different nature.
\end{rem}

\begin{open}\label{OpenRelMZV}
It has been proven in~\cite{BPP} that all graphs' coefficients in Kontsevich's universal formula of the star\/-\/product with harmonic propagators are rational linear combinations of multiple zeta values (MZVs). The property of $\star$-\/product's associativity for every specific Poisson bracket and the postulate of $\star$-\/product's associativity for arbitrary Poisson brackets (so that the Formality theorem from~\cite{MK97} guarantees the expression of the associator in terms of the Leibniz graphs) constrain the MZVs at hand by linear relations over~$\BBQ$.
Which (families of) linear relations between the MZVs in~\cite{KanekoYamamotoRelMZV}, from which any other relation conjecturally follows, are then reproduced by the associativity of Kontsevich's $\star$-\/product\,?
\end{open}

\begin{open}
Which other sub-patterns, besides the ``eye-on-ground'' lemma, guarantee the vanishing of the integrand of the Kontsevich graph weight\,?
\end{open}

\section{The reduced affine star\/-\/product $\star_{\text{aff}}^{\text{red}}$ mod $\bar{o}(\hbar^7)$: why there is no Riemann zeta $\zeta(3)^2/\pi^6$ in the analytic formula}
\label{SecStarAffineReduced}
\noindent%
Apparent from Appendix~\ref{AppStarAffineFormula} with the analytic formula of $\star_{\text{aff}}\text{ mod }\bar{o}(\hbar^7)$ is the absence of $\zeta(3)^2/\pi^6$, which is nominally present in the graph expansion of Kontsevich's star\/-\/product with harmonic propagators (see Felder\/--\/Willwacher~\cite{FelderWillwacher}).

\begin{claim}
All the instances of $\zeta(3)^2/\pi^6$ disappear from $\star_{\textup{aff}}\textup{ mod }\bar{o}(\hbar^7)$ because the $\mathbb{Q}$-linear combinations of Kontsevich graphs near all $\zeta(3)^2/\pi^6$ --\,and moreover, many other $\mathbb{Q}$-linear combinations of Kontsevich graphs in $\star_{\textup{aff}}\textup{ mod }\bar{o}(\hbar^7)$\,-- assimilate into the $\mathbb{Q}$-linear combinations of \emph{Leibniz} graphs, so that the analytic formulae which these (or any) Leibniz graphs encode vanish identically due to the Jacobi identity for the affine Poisson bracket(s) at hand. 
\end{claim}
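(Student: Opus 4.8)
The plan is to turn the claim into a single finite linear-algebra problem over $\mathbb{Q}$, solved with \textsf{gcaops}, and to invoke the Jacobi identity only once, at the very end. First I would isolate the coefficient of $\zeta(3)^2/\pi^6$ in $\star_{\textup{aff}}\textup{ mod }\bar{o}(\hbar^7)$. Since this constant appears only at $n \geqslant 6$ aerial vertices, its coefficient is a single $\mathbb{Q}$-linear combination $\Gamma_\zeta$ of affine Kontsevich graphs at $\hbar^6$ and $\hbar^7$, read off directly from Appendix~\ref{AppStarAffineEncoding}. Writing $\star_{\textup{aff}} = \star_{\textup{rat}} + (\zeta(3)^2/\pi^6)\,\Gamma_\zeta$ with $\star_{\textup{rat}}$ carrying only rational weights, the claim reduces to showing that the bi-differential operator encoded by $\Gamma_\zeta$ vanishes on every affine Poisson bracket; then $\zeta(3)^2/\pi^6$ multiplies the zero operator and is absent from the analytic formula, irrespective of its (unknown) arithmetic nature.

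Next I would enumerate the admissible affine Leibniz graphs at the two relevant orders. Each carries one trident vertex for the Jacobiator $\tfrac12\schouten{\mathcal{P},\mathcal{P}}$ together with wedge tops for the remaining copies of $\mathcal{P}$, with aerial in-degrees $\leqslant 1$ at the wedge tops and $\leqslant 2$ at the trident, exactly as in footnote~\ref{FootInDegBoundInLeibniz}. Expanding the Jacobiator into its three Kontsevich graphs and distributing the incoming arrows over the three pairs of internal vertices by the Leibniz rule produces a $\mathbb{Q}$-linear expansion map $\Phi_{\textup{Leib}}$ from the Leibniz-graph space into the space of affine Kontsevich graphs; after expansion every aerial in-degree is again $\leqslant 1$, so the image lands in the very graph space that hosts $\Gamma_\zeta$.

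The decisive step is to solve $\Phi_{\textup{Leib}}(c) = \Gamma_\zeta$ for a rational vector $c$ \emph{as differential operators in a generic affine bivector} $\mathcal{P}$ --\,that is, with $\mathcal{P}$ treated as an arbitrary, not necessarily Poisson, affine bivector, so that $\schouten{\mathcal{P},\mathcal{P}}$ is a genuinely nonzero trivector. I would expand both sides into the jet-monomial basis of bi-differential operators (in the $1$-jet of $\mathcal{P}$, affineness killing all higher jets, and in the jets of the two arguments), modding out by the kernel of the graph-to-operator map --\,the zero and neutral graphs, integrations by parts, and the relabeling and mirror relations of the earlier lemmas\,-- so that distinct graphs encoding the same operator are identified. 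This yields a large but finite sparse $\mathbb{Q}$-system. Once a rational solution $c$ is exhibited, the operator identity $\Gamma_\zeta = \Phi_{\textup{Leib}}(c)$ holds for all affine bivectors; restricting to a genuine affine \emph{Poisson} bracket, where $\schouten{\mathcal{P},\mathcal{P}} = 0$, the right-hand side vanishes, hence so does $\Gamma_\zeta$. The \emph{moreover} clause follows by the same procedure applied to an enlarged target: one replaces $\Gamma_\zeta$ by $\Gamma_\zeta$ plus further rational-coefficient combinations of Kontsevich graphs and re-solves, retaining the maximal subspace lying in $\img\Phi_{\textup{Leib}}$.

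I expect the principal obstacle to be the operator-level matching of the decisive step, for two compounding reasons. First, one must not test the identity on $\mathbb{R}^2$, where every bivector is trivially Poisson and the Jacobiator vanishes identically, so that $\Phi_{\textup{Leib}}$ would collapse and $c$ could not be pinned down; the computation must be carried out with a generic affine bivector in dimension $\geqslant 3$ (or formally at the jet level). Second, the graph-to-operator map has a substantial kernel at $\hbar^7$, so a graph-by-graph equality is hopeless and the matching must be performed in the quotient, over $\mathbb{Q}$ and exactly, with no floating-point artifacts. Should $\Gamma_\zeta$ fail to lie in $\img\Phi_{\textup{Leib}}$ using only the $0$th layer of Leibniz graphs, the remedy parallels Proposition~\ref{PropStarAffineAssoc}: adjoin the $1$st layer of neighbour Leibniz graphs before re-solving.
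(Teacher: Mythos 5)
Your proposal is sound and rests on the same two pillars as the paper's argument: (i) a Leibniz graph, once its Jacobiator is expanded and the incoming arrows are spread by the Leibniz rule, lands back in the space of affine Kontsevich graphs, and (ii) any Kontsevich-graph combination lying in the image of this expansion map vanishes identically on every affine Poisson bracket, so subtracting it changes nothing in the analytic formula. Where you differ is in the organization of the linear algebra. You split $\star_{\text{aff}} = \star_{\text{rat}} + (\zeta(3)^2/\pi^6)\,\Gamma_\zeta$ and solve $\Phi_{\text{Leib}}(c) = \Gamma_\zeta$ exactly over $\mathbb{Q}$, treating the ``moreover'' clause as a second, separate maximization. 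The paper instead poses the single (deliberately unsolvable) system $\star_{\text{aff}} - L = 0 \bmod \bar{o}(\hbar^7)$ with undetermined $\mathbb{Q}[\zeta(3)^2/\pi^6]$-coefficients on \emph{all} Leibniz graphs on $\leqslant 6$ aerial vertices, solves it approximately with \texttt{solve\_right}, defines $\star_{\text{aff}}^{\text{red}} \coloneqq \star_{\text{aff}} - L_0$ as the residual, and only \emph{a posteriori} observes that the entire $\zeta(3)^2/\pi^6$-part (plus much of the rational part) has been absorbed. Your targeted exact solve makes the logical structure of the zeta-vanishing cleaner; the paper's single global solve delivers the zeta statement and the reduced star-product in one pass, and the observation that the absorption happens already with the $0$th-layer-type graphs (no layer iteration is needed for the reduction, only for the three-sink associator factorization) comes for free.

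One genuinely useful correction to your ``principal obstacle'': the matching $\Phi_{\text{Leib}}(c) = \Gamma_\zeta$ can, and in the paper does, take place in the free $\mathbb{Q}$-vector space spanned by normal forms of Kontsevich graphs, with no bivector ever evaluated. Equality at the graph level is a sufficient condition for equality of the encoded operators, so the two complications you flag --- the degeneration on $\mathbb{R}^2$ where every bivector is Poisson, and the need to quotient by the kernel of the graph-to-operator map --- are self-inflicted by your choice to match at the operator level. Passing to the quotient would only enlarge the solvable set, and since the graph-level system already admits a rational solution, that extra generality is never needed; keeping the computation in the graph space also removes any dimension-dependence from the certificate.
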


Let us explain how the much-reduced graph encoding $\star_{\text{aff}}^{\text{red}}\text{ mod }\bar{o}(\hbar^7)$ is obtained (see Appendix~\ref{AppStarAffineReducedEncoding}); we emphasize that for every given affine Poisson bracket, the analytic formula encoded by the graph expansion of $\star_{\text{aff}}^{\text{red}}\text{ mod }\bar{o}(\hbar^7)$ is exactly the same as 
the formula which was produced from the original graph expansion $\star_{\text{aff}}\text{ mod }\bar{o}(\hbar^7)$, see Appendix~\ref{AppStarAffineOriginalEncoding} for the affine star\/-\/product.

\begin{proof}[Proof scheme]
To eliminate as much as possible from the affine star\/-\/product we pose an \emph{unsolvable} problem: let us try to represent the entire graph encoding of the  $\star_{\text{aff}}\text{ mod }\bar{o}(\hbar^7)$ as a linear combination of Leibniz graphs.
(The impossible existence of a solution to this problem would mean that the affine star\/-\/product is identically zero up to order 7 for any affine Poisson bracket!)
Having generated all the relevant Leibniz graphs on 2 sinks and with up to $6$ aerial vertices (of which one vertex has out-degree three and the others (if any) two), and by taking a sum $L$ of these Leibniz graphs with undetermined $\mathbb{Q}[\zeta(3)^2/\pi^6]$-coefficients, we construct the linear algebraic system \[\star_{\text{aff}} - L = 0\text{ mod }\bar{o}(\hbar^7).\]
This linear system is now solved \emph{approximately} by using the method \texttt{solve\_right} from \textsf{SageMath}.\footnote{It can be expected that the method runs an iterative minimization of the quadratic potential $\langle \boldsymbol{x}, A\boldsymbol{x} \rangle$ obtained from a linear system $A\boldsymbol{x}=0$. The method stops at the potential's bottom $x_0$ of the values of Leibniz graph coefficients. The computation is run over the extension $\mathbb{Q}[\zeta(3)^2/\pi^6]$ of the field of rationals.}


The key idea is that by adding or subtracting any linear combination of Leibniz graphs, we do not alter the analytic formula which is encoded by the Kontsevich graph expansion of the star\/-\/product.
By definition, we put $\star_{\text{aff}}^{\text{red}} \mathrel{{:}{=}} \star_{\text{aff}} - L_0(x_0)$, where we subtract the found solution $L_0$ with the Leibniz graph coefficients $x_0$ in it.
We establish that the entire $\mathbb{Q}$-linear combination of Kontsevich's graphs near $\zeta(3)^2/\pi^6$ in the original graph expansion $\star_{\text{aff}}\text{ mod }\bar{o}(\hbar^7)$ is absorbed 
into the sum $L_0$ of Leibniz graphs.
Moreover, not only the $\zeta(3)^2/\pi^6$-part of the affine star\/-\/product but also much of its part with rational coefficients is absorbed further into the Leibniz graphs combination over $\mathbb{Q}[\zeta(3)^2/\pi^6]$.
In the reduced affine star\/-\/product $\star_{\text{aff}}^{\text{red}}\text{ mod }\bar{o}(\hbar^7)$, there remain only $326$ nonzero rational coefficients of Kontsevich graphs (at all orders, up to $\hbar^7$): this reduced graph encoding is listed in Appendix~\ref{AppStarAffineReducedEncoding}.
We remember 
that for any given affine Poisson bracket, the analytic formula of the affine star\/-\/product seventh order expansion produced from the originally found graph expansion $\star_{\text{aff}}\text{ mod }\bar{o}(\hbar^7)$ is now exactly the same as the analytic formula obtained 
from the reduced expansion $\star_{\text{aff}}^{\text{red}}\text{ mod }\bar{o}(\hbar^7)$; 
the formula is given in Appendix~\ref{AppStarAffineFormula}.
By a straightforward calculation, we verify that the difference of two analytic formulas amounts to a linear differential operator acting on the Jacobiator of the affine Poisson structure.
\end{proof}


\begin{rem}
\label{RemNoIrrationalContradiction}
For linear star\/-\/products Ben Amar~\cite
{BenAmar2003} predicted that all the coefficients of Kontsevich's graphs in the expansion are rational numbers.
On the other hand, the work of Felder\/--\/Willwacher~\cite{FelderWillwacher} suggests --\,already on the level of affine Poisson brackets\,-- and the work of Banks\/--\/Panzer\/--\/Pym~\cite{BPP} confirms 
that the Kontsevich graph coefficients in a linear star\/-\/product take values in the extensions of~$\mathbb{Q}$ by suitable multiple zeta values.
Our finding that affine star\/-\/product can be reduced by using Leibniz graphs brings together both statements: there is no contradiction between them. 
\end{rem}

\begin{open}Why does the entire $\zeta(3)^2/\pi^6$-part of $\star_{\text{aff}}\text{ mod }\bar{o}(\hbar^7)$ assimilate into expressions which vanish due to the Jacobi identity\,?
What is the mechanism of this effect within the proof of Formality theorem?
Likewise, what is the ``rational'' part of the (affine) star\/-\/product which does not contribute to the analytic formula in the same way\,?
Which multiple 
zeta values will not ever show up in the resulting analytic formula of the affine star\/-\/product\,?
\end{open}

\section{Conclusion}
\noindent%
We have studied the following questions:\\[0.5pt]
\mbox{ }$\bullet$\quad  
   What is, and what can be the formula of Kontsevich's star\/-\/product up to order~$7$ for affine --\,in particular, for linear on $\mathfrak{g}^*$\,-- Poisson brackets\,?\\
\mbox{ }$\bullet$\quad
   Where are the Riemann zeta values --- are they really present in the coefficients of Kontsevich's affine star\/-\/product\,?\\
\mbox{ }$\bullet$\quad  
   How does the associativity mechanism of Kontsevich's star\/-\/product work for generic or affine Poisson structures: namely, at order~$7$ of the expansion, is it the same or different from such mechanism at lower orders\,?

\smallskip
Now, this paper contains a reduced, 
ready-to-use formula of seventh order expansion for Kontsevich's star\/-\/product built for Poisson brackets with linear or affine coefficients (see Appendix~\ref{AppStarAffineFormula}). We certify in~\cite{factor23}
that this reduced affine star\/-\/product expansion $\star_{\text{aff}}^{\text{red}}\text{ mod }\bar{o}(\hbar^7)$ is associative up to $\bar{o}(\hbar^7)$.
For every affine Poisson bracket, our formula expresses Kontsevich's original star\/-\/product~\cite{MK97} with harmonic propagators, in its authentic gauge.

\begin{open}
Make the reduced affine star\/-\/product formula shorter (i.e.\ smaller in size) by using gauge transformations.
\end{open}

\section{Discussion}
\label{SecDiscussion}


\noindent%
By combining various 
methods to constrain the weights of Kontsevich's graphs in a star\/-\/product,
we make every such method work towards the verification of other methods.
This not only reduces a risk of our error, but also allows us to spot errors in the earlier published work.
For instance, we detect that the true value of the Kontsevich weight for the Felder--Willwacher graph \cite{FelderWillwacher} is $w(\Gamma) = \frac{13}{2903040} - \frac{1}{256}\zeta(3)^2/\pi^6$;
this graph shows up at $\hbar^7$ in the affine star\/-\/product, and an expression with a sign mismatch was reported in \cite{BPP}.
(The \textsf{kontsevint} software by Panzer now confirms the true value, which we obtain.) 

Likewise, we recall that for linear or affine Poisson brackets, the respective $\star$-product can be built iteratively, by starting with two explicit formulas for the weights of Bernoulli and loop graphs, 
using the method from~\cite
{BenAmar2003} by Ben Amar.

\begin{define}\label{DefBernoulliLoopGraph}
The \emph{Bernoulli graph} $\Gamma_n$ on two sinks $0,1$ and $n$ aerial vertices $2,\ldots,n+1$ is a Kontsevich directed graph built of wedges: $(2,0), (2,1)$ and $(k+3, k+2), (k+3, 1)$ for $0 \leqslant k < n-1$.\\[0.5pt]
\mbox{ }$\bullet$\quad
The \emph{loop graph} $\Gamma_n'$ on two sinks $0,1$ and $n > 1$ aerial vertices $2,\ldots,n+1$ is a Kontsevich directed graph built of wedges: $(2,0),(2,1+n)$ and $(k+3,k+2), (k+3,1)$ for $0\leqslant k < n-1$.
\end{define}

Knowing the Kontsevich weights of Bernoulli and loop graphs one can calculate the star\/-\/product $x^i \star g$ of a coordinate function $x^i$ and an arbitrary 
second argument~$g$.

\begin{lemma}[{\cite[Proposition 4.4.1]{Kathotia}}, {\cite[Corollary 6.3]{BenAmar2007}}]
The Kontsevich weight of Bernoulli graph is $w(\Gamma_n) = B_n / (n!)^2$ and the weight of loop graph is $w(\Gamma_n') = \tfrac{1}{2} B_n / (n!)^2$, where $B_n$ is the $n$th Bernoulli number and the convention in Ben Amar's work~\cite{BenAmar2007} must be $B_1 = +1/2$.
\end{lemma}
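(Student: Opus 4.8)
The plan is to evaluate the two weights directly from Kontsevich's integral formula, exploiting the almost one\/-\/dimensional (``chain'') topology of both graphs. Writing the genuine Kontsevich weight as
\[
W_\Gamma = \frac{1}{(2\pi)^{2n}} \int_{C_{n,2}} \bigwedge_{e} \mathrm{d}\varphi_e,
\]
where $\varphi_e$ is the harmonic angle of the edge $e$ and $C_{n,2}$ is the configuration space of $n$ aerial points in $\mathbb{H}^2$ and $2$ ground points on $\partial\mathbb{H}^2 = \mathbb{R}$, I would first use the affine symmetry to pin the two sinks (say to $0$ and $i\infty$). In the Bernoulli graph $\Gamma_n$ the aerial vertices form a directed path $v_{n+1} \to v_n \to \cdots \to v_2$ in which every $v_j$ also emits one edge to the sink $1$, while the bottom vertex $v_2$ sends its second edge to the sink $0$. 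This is exactly the structure that permits an iterated (``transfer matrix'') integration: integrate out the top vertex $v_{n+1}$ first, then $v_n$, and so on down the chain, each integration involving only the position of the next vertex down together with one sink.

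The key computation is to show that integrating one vertex of the chain against the harmonic propagator produces a single\/-\/variable kernel whose iterates assemble into the exponential generating function $z/(\mathrm{e}^z - 1)$ of the Bernoulli numbers (up to the $B_1$ sign convention). Concretely, I would introduce the partial integrals $I_n$ obtained after integrating the top $n$ vertices and prove, by evaluating the elementary angle integral at each step, that their generating series satisfies the functional equation characterising $z/(\mathrm{e}^z-1)$, equivalently that the $I_n$ obey the recursion that defines the Bernoulli numbers. Since the paper's normalization sets $w(\Gamma) = \bigl(\prod_k \#\operatorname{Star}(v_k)!\bigr)\,W_\Gamma$ and every aerial vertex of $\Gamma_n$ has out\/-\/degree $2$, this multiplies $W_{\Gamma_n}$ by $2^n$; tracking the factorials coming from this convention and from the generating function then yields $w(\Gamma_n) = B_n/(n!)^2$, with the sign convention forced to $B_1 = +\tfrac12$ by the orientation of the wedge $(2,1)$.

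For the loop graph $\Gamma_n'$ the only change is that the bottom vertex $v_2$ redirects its second edge from the sink $1$ to the top vertex $v_{n+1}$, closing the path into a directed cycle (a ``wheel'' all of whose remaining spokes land on the sink $1$). I would run the same iterated integration, the sole difference being that the last integration runs over a closed loop rather than an edge terminating at a sink; this replaces one sink\/-\/angle form by the angle form along the cycle. I expect this closed\/-\/loop integration to contribute exactly an overall factor $\tfrac12$ relative to the Bernoulli case, which I would confirm from the explicit one\/-\/vertex integral, giving $w(\Gamma_n') = \tfrac12\, w(\Gamma_n) = \tfrac12 B_n/(n!)^2$.

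The hard part will be the explicit evaluation of the iterated angle integrals and the identification of their generating function with $z/(\mathrm{e}^z - 1)$: this is where all the analytic content sits, and it must be carried out for the \emph{harmonic} propagator rather than Kontsevich's original hyperbolic one, so one has to verify that these particular chain\/ and wheel weights are insensitive to that choice (or recompute them from scratch). A secondary but error\/-\/prone point is the bookkeeping of the two factorials and the $2^n$ factor distinguishing the paper's normalization $w$ from the bare integral $W$, together with the orientation conventions that fix $B_1 = +\tfrac12$; cross\/-\/checking the first few values of $n$ against a direct \textsf{kontsevint} computation, as is done elsewhere in this paper, is the natural safeguard against such slips.
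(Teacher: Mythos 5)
Your plan is, in outline, the proof that already exists in the literature --- Kathotia's iterated (transfer\/-\/operator) integration down the chain, producing the generating function $z/(\mathrm{e}^z-1)$ of the Bernoulli numbers --- and it is worth saying up front that the paper itself does \emph{not} prove this lemma by that or any other analytic route. The statement is imported from Kathotia (Proposition~4.4.1) and Ben Amar (Corollary~6.3), and what the paper actually supplies is an independent \emph{verification}: the weights $w(\Gamma_n)$ for $1\leqslant n\leqslant 7$ and $w(\Gamma_n')$ for $2\leqslant n\leqslant 7$ are recomputed by the combination of methods of Section~1 (weight multiplicativity, cyclic weight relations, associativity constraints, \textsf{kontsevint}) and reported in the table of Bernoulli and loop graph weights, where they agree with $B_n/(n!)^2$ and $\tfrac12 B_n/(n!)^2$ exactly when one takes $B_1=+\tfrac12$; the sign convention singled out in the lemma is thus an \emph{output} of the cross\/-\/check, not an input. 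So your approach is genuinely different from the paper's: you aim at a derivation valid for all $n$, the paper at a certification for $n\leqslant 7$.

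As a derivation, however, your text is a programme rather than a proof. The two steps that carry all of the content --- the evaluation of the single\/-\/vertex angle integral together with the identification of the resulting recursion with the one defining $z/(\mathrm{e}^z-1)$, and the computation showing that closing the chain into a cycle costs exactly the factor $\tfrac12$ --- are announced but not performed; the $\tfrac12$ in particular is only ``expected''. Until those integrals are done nothing is proved, and the sign of $B_1$ (the one delicate point the lemma insists on) is precisely the kind of datum that the shape of the argument cannot fix by itself. Two smaller corrections: the affine group $z\mapsto az+b$ lets you pin the two sinks to $0$ and $1$ on $\partial\mathbb{H}^2$, not to $0$ and $i\infty$ (a ground vertex cannot be placed at the point at infinity); and in this paper ``harmonic propagator'' \emph{is} Kontsevich's original angle function from the 1997 formality paper (the terminology follows Banks--Panzer--Pym), so there is no separate hyperbolic\/-\/versus\/-\/harmonic consistency check to make.
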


In Table~\ref{TabAffineWeight} we report the newly found values of the Bernoulli and loop graph weights (for $1 \leqslant n \leqslant 7$ and $2 \leqslant n \leqslant 7$ respectively).
\begin{table}[htb]
\caption{The weights of Bernoulli and loop graphs.}\label{TabAffineWeight}
\begin{center}	
\begin{tabular}{l | l | l | l | l | l | l | l}
$n$ & 1 & 2 & 3 & 4 & 5 & 6 & 7 \\ \hline
$w(\Gamma_n) = B_n / (n!)^2$ & $1/2$ & $1/24$ & $0$ & $-1/17280$ & $0$ & $1/21772800$ & $0$ \\
$w(\Gamma_n') = \tfrac{1}{2} B_n / (n!)^2$ & none & $1/48$ & $0$ & $-1/34560$ & $0$ & $1/43545600$ & $0$
\end{tabular}
\end{center}
\end{table}


All the produced values are identically equal to Ben Amar's 
theoretical prediction, provided that $B_1 = +1/2$ (which is the opposite of the current \textsf{SageMath} convention $-1/2$).


\subsection*{Acknowledgements}
The authors thank the anonymous referees for important comments and criticisms.
The research of R.B.\ was 
supported by project~$5020$ at the Institute of Mathematics,
Johannes Gutenberg\/--\/Uni\-ver\-si\-t\"at Mainz\footnote{This paper is extracted in part from the Ph.D.\ dissertation of R.~Buring (\textit{summa cum laude}, 13~October 2022, JGU Mainz).
RB thanks the dissertation committee for evaluating the thesis.
} 
and by CRC-326 grant GAUS ``Geometry and Arithmetic of Uniformized Structures''.
The travel of A.
K.\ was partially supported by project 135110 at the Bernoulli Institute, University of Groningen.
RB and AVK are grateful to the organizers of workshop ``Lie groupoids, Poisson geometry, and differentiable stacks'' in Banff, Canada, 5--10 June, 2022.
RB and AVK are grateful to the organizers of \textsc{Group34} colloquium on group theoretical methods in Physics on 18--22 July 2022 in Strasbourg for a warm atmosphere during the meeting.
RB thanks Erik Panzer for helpful discussions, for an invitation to the Isaac Newton Institute for Mathematical Sciences in Cambridge, and especially for the availability of \textsf{kontsevint} program 
for calculation of Kontsevich graph weights.
RB thanks Marco Gualtieri 
and Henrique Bursztyn for discussions in Banff. 
AVK thanks the organizers of international conference `Open Communications in Nonlinear Mathematical Physics' held during 23--29~June 2024 in Bad Ems, Germany.

\appendix

\newpage

\section{The Kontsevich graph encoding of the affine star\/-\/product $\star_{\text{aff}}\text{ mod }\bar{o}(\hbar^7)$}
\label{AppStarAffineEncoding}

\begin{implement}[Kontsevich graph encoding from {\cite[Implementation 1]{cpp}}]\label{DefEncoding}
The format to store a signed graph $s\Gamma$ with $\Gamma \in G_{m,n}$ is the integer number $m>0$, the integer $n \geqslant 0$, the sign $s$, followed by the (possibly empty, when $n=0$) list of $n$~ordered pairs of targets for edges issued from the internal vertices $m$,\ $\ldots$,\ $m+n-1$, respectively.
The full format is then ($m$,\ $n$, $s$; list of ordered pairs); in plain text we also write \verb"m n s   <list of ordered pairs>".
\end{implement}

\subsection{$\star_{\text{aff}}\text{ mod }\bar{o}(\hbar^7)$}
\label{AppStarAffineOriginalEncoding}

\begin{encoding}
\label{EncStarAffine7}
In the format described in Implementation~\ref{DefEncoding}, 
every graph encoding is followed by this graph's coefficient:

{\tiny

}
\end{encoding}

\twocolumn
\subsection{$\star_{\text{aff}}^{\text{red}}\text{ mod }\bar{o}(\hbar^7)$}
\label{AppStarAffineReducedEncoding}

\begin{encoding}
\label{EncStarAffine7Reduced}
In the format described in Implementation~\ref{DefEncoding},
now weights $\in\mathbb{Q}$:
{\tiny
\begin{verbatim}
h^0:
2 0 1       1
h^1:
2 1 1   0 1    1
h^2:
2 2 1   1 3 0 2    -1/6
2 2 1   1 3 0 1    -1/3
2 2 1   0 3 0 1    1/3
2 2 1   0 1 0 1    1/2
h^3:
2 3 1   1 3 0 2 0 1    -1/6
2 3 1   1 4 0 2 0 1    -1/3
2 3 1   1 4 0 1 0 1    -1/3
2 3 1   0 4 0 1 0 1    1/3
2 3 1   0 1 0 1 0 1    1/6
h^4:
2 4 1   1 5 1 4 0 3 0 2    1/72
2 4 1   1 3 1 4 0 5 0 2    -2/45
2 4 1   1 5 1 4 0 2 0 3    11/180
2 4 1   1 5 1 4 0 3 0 1    1/18
2 4 1   1 3 1 5 0 2 0 1    -2/45
2 4 1   1 3 1 4 0 5 0 1    2/15
2 4 1   1 4 0 5 0 2 0 1    -1/18
2 4 1   1 3 0 4 0 5 0 1    2/45
2 4 1   1 5 0 4 0 2 0 1    -2/15
2 4 1   1 3 0 2 0 1 0 1    -1/12
2 4 1   1 5 0 2 0 1 0 1    -1/3
2 4 1   1 5 0 4 0 1 0 1    -1/9
2 4 1   1 5 0 1 0 1 0 1    -1/6
2 4 1   0 5 0 1 0 1 0 1    1/6
2 4 1   0 1 0 1 0 1 0 1    1/24
2 4 1   1 5 1 4 0 1 0 1    1/18
2 4 1   0 5 0 4 0 1 0 1    1/18
2 4 1   1 4 1 5 1 3 0 1    1/45
2 4 1   0 4 0 5 0 3 0 1    -1/45
2 4 1   1 4 1 5 1 3 0 2    1/90
2 4 1   1 3 0 5 0 2 0 4    1/90
h^5:
2 5 1   1 4 1 2 0 5 0 3 0 1    1/60
2 5 1   3 4 1 6 0 5 0 1 0 1    2/15
2 5 1   1 6 1 2 0 5 0 3 0 1    2/45
2 5 1   1 3 1 4 0 5 0 6 0 1    2/45
2 5 1   1 5 1 4 0 3 0 6 0 1    1/18
2 5 1   3 4 1 6 0 2 0 1 0 1    11/360
2 5 1   1 5 1 4 0 3 0 2 0 1    1/72
2 5 1   1 3 1 6 0 2 0 1 0 1    -2/45
2 5 1   1 3 1 4 0 6 0 1 0 1    2/15
2 5 1   1 4 1 5 0 6 0 1 0 1    1/9
2 5 1   1 4 1 6 0 2 0 1 0 1    1/18
2 5 1   1 3 0 4 0 6 0 1 0 1    2/45
2 5 1   1 6 0 4 0 2 0 1 0 1    -2/15
2 5 1   1 5 0 6 0 2 0 1 0 1    -1/9
2 5 1   1 4 0 6 0 2 0 1 0 1    -1/18
2 5 1   1 6 0 2 0 1 0 1 0 1    -1/6
2 5 1   1 6 0 5 0 1 0 1 0 1    -1/9
2 5 1   1 3 0 2 0 1 0 1 0 1    -1/36
2 5 1   1 4 1 5 1 3 0 2 0 1    1/90
2 5 1   1 5 1 4 1 6 0 3 0 1    2/45
2 5 1   1 4 1 6 1 5 0 3 0 1    -1/45
2 5 1   1 3 0 5 0 2 0 4 0 1    1/90
2 5 1   1 4 0 5 0 6 0 2 0 1    -1/45
2 5 1   1 4 0 2 0 5 0 6 0 1    2/45
2 5 1   0 1 0 1 0 1 0 1 0 1    1/120
2 5 1   1 6 0 1 0 1 0 1 0 1    -1/18
2 5 1   0 6 0 1 0 1 0 1 0 1    1/18
2 5 1   1 6 1 5 0 1 0 1 0 1    1/18
2 5 1   0 6 0 5 0 1 0 1 0 1    1/18
2 5 1   1 4 1 6 1 3 0 1 0 1    1/45
2 5 1   0 4 0 6 0 3 0 1 0 1    -1/45
h^6:
2 6 1   1 4 1 7 1 3 0 6 0 2 0 5    -2/567
2 6 1   3 5 1 7 1 6 0 4 0 2 0 1    -31/3780
2 6 1   1 7 1 4 1 5 0 6 0 3 0 2    -1/360
2 6 1   3 5 1 2 1 6 0 4 0 7 0 1    -7/1620
2 6 1   4 5 1 6 1 2 0 7 0 3 0 1    -11/2160
2 6 1   1 7 1 6 1 5 0 4 0 3 0 2    -1/1296
2 6 1   1 4 1 6 1 7 0 3 0 2 0 1    4/315
2 6 1   4 6 1 7 1 5 0 3 0 1 0 1    -2/105
2 6 1   1 6 1 4 1 7 0 3 0 5 0 1    -2/315
2 6 1   1 4 1 7 1 5 0 6 0 2 0 1    -1/180
2 6 1   1 4 1 7 1 5 0 6 0 3 0 1    -4/105
2 6 1   1 4 1 6 1 3 0 7 0 2 0 1    1/270
2 6 1   1 4 1 6 1 3 0 7 0 5 0 1    2/135
2 6 1   4 6 1 5 1 7 0 3 0 1 0 1    -1/45
2 6 1   3 7 2 6 1 5 0 1 0 1 0 1    -11/1080
2 6 1   1 6 1 4 1 7 0 3 0 2 0 1    -2/135
2 6 1   1 7 1 6 1 5 0 4 0 3 0 1    -1/216
2 6 1   1 5 1 4 0 6 0 3 0 7 0 1    -4/315
2 6 1   5 6 1 4 0 7 0 3 0 1 0 1    -2/105
2 6 1   1 5 1 2 0 3 0 6 0 7 0 1    2/315
2 6 1   1 5 1 2 0 7 0 6 0 3 0 1    1/180
2 6 1   1 5 1 2 0 6 0 7 0 3 0 1    4/105
2 6 1   1 7 1 4 0 6 0 3 0 5 0 1    -1/270
2 6 1   1 7 1 2 0 6 0 3 0 5 0 1    -2/135
2 6 1   5 6 1 4 0 3 0 7 0 1 0 1    -1/45
2 6 1   3 7 2 6 0 5 0 1 0 1 0 1    11/1080
2 6 1   1 5 1 4 0 3 0 6 0 7 0 1    2/135
2 6 1   1 6 1 5 0 7 0 3 0 2 0 1    1/216
2 6 1   1 4 1 2 0 5 0 3 0 1 0 1    1/120
2 6 1   3 4 1 7 0 6 0 1 0 1 0 1    2/45
2 6 1   1 5 1 4 0 7 0 3 0 1 0 1    2/45
2 6 1   1 7 1 5 0 3 0 2 0 1 0 1    2/45
2 6 1   1 5 1 6 0 3 0 7 0 1 0 1    1/18
2 6 1   3 7 1 6 0 5 0 1 0 1 0 1    2/45
2 6 1   4 7 1 6 0 5 0 1 0 1 0 1    -2/45
2 6 1   1 6 1 2 0 7 0 3 0 1 0 1    4/135
2 6 1   1 7 1 4 0 5 0 6 0 1 0 1    4/135
2 6 1   3 7 2 6 0 1 0 1 0 1 0 1    11/720
2 6 1   1 7 1 4 0 3 0 2 0 1 0 1    1/18
2 6 1   1 6 1 5 0 7 0 3 0 1 0 1    1/54
2 6 1   1 5 1 4 0 3 0 2 0 1 0 1    1/144
2 6 1   1 5 1 7 1 3 1 4 0 2 0 6    -1/378
2 6 1   3 6 1 5 1 2 1 4 0 7 0 1    -31/11340
2 6 1   1 7 1 4 1 5 1 6 0 3 0 2    -1/540
2 6 1   3 5 2 4 1 6 1 7 0 1 0 1    1/567
2 6 1   1 4 1 2 0 7 0 3 0 5 0 6    -1/378
2 6 1   3 4 1 7 0 6 0 2 0 5 0 1    -31/11340
2 6 1   1 4 1 5 0 6 0 3 0 7 0 2    -1/540
2 6 1   3 5 2 4 0 6 0 7 0 1 0 1    1/567
2 6 1   1 7 0 6 0 1 0 1 0 1 0 1    -1/18
2 6 1   1 3 0 2 0 1 0 1 0 1 0 1    -1/144
2 6 1   1 3 0 7 0 1 0 1 0 1 0 1    -1/18
2 6 1   3 7 1 6 0 1 0 1 0 1 0 1    1/15
2 6 1   1 3 1 7 0 2 0 1 0 1 0 1    2/45
2 6 1   1 4 1 6 0 7 0 1 0 1 0 1    1/9
2 6 1   1 7 1 6 0 5 0 1 0 1 0 1    1/54
2 6 1   1 4 1 7 0 2 0 1 0 1 0 1    1/36
2 6 1   3 7 0 6 0 1 0 1 0 1 0 1    1/15
2 6 1   1 3 0 4 0 7 0 1 0 1 0 1    -2/45
2 6 1   1 7 0 2 0 6 0 1 0 1 0 1    -1/9
2 6 1   1 7 0 6 0 5 0 1 0 1 0 1    -1/54
2 6 1   1 3 0 2 0 7 0 1 0 1 0 1    -1/36
2 6 1   1 4 1 5 1 3 0 2 0 1 0 1    1/180
2 6 1   4 5 1 6 1 7 0 1 0 1 0 1    -2/45
2 6 1   1 7 1 4 1 6 0 3 0 1 0 1    -4/135
2 6 1   1 4 1 7 1 3 0 6 0 1 0 1    1/135
2 6 1   1 5 1 7 1 6 0 2 0 1 0 1    -1/108
2 6 1   1 5 1 4 1 7 0 3 0 1 0 1    2/45
2 6 1   1 4 1 7 1 5 0 3 0 1 0 1    -1/45
2 6 1   1 3 0 5 0 2 0 4 0 1 0 1    1/180
2 6 1   4 7 0 6 0 5 0 1 0 1 0 1    2/45
2 6 1   1 4 0 7 0 5 0 6 0 1 0 1    -4/135
2 6 1   1 6 0 5 0 7 0 4 0 1 0 1    1/135
2 6 1   1 5 0 7 0 6 0 2 0 1 0 1    -1/108
2 6 1   1 4 0 5 0 7 0 2 0 1 0 1    -1/45
2 6 1   1 4 0 2 0 5 0 7 0 1 0 1    2/45
2 6 1   1 7 1 4 1 5 1 6 0 3 0 1    -1/270
2 6 1   1 4 1 6 1 5 1 7 0 3 0 1    -1/270
2 6 1   3 6 1 5 1 7 1 4 0 1 0 1    -2/315
2 6 1   1 5 1 4 1 7 1 6 0 3 0 1    -4/315
2 6 1   1 4 0 7 0 5 0 6 0 2 0 1    1/270
2 6 1   1 4 0 5 0 2 0 6 0 7 0 1    1/270
2 6 1   3 6 0 5 0 7 0 4 0 1 0 1    -2/315
2 6 1   1 4 0 6 0 5 0 7 0 2 0 1    4/315
2 6 1   0 1 0 1 0 1 0 1 0 1 0 1    1/720
2 6 1   1 7 0 1 0 1 0 1 0 1 0 1    -1/72
2 6 1   0 7 0 1 0 1 0 1 0 1 0 1    1/72
2 6 1   1 7 1 6 0 1 0 1 0 1 0 1    1/36
2 6 1   0 7 0 6 0 1 0 1 0 1 0 1    1/36
2 6 1   1 4 1 7 1 3 0 1 0 1 0 1    1/90
2 6 1   1 7 1 6 1 5 0 1 0 1 0 1    -1/162
2 6 1   0 4 0 7 0 3 0 1 0 1 0 1    -1/90
2 6 1   0 7 0 6 0 5 0 1 0 1 0 1    1/162
2 6 1   1 6 1 7 1 5 1 3 1 4 0 1    -2/945
2 6 1   0 6 0 7 0 5 0 3 0 4 0 1    2/945
2 6 1   1 7 1 4 1 5 1 6 0 1 0 1    -1/135
2 6 1   0 7 0 4 0 5 0 6 0 1 0 1    -1/135
2 6 1   1 6 1 7 1 5 1 3 1 4 0 2    -1/945
2 6 1   1 3 0 7 0 2 0 6 0 4 0 5    -1/945

h^7:
2 7 1   1 6 1 5 1 3 0 7 0 8 0 4 0 1    -1/180
2 7 1   1 3 1 5 1 2 0 7 0 4 0 6 0 1    53/11340
2 7 1   1 5 1 2 1 6 0 7 0 3 0 4 0 1    -31/3780
2 7 1   1 3 1 5 1 6 0 4 0 7 0 8 0 1    2/135
2 7 1   1 3 1 6 1 2 0 7 0 8 0 4 0 1    -8/945
2 7 1   1 3 1 5 1 8 0 7 0 4 0 6 0 1    -29/945
2 7 1   3 6 1 4 1 5 0 7 0 8 0 1 0 1    -46/945
2 7 1   3 5 1 8 1 7 0 6 0 4 0 1 0 1    -32/945
2 7 1   3 6 1 7 1 5 0 2 0 8 0 1 0 1    8/315
2 7 1   3 6 1 4 1 5 0 8 0 2 0 1 0 1    -191/22680
2 7 1   3 6 1 2 1 7 0 4 0 8 0 1 0 1    -11/1080
2 7 1   1 6 1 2 1 5 0 4 0 7 0 3 0 1    -1/360
2 7 1   1 8 1 2 1 5 0 4 0 7 0 3 0 1    -1/135
2 7 1   1 3 1 5 1 6 0 7 0 4 0 8 0 1    -1/135
2 7 1   3 6 1 7 1 5 0 4 0 8 0 1 0 1    -1/45
2 7 1   3 8 2 7 1 5 0 4 0 1 0 1 0 1    -11/2160
2 7 1   1 8 1 7 1 6 0 2 0 4 0 3 0 1    -1/216
2 7 1   1 7 1 6 1 5 0 4 0 3 0 2 0 1    -1/1296
2 7 1   4 8 2 6 3 7 0 1 0 1 0 1 0 1    31/22680
2 7 1   3 5 1 4 1 7 0 6 0 8 0 1 0 1    -16/945
2 7 1   4 5 2 6 1 8 0 7 0 1 0 1 0 1    16/945
2 7 1   1 4 1 8 0 2 0 7 0 1 0 1 0 1    1/54
2 7 1   1 5 1 4 0 3 0 2 0 1 0 1 0 1    1/432
2 7 1   1 5 1 4 0 2 0 3 0 1 0 1 0 1    1/360
2 7 1   3 8 1 4 0 2 0 1 0 1 0 1 0 1    1/135
2 7 1   1 5 1 8 0 3 0 4 0 1 0 1 0 1    1/15
2 7 1   1 3 1 4 0 5 0 8 0 1 0 1 0 1    -1/45
2 7 1   1 5 1 8 0 3 0 7 0 1 0 1 0 1    1/18
2 7 1   1 7 1 8 0 5 0 3 0 1 0 1 0 1    4/135
2 7 1   4 8 1 7 0 6 0 1 0 1 0 1 0 1    -2/135
2 7 1   3 8 1 7 0 6 0 1 0 1 0 1 0 1    2/135
2 7 1   1 3 1 5 0 7 0 8 0 1 0 1 0 1    4/135
2 7 1   1 5 1 7 0 8 0 6 0 1 0 1 0 1    1/27
2 7 1   1 5 1 4 0 3 0 8 0 1 0 1 0 1    1/36
2 7 1   1 8 1 2 1 6 0 4 0 3 0 1 0 1    4/315
2 7 1   3 8 1 5 1 7 0 4 0 1 0 1 0 1    -2/105
2 7 1   1 3 1 5 1 8 0 6 0 4 0 1 0 1    -2/105
2 7 1   1 3 1 4 1 5 0 6 0 8 0 1 0 1    8/945
2 7 1   1 6 1 5 1 3 0 8 0 4 0 1 0 1    -2/105
2 7 1   4 8 1 5 1 6 0 7 0 1 0 1 0 1    -2/45
2 7 1   1 7 1 5 1 2 0 8 0 4 0 1 0 1    -4/135
2 7 1   1 3 1 6 1 2 0 8 0 4 0 1 0 1    1/270
2 7 1   1 8 1 5 1 3 0 6 0 4 0 1 0 1    -1/180
2 7 1   1 7 1 8 1 3 0 6 0 4 0 1 0 1    -1/135
2 7 1   4 5 1 7 1 8 0 6 0 1 0 1 0 1    -1/45
2 7 1   1 7 1 5 1 6 0 4 0 8 0 1 0 1    -1/45
2 7 1   4 8 1 5 1 7 0 3 0 1 0 1 0 1    -1/45
2 7 1   3 8 2 7 1 6 0 1 0 1 0 1 0 1    -11/1080
2 7 1   1 8 1 2 1 5 0 4 0 3 0 1 0 1    -2/135
2 7 1   1 7 1 5 1 6 0 8 0 4 0 1 0 1    -1/54
2 7 1   1 8 1 6 1 5 0 4 0 3 0 1 0 1    -1/216
2 7 1   1 7 1 6 1 2 0 8 0 4 0 1 0 1    2/135
2 7 1   1 3 1 6 1 7 0 8 0 4 0 1 0 1    -1/135
2 7 1   1 5 1 4 0 6 0 3 0 8 0 1 0 1    -4/315
2 7 1   4 8 1 5 0 3 0 7 0 1 0 1 0 1    -2/105
2 7 1   1 5 1 2 0 6 0 8 0 3 0 1 0 1    2/105
2 7 1   1 8 1 2 0 6 0 3 0 5 0 1 0 1    -8/945
2 7 1   1 8 1 5 0 3 0 6 0 2 0 1 0 1    2/105
2 7 1   5 8 1 7 0 3 0 6 0 1 0 1 0 1    -2/45
2 7 1   1 5 1 8 0 3 0 6 0 7 0 1 0 1    4/135
2 7 1   1 8 1 4 0 6 0 3 0 5 0 1 0 1    -1/270
2 7 1   1 5 1 2 0 8 0 6 0 3 0 1 0 1    1/180
2 7 1   1 3 1 5 0 8 0 6 0 7 0 1 0 1    1/135
2 7 1   3 5 1 8 0 7 0 6 0 1 0 1 0 1    1/45
2 7 1   1 7 1 6 0 3 0 8 0 2 0 1 0 1    1/45
2 7 1   5 8 1 4 0 3 0 7 0 1 0 1 0 1    -1/45
2 7 1   3 8 2 7 0 6 0 1 0 1 0 1 0 1    11/1080
2 7 1   1 5 1 4 0 3 0 6 0 8 0 1 0 1    2/135
2 7 1   1 7 1 4 0 3 0 8 0 2 0 1 0 1    1/54
2 7 1   1 6 1 5 0 8 0 3 0 2 0 1 0 1    1/216
2 7 1   1 7 1 5 0 6 0 8 0 3 0 1 0 1    1/135
2 7 1   1 8 1 5 0 3 0 6 0 7 0 1 0 1    -2/135
2 7 1   1 4 1 8 1 5 1 7 0 3 0 2 0 1    -1/270
2 7 1   1 5 1 7 1 3 1 4 0 2 0 6 0 1    11/7560
2 7 1   1 7 1 4 1 5 1 8 0 3 0 6 0 1    -2/315
2 7 1   1 5 1 7 1 3 1 4 0 2 0 8 0 1    -4/315
2 7 1   3 4 1 8 1 5 1 6 0 7 0 1 0 1    -4/315
2 7 1   1 5 1 4 1 7 1 6 0 3 0 2 0 1    -31/7560
2 7 1   1 5 1 4 1 8 1 6 0 7 0 3 0 1    8/945
2 7 1   1 5 1 7 1 3 1 4 0 8 0 6 0 1    -2/945
2 7 1   1 7 1 4 1 5 1 6 0 3 0 2 0 1    -1/540
2 7 1   4 6 1 5 1 2 1 8 0 3 0 1 0 1    -37/15120
2 7 1   1 7 1 5 1 6 1 8 0 4 0 3 0 1    -1/135
2 7 1   1 7 1 5 1 8 1 6 0 4 0 2 0 1    1/270
2 7 1   1 5 1 4 0 6 0 8 0 7 0 3 0 1    -1/270
2 7 1   1 4 1 2 0 7 0 3 0 5 0 6 0 1    11/7560
2 7 1   1 5 1 2 0 3 0 6 0 7 0 8 0 1    -2/315
2 7 1   1 4 1 8 0 7 0 3 0 5 0 6 0 1    -4/315
2 7 1   4 5 1 7 0 8 0 6 0 3 0 1 0 1    4/315
2 7 1   1 5 1 4 0 7 0 6 0 3 0 2 0 1    -31/7560
2 7 1   1 5 1 2 0 6 0 7 0 3 0 8 0 1    8/945
2 7 1   1 8 1 2 0 7 0 3 0 5 0 6 0 1    -2/945
2 7 1   1 4 1 5 0 6 0 3 0 7 0 2 0 1    -1/540
2 7 1   3 5 1 4 0 6 0 2 0 8 0 1 0 1    -37/15120
2 7 1   1 6 1 5 0 7 0 8 0 2 0 3 0 1    1/270
2 7 1   1 6 1 5 0 3 0 7 0 2 0 8 0 1    -1/135
2 7 1   1 8 0 2 0 1 0 1 0 1 0 1 0 1    -1/72
2 7 1   1 8 0 7 0 1 0 1 0 1 0 1 0 1    -1/54
2 7 1   1 3 0 2 0 1 0 1 0 1 0 1 0 1    -1/720
2 7 1   3 8 1 7 0 1 0 1 0 1 0 1 0 1    1/45
2 7 1   1 3 1 8 0 2 0 1 0 1 0 1 0 1    2/135
2 7 1   1 4 1 7 0 8 0 1 0 1 0 1 0 1    1/18
2 7 1   1 8 1 7 0 6 0 1 0 1 0 1 0 1    1/54
2 7 1   1 4 1 8 0 2 0 1 0 1 0 1 0 1    1/108
2 7 1   3 8 0 7 0 1 0 1 0 1 0 1 0 1    1/45
2 7 1   1 3 0 4 0 8 0 1 0 1 0 1 0 1    -2/135
2 7 1   1 8 0 2 0 7 0 1 0 1 0 1 0 1    -1/18
2 7 1   1 8 0 7 0 6 0 1 0 1 0 1 0 1    -1/54
2 7 1   1 3 0 2 0 8 0 1 0 1 0 1 0 1    -1/108
2 7 1   1 4 1 5 1 3 0 2 0 1 0 1 0 1    1/540
2 7 1   4 6 1 7 1 8 0 1 0 1 0 1 0 1    -2/45
2 7 1   1 4 1 7 1 8 0 2 0 1 0 1 0 1    -4/135
2 7 1   1 3 1 4 1 7 0 8 0 1 0 1 0 1    1/135
2 7 1   1 5 1 8 1 7 0 6 0 1 0 1 0 1    -1/54
2 7 1   1 5 1 8 1 7 0 2 0 1 0 1 0 1    -1/108
2 7 1   1 5 1 4 1 8 0 3 0 1 0 1 0 1    1/45
2 7 1   1 3 1 5 1 8 0 4 0 1 0 1 0 1    -1/90
2 7 1   1 3 0 5 0 2 0 4 0 1 0 1 0 1    1/540
2 7 1   4 8 0 7 0 6 0 1 0 1 0 1 0 1    2/45
2 7 1   1 4 0 8 0 5 0 7 0 1 0 1 0 1    -4/135
2 7 1   1 7 0 5 0 8 0 4 0 1 0 1 0 1    1/135
2 7 1   1 6 0 8 0 7 0 2 0 1 0 1 0 1    -1/54
2 7 1   1 5 0 8 0 7 0 2 0 1 0 1 0 1    -1/108
2 7 1   1 4 0 5 0 8 0 2 0 1 0 1 0 1    -1/90
2 7 1   1 4 0 2 0 5 0 8 0 1 0 1 0 1    1/45
2 7 1   1 5 1 8 1 3 1 4 0 2 0 1 0 1    2/315
2 7 1   1 8 1 4 1 5 1 6 0 3 0 1 0 1    -1/270
2 7 1   1 4 1 6 1 5 1 7 0 8 0 1 0 1    -1/135
2 7 1   1 4 1 5 1 6 1 8 0 3 0 1 0 1    -2/105
2 7 1   1 4 1 6 1 5 1 8 0 3 0 1 0 1    -1/270
2 7 1   1 8 1 6 1 5 1 7 0 4 0 1 0 1    -2/135
2 7 1   1 5 1 7 1 8 1 6 0 4 0 1 0 1    1/135
2 7 1   4 8 1 2 1 5 1 7 0 1 0 1 0 1    2/315
2 7 1   1 3 0 6 0 8 0 4 0 5 0 1 0 1    -2/315
2 7 1   1 4 0 8 0 5 0 6 0 2 0 1 0 1    1/270
2 7 1   1 8 0 5 0 2 0 6 0 7 0 1 0 1    1/135
2 7 1   1 4 0 6 0 5 0 8 0 2 0 1 0 1    2/105
2 7 1   1 4 0 5 0 2 0 6 0 8 0 1 0 1    1/270
2 7 1   1 5 0 6 0 7 0 8 0 2 0 1 0 1    -1/135
2 7 1   1 5 0 8 0 2 0 6 0 7 0 1 0 1    2/135
2 7 1   4 8 0 2 0 5 0 7 0 1 0 1 0 1    2/315
2 7 1   1 6 1 7 1 5 1 3 1 4 0 2 0 1    -1/945
2 7 1   1 7 1 6 1 8 1 4 1 5 0 3 0 1    -4/945
2 7 1   3 4 1 8 1 5 1 6 1 7 0 1 0 1    2/945
2 7 1   1 6 1 4 1 8 1 7 1 5 0 3 0 1    4/945
2 7 1   1 8 1 6 1 7 1 4 1 5 0 2 0 1    -2/945
2 7 1   1 3 0 7 0 2 0 6 0 4 0 5 0 1    -1/945
2 7 1   1 4 0 2 0 7 0 8 0 5 0 6 0 1    -4/945
2 7 1   3 4 0 8 0 5 0 6 0 7 0 1 0 1    -2/945
2 7 1   1 3 0 8 0 7 0 2 0 5 0 6 0 1    -2/945
2 7 1   1 4 0 7 0 5 0 8 0 2 0 6 0 1    4/945
2 7 1   0 1 0 1 0 1 0 1 0 1 0 1 0 1    1/5040
2 7 1   1 8 0 1 0 1 0 1 0 1 0 1 0 1    -1/360
2 7 1   0 8 0 1 0 1 0 1 0 1 0 1 0 1    1/360
2 7 1   1 8 1 7 0 1 0 1 0 1 0 1 0 1    1/108
2 7 1   0 8 0 7 0 1 0 1 0 1 0 1 0 1    1/108
2 7 1   1 3 1 4 1 8 0 1 0 1 0 1 0 1    1/270
2 7 1   1 8 1 7 1 6 0 1 0 1 0 1 0 1    -1/162
2 7 1   0 3 0 4 0 8 0 1 0 1 0 1 0 1    -1/270
2 7 1   0 8 0 7 0 6 0 1 0 1 0 1 0 1    1/162
2 7 1   1 8 1 4 1 5 1 7 0 1 0 1 0 1    -1/135
2 7 1   0 8 0 4 0 5 0 7 0 1 0 1 0 1    -1/135
2 7 1   1 6 1 8 1 5 1 3 1 4 0 1 0 1    -2/945
2 7 1   0 6 0 8 0 5 0 3 0 4 0 1 0 1    2/945
\end{verbatim}
}
\end{encoding}

\onecolumn

\section{The analytic formula of (reduced) affine star\/-\/product $\star_{\text{aff}}^{\text{red}}\text{ mod }\bar{o}(\hbar^7)$ with harmonic propagators for affine Poisson structures}
\label{AppStarAffineFormula}

\noindent%
The analytic formula of seventh order expansion of Kontsevich's $\star_{\text{aff}}$ for affine Poisson brackets reads as follows
(this expression is obtained from the reduced encoding \emph{without} Riemann $\zeta(3)^2/\pi^6$ in Appendix~\ref{AppStarAffineReducedEncoding}; for every affine Poisson bracket this formula with only rational coefficients is evaluated to the same expression as one obtains from the graph encoding \emph{with} Riemann $\zeta(3)^2/\pi^6$ in Appendix~\ref{AppStarAffineOriginalEncoding}: the difference of two expressions amounts to a linear differential operator acting on the Jacobiator $\tfrac{1}{2}\schouten{P,P}$ of the Poisson bi-vector $P$):

{\footnotesize
\begin{multline*}
f \star_{\text{aff}}^{\text{red}} g = 
f g 
+\hbar\cP^{ij} \partial_{i} f \partial_{j} g 
+\hbar^{2}\big(
-\tfrac{1}{6} \partial_{{\ell}} \cP^{ij} \partial_{j} \cP^{k{\ell}} \partial_{i} f \partial_{k} g 
-\tfrac{1}{3} \partial_{{\ell}} \cP^{ij} \cP^{k{\ell}} \partial_{i} f \partial_{k} \partial_{j} g 
+\tfrac{1}{3} \partial_{{\ell}} \cP^{ij} \cP^{k{\ell}} \partial_{k} \partial_{i} f \partial_{j} g \\
+\tfrac{1}{2} \cP^{ij} \cP^{k{\ell}} \partial_{k} \partial_{i} f \partial_{{\ell}} \partial_{j} g 
\big)
+\hbar^{3}\big(
-\tfrac{1}{6} \cP^{ij} \partial_{n} \cP^{k{\ell}} \partial_{{\ell}} \cP^{mn} \partial_{k} \partial_{i} f \partial_{m} \partial_{j} g 
-\tfrac{1}{3} \partial_{n} \cP^{ij} \cP^{k{\ell}} \partial_{{\ell}} \cP^{mn} \partial_{k} \partial_{i} f \partial_{m} \partial_{j} g \\
-\tfrac{1}{3} \partial_{n} \cP^{ij} \cP^{k{\ell}} \cP^{mn} \partial_{k} \partial_{i} f \partial_{m} \partial_{{\ell}} \partial_{j} g 
+\tfrac{1}{3} \partial_{n} \cP^{ij} \cP^{k{\ell}} \cP^{mn} \partial_{m} \partial_{k} \partial_{i} f \partial_{{\ell}} \partial_{j} g 
+\tfrac{1}{6} \cP^{ij} \cP^{k{\ell}} \cP^{mn} \partial_{m} \partial_{k} \partial_{i} f \partial_{n} \partial_{{\ell}} \partial_{j} g 
\big)\\
+\hbar^{4}\big(
\tfrac{1}{72} \partial_{{\ell}} \cP^{ij} \partial_{j} \cP^{k{\ell}} \partial_{q} \cP^{mn} \partial_{n} \cP^{pq} \partial_{m} \partial_{i} f \partial_{p} \partial_{k} g 
-\tfrac{2}{45} \partial_{q} \cP^{ij} \partial_{j} \cP^{k{\ell}} \partial_{{\ell}} \cP^{mn} \partial_{n} \cP^{pq} \partial_{k} \partial_{i} f \partial_{p} \partial_{m} g \\
+\tfrac{11}{180} \partial_{q} \cP^{ij} \partial_{j} \cP^{k{\ell}} \partial_{{\ell}} \cP^{mn} \partial_{n} \cP^{pq} \partial_{m} \partial_{i} f \partial_{p} \partial_{k} g 
+\tfrac{1}{18} \partial_{q} \cP^{ij} \partial_{n} \cP^{k{\ell}} \partial_{{\ell}} \cP^{mn} \cP^{pq} \partial_{k} \partial_{i} f \partial_{p} \partial_{m} \partial_{j} g \\
-\tfrac{2}{45} \partial_{q} \cP^{ij} \cP^{k{\ell}} \partial_{{\ell}} \cP^{mn} \partial_{n} \cP^{pq} \partial_{k} \partial_{i} f \partial_{p} \partial_{m} \partial_{j} g 
+\tfrac{2}{15} \partial_{{\ell}} \cP^{ij} \partial_{n} \cP^{k{\ell}} \partial_{q} \cP^{mn} \cP^{pq} \partial_{k} \partial_{i} f \partial_{p} \partial_{m} \partial_{j} g \\
-\tfrac{1}{18} \partial_{{\ell}} \cP^{ij} \cP^{k{\ell}} \partial_{q} \cP^{mn} \partial_{n} \cP^{pq} \partial_{m} \partial_{k} \partial_{i} f \partial_{p} \partial_{j} g 
+\tfrac{2}{45} \partial_{{\ell}} \cP^{ij} \partial_{n} \cP^{k{\ell}} \partial_{q} \cP^{mn} \cP^{pq} \partial_{m} \partial_{k} \partial_{i} f \partial_{p} \partial_{j} g \\
-\tfrac{2}{15} \partial_{q} \cP^{ij} \cP^{k{\ell}} \partial_{{\ell}} \cP^{mn} \partial_{n} \cP^{pq} \partial_{m} \partial_{k} \partial_{i} f \partial_{p} \partial_{j} g 
-\tfrac{1}{12} \cP^{ij} \cP^{k{\ell}} \partial_{q} \cP^{mn} \partial_{n} \cP^{pq} \partial_{m} \partial_{k} \partial_{i} f \partial_{p} \partial_{{\ell}} \partial_{j} g \\
-\tfrac{1}{3} \partial_{q} \cP^{ij} \cP^{k{\ell}} \cP^{mn} \partial_{n} \cP^{pq} \partial_{m} \partial_{k} \partial_{i} f \partial_{p} \partial_{{\ell}} \partial_{j} g 
-\tfrac{1}{9} \partial_{n} \cP^{ij} \partial_{q} \cP^{k{\ell}} \cP^{mn} \cP^{pq} \partial_{m} \partial_{k} \partial_{i} f \partial_{p} \partial_{{\ell}} \partial_{j} g \\
-\tfrac{1}{6} \partial_{q} \cP^{ij} \cP^{k{\ell}} \cP^{mn} \cP^{pq} \partial_{m} \partial_{k} \partial_{i} f \partial_{p} \partial_{n} \partial_{{\ell}} \partial_{j} g 
+\tfrac{1}{6} \partial_{q} \cP^{ij} \cP^{k{\ell}} \cP^{mn} \cP^{pq} \partial_{p} \partial_{m} \partial_{k} \partial_{i} f \partial_{n} \partial_{{\ell}} \partial_{j} g \\
+\tfrac{1}{24} \cP^{ij} \cP^{k{\ell}} \cP^{mn} \cP^{pq} \partial_{p} \partial_{m} \partial_{k} \partial_{i} f \partial_{q} \partial_{n} \partial_{{\ell}} \partial_{j} g 
+\tfrac{1}{18} \partial_{n} \cP^{ij} \partial_{q} \cP^{k{\ell}} \cP^{mn} \cP^{pq} \partial_{k} \partial_{i} f \partial_{p} \partial_{m} \partial_{{\ell}} \partial_{j} g \\
+\tfrac{1}{18} \partial_{n} \cP^{ij} \partial_{q} \cP^{k{\ell}} \cP^{mn} \cP^{pq} \partial_{p} \partial_{m} \partial_{k} \partial_{i} f \partial_{{\ell}} \partial_{j} g 
+\tfrac{1}{45} \partial_{{\ell}} \cP^{ij} \partial_{n} \cP^{k{\ell}} \partial_{q} \cP^{mn} \cP^{pq} \partial_{i} f \partial_{p} \partial_{m} \partial_{k} \partial_{j} g \\
-\tfrac{1}{45} \partial_{{\ell}} \cP^{ij} \partial_{n} \cP^{k{\ell}} \partial_{q} \cP^{mn} \cP^{pq} \partial_{p} \partial_{m} \partial_{k} \partial_{i} f \partial_{j} g 
+\tfrac{1}{90} \partial_{q} \cP^{ij} \partial_{j} \cP^{k{\ell}} \partial_{{\ell}} \cP^{mn} \partial_{n} \cP^{pq} \partial_{i} f \partial_{p} \partial_{m} \partial_{k} g \\
+\tfrac{1}{90} \partial_{q} \cP^{ij} \partial_{j} \cP^{k{\ell}} \partial_{{\ell}} \cP^{mn} \partial_{n} \cP^{pq} \partial_{m} \partial_{k} \partial_{i} f \partial_{p} g 
\big)
+\hbar^{5}\big(
\tfrac{1}{60} \cP^{ij} \partial_{s} \cP^{k{\ell}} \partial_{{\ell}} \cP^{mn} \partial_{n} \cP^{pq} \partial_{q} \cP^{rs} \partial_{m} \partial_{k} \partial_{i} f \partial_{r} \partial_{p} \partial_{j} g \\
-\tfrac{2}{15} \partial_{n} \cP^{ij} \partial_{q} \cP^{k{\ell}} \partial_{r} \cP^{mn} \partial_{s} \cP^{pq} \cP^{rs} \partial_{m} \partial_{k} \partial_{i} f \partial_{p} \partial_{{\ell}} \partial_{j} g 
+\tfrac{2}{45} \partial_{s} \cP^{ij} \cP^{k{\ell}} \partial_{{\ell}} \cP^{mn} \partial_{n} \cP^{pq} \partial_{q} \cP^{rs} \partial_{m} \partial_{k} \partial_{i} f \partial_{r} \partial_{p} \partial_{j} g \\
+\tfrac{2}{45} \partial_{{\ell}} \cP^{ij} \partial_{n} \cP^{k{\ell}} \partial_{q} \cP^{mn} \partial_{s} \cP^{pq} \cP^{rs} \partial_{m} \partial_{k} \partial_{i} f \partial_{r} \partial_{p} \partial_{j} g 
+\tfrac{1}{18} \partial_{{\ell}} \cP^{ij} \partial_{s} \cP^{k{\ell}} \partial_{q} \cP^{mn} \partial_{n} \cP^{pq} \cP^{rs} \partial_{m} \partial_{k} \partial_{i} f \partial_{r} \partial_{p} \partial_{j} g \\
-\tfrac{11}{360} \partial_{s} \cP^{ij} \cP^{k{\ell}} \partial_{p} \cP^{mn} \partial_{n} \cP^{pq} \partial_{q} \cP^{rs} \partial_{m} \partial_{k} \partial_{i} f \partial_{r} \partial_{{\ell}} \partial_{j} g 
+\tfrac{1}{72} \cP^{ij} \partial_{n} \cP^{k{\ell}} \partial_{{\ell}} \cP^{mn} \partial_{s} \cP^{pq} \partial_{q} \cP^{rs} \partial_{p} \partial_{k} \partial_{i} f \partial_{r} \partial_{m} \partial_{j} g \\
-\tfrac{2}{45} \partial_{s} \cP^{ij} \cP^{k{\ell}} \cP^{mn} \partial_{n} \cP^{pq} \partial_{q} \cP^{rs} \partial_{m} \partial_{k} \partial_{i} f \partial_{r} \partial_{p} \partial_{{\ell}} \partial_{j} g 
+\tfrac{2}{15} \partial_{n} \cP^{ij} \cP^{k{\ell}} \partial_{q} \cP^{mn} \partial_{s} \cP^{pq} \cP^{rs} \partial_{m} \partial_{k} \partial_{i} f \partial_{r} \partial_{p} \partial_{{\ell}} \partial_{j} g \\
+\tfrac{1}{9} \partial_{n} \cP^{ij} \partial_{q} \cP^{k{\ell}} \partial_{s} \cP^{mn} \cP^{pq} \cP^{rs} \partial_{m} \partial_{k} \partial_{i} f \partial_{r} \partial_{p} \partial_{{\ell}} \partial_{j} g 
+\tfrac{1}{18} \partial_{s} \cP^{ij} \cP^{k{\ell}} \partial_{q} \cP^{mn} \partial_{n} \cP^{pq} \cP^{rs} \partial_{m} \partial_{k} \partial_{i} f \partial_{r} \partial_{p} \partial_{{\ell}} \partial_{j} g \\
+\tfrac{2}{45} \partial_{n} \cP^{ij} \cP^{k{\ell}} \partial_{q} \cP^{mn} \partial_{s} \cP^{pq} \cP^{rs} \partial_{p} \partial_{m} \partial_{k} \partial_{i} f \partial_{r} \partial_{{\ell}} \partial_{j} g 
-\tfrac{2}{15} \partial_{s} \cP^{ij} \cP^{k{\ell}} \cP^{mn} \partial_{n} \cP^{pq} \partial_{q} \cP^{rs} \partial_{p} \partial_{m} \partial_{k} \partial_{i} f \partial_{r} \partial_{{\ell}} \partial_{j} g \\
-\tfrac{1}{9} \partial_{n} \cP^{ij} \partial_{s} \cP^{k{\ell}} \cP^{mn} \cP^{pq} \partial_{q} \cP^{rs} \partial_{p} \partial_{m} \partial_{k} \partial_{i} f \partial_{r} \partial_{{\ell}} \partial_{j} g 
-\tfrac{1}{18} \partial_{n} \cP^{ij} \cP^{k{\ell}} \cP^{mn} \partial_{s} \cP^{pq} \partial_{q} \cP^{rs} \partial_{p} \partial_{m} \partial_{k} \partial_{i} f \partial_{r} \partial_{{\ell}} \partial_{j} g \\
-\tfrac{1}{6} \partial_{s} \cP^{ij} \cP^{k{\ell}} \cP^{mn} \cP^{pq} \partial_{q} \cP^{rs} \partial_{p} \partial_{m} \partial_{k} \partial_{i} f \partial_{r} \partial_{n} \partial_{{\ell}} \partial_{j} g 
-\tfrac{1}{9} \partial_{q} \cP^{ij} \partial_{s} \cP^{k{\ell}} \cP^{mn} \cP^{pq} \cP^{rs} \partial_{p} \partial_{m} \partial_{k} \partial_{i} f \partial_{r} \partial_{n} \partial_{{\ell}} \partial_{j} g \\
-\tfrac{1}{36} \cP^{ij} \cP^{k{\ell}} \cP^{mn} \partial_{s} \cP^{pq} \partial_{q} \cP^{rs} \partial_{p} \partial_{m} \partial_{k} \partial_{i} f \partial_{r} \partial_{n} \partial_{{\ell}} \partial_{j} g 
+\tfrac{1}{90} \cP^{ij} \partial_{s} \cP^{k{\ell}} \partial_{{\ell}} \cP^{mn} \partial_{n} \cP^{pq} \partial_{q} \cP^{rs} \partial_{k} \partial_{i} f \partial_{r} \partial_{p} \partial_{m} \partial_{j} g \\
+\tfrac{2}{45} \partial_{q} \cP^{ij} \partial_{s} \cP^{k{\ell}} \partial_{{\ell}} \cP^{mn} \partial_{n} \cP^{pq} \cP^{rs} \partial_{k} \partial_{i} f \partial_{r} \partial_{p} \partial_{m} \partial_{j} g 
-\tfrac{1}{45} \partial_{n} \cP^{ij} \partial_{q} \cP^{k{\ell}} \partial_{{\ell}} \cP^{mn} \partial_{s} \cP^{pq} \cP^{rs} \partial_{k} \partial_{i} f \partial_{r} \partial_{p} \partial_{m} \partial_{j} g \\
+\tfrac{1}{90} \cP^{ij} \partial_{s} \cP^{k{\ell}} \partial_{{\ell}} \cP^{mn} \partial_{n} \cP^{pq} \partial_{q} \cP^{rs} \partial_{p} \partial_{m} \partial_{k} \partial_{i} f \partial_{r} \partial_{j} g 
-\tfrac{1}{45} \partial_{{\ell}} \cP^{ij} \partial_{s} \cP^{k{\ell}} \cP^{mn} \partial_{n} \cP^{pq} \partial_{q} \cP^{rs} \partial_{p} \partial_{m} \partial_{k} \partial_{i} f \partial_{r} \partial_{j} g \\
+\tfrac{2}{45} \partial_{{\ell}} \cP^{ij} \partial_{n} \cP^{k{\ell}} \partial_{s} \cP^{mn} \cP^{pq} \partial_{q} \cP^{rs} \partial_{p} \partial_{m} \partial_{k} \partial_{i} f \partial_{r} \partial_{j} g 
+\tfrac{1}{120} \cP^{ij} \cP^{k{\ell}} \cP^{mn} \cP^{pq} \cP^{rs} \partial_{r} \partial_{p} \partial_{m} \partial_{k} \partial_{i} f \partial_{s} \partial_{q} \partial_{n} \partial_{{\ell}} \partial_{j} g \\
-\tfrac{1}{18} \partial_{s} \cP^{ij} \cP^{k{\ell}} \cP^{mn} \cP^{pq} \cP^{rs} \partial_{p} \partial_{m} \partial_{k} \partial_{i} f \partial_{r} \partial_{q} \partial_{n} \partial_{{\ell}} \partial_{j} g 
+\tfrac{1}{18} \partial_{s} \cP^{ij} \cP^{k{\ell}} \cP^{mn} \cP^{pq} \cP^{rs} \partial_{r} \partial_{p} \partial_{m} \partial_{k} \partial_{i} f \partial_{q} \partial_{n} \partial_{{\ell}} \partial_{j} g \\
+\tfrac{1}{18} \partial_{q} \cP^{ij} \partial_{s} \cP^{k{\ell}} \cP^{mn} \cP^{pq} \cP^{rs} \partial_{m} \partial_{k} \partial_{i} f \partial_{r} \partial_{p} \partial_{n} \partial_{{\ell}} \partial_{j} g 
+\tfrac{1}{18} \partial_{q} \cP^{ij} \partial_{s} \cP^{k{\ell}} \cP^{mn} \cP^{pq} \cP^{rs} \partial_{r} \partial_{p} \partial_{m} \partial_{k} \partial_{i} f \partial_{n} \partial_{{\ell}} \partial_{j} g \\
+\tfrac{1}{45} \partial_{n} \cP^{ij} \cP^{k{\ell}} \partial_{q} \cP^{mn} \partial_{s} \cP^{pq} \cP^{rs} \partial_{k} \partial_{i} f \partial_{r} \partial_{p} \partial_{m} \partial_{{\ell}} \partial_{j} g 
-\tfrac{1}{45} \partial_{n} \cP^{ij} \cP^{k{\ell}} \partial_{q} \cP^{mn} \partial_{s} \cP^{pq} \cP^{rs} \partial_{r} \partial_{p} \partial_{m} \partial_{k} \partial_{i} f \partial_{{\ell}} \partial_{j} g
\big)
\end{multline*}
}
{\footnotesize
\begin{gather*}
\\
+\hbar^{6}\big(
-\tfrac{2}{567} \partial_{u} \cP^{ij} \partial_{j} \cP^{k{\ell}} \partial_{{\ell}} \cP^{mn} \partial_{n} \cP^{pq} \partial_{q} \cP^{rs} \partial_{s} \cP^{tu} \partial_{m} \partial_{k} \partial_{i} f \partial_{t} \partial_{r} \partial_{p} g \\
+\tfrac{31}{3780} \partial_{u} \cP^{ij} \partial_{r} \cP^{k{\ell}} \partial_{{\ell}} \cP^{mn} \partial_{n} \cP^{pq} \partial_{q} \cP^{rs} \partial_{s} \cP^{tu} \partial_{p} \partial_{k} \partial_{i} f \partial_{t} \partial_{m} \partial_{j} g \\
-\tfrac{1}{360} \partial_{q} \cP^{ij} \partial_{j} \cP^{k{\ell}} \partial_{{\ell}} \cP^{mn} \partial_{n} \cP^{pq} \partial_{u} \cP^{rs} \partial_{s} \cP^{tu} \partial_{r} \partial_{k} \partial_{i} f \partial_{t} \partial_{p} \partial_{m} g \\
+\tfrac{7}{1620} \partial_{{\ell}} \cP^{ij} \partial_{q} \cP^{k{\ell}} \partial_{t} \cP^{mn} \partial_{n} \cP^{pq} \partial_{u} \cP^{rs} \partial_{s} \cP^{tu} \partial_{m} \partial_{k} \partial_{i} f \partial_{r} \partial_{p} \partial_{j} g \\
+\tfrac{11}{2160} \partial_{{\ell}} \cP^{ij} \partial_{t} \cP^{k{\ell}} \partial_{q} \cP^{mn} \partial_{n} \cP^{pq} \partial_{u} \cP^{rs} \partial_{s} \cP^{tu} \partial_{m} \partial_{k} \partial_{i} f \partial_{r} \partial_{p} \partial_{j} g \\
-\tfrac{1}{1296} \partial_{{\ell}} \cP^{ij} \partial_{j} \cP^{k{\ell}} \partial_{q} \cP^{mn} \partial_{n} \cP^{pq} \partial_{u} \cP^{rs} \partial_{s} \cP^{tu} \partial_{r} \partial_{m} \partial_{i} f \partial_{t} \partial_{p} \partial_{k} g \\
+\tfrac{4}{315} \partial_{u} \cP^{ij} \cP^{k{\ell}} \partial_{{\ell}} \cP^{mn} \partial_{n} \cP^{pq} \partial_{q} \cP^{rs} \partial_{s} \cP^{tu} \partial_{p} \partial_{k} \partial_{i} f \partial_{t} \partial_{r} \partial_{m} \partial_{j} g \\
+\tfrac{2}{105} \partial_{q} \cP^{ij} \partial_{t} \cP^{k{\ell}} \partial_{s} \cP^{mn} \partial_{n} \cP^{pq} \partial_{u} \cP^{rs} \cP^{tu} \partial_{m} \partial_{k} \partial_{i} f \partial_{r} \partial_{p} \partial_{{\ell}} \partial_{j} g \\
-\tfrac{2}{315} \partial_{s} \cP^{ij} \partial_{u} \cP^{k{\ell}} \partial_{{\ell}} \cP^{mn} \partial_{n} \cP^{pq} \partial_{q} \cP^{rs} \cP^{tu} \partial_{m} \partial_{k} \partial_{i} f \partial_{t} \partial_{r} \partial_{p} \partial_{j} g \\
-\tfrac{1}{180} \partial_{u} \cP^{ij} \partial_{s} \cP^{k{\ell}} \partial_{{\ell}} \cP^{mn} \partial_{n} \cP^{pq} \partial_{q} \cP^{rs} \cP^{tu} \partial_{m} \partial_{k} \partial_{i} f \partial_{t} \partial_{r} \partial_{p} \partial_{j} g \\
-\tfrac{4}{105} \partial_{q} \cP^{ij} \partial_{s} \cP^{k{\ell}} \partial_{{\ell}} \cP^{mn} \partial_{n} \cP^{pq} \partial_{u} \cP^{rs} \cP^{tu} \partial_{m} \partial_{k} \partial_{i} f \partial_{t} \partial_{r} \partial_{p} \partial_{j} g \\
+\tfrac{1}{270} \partial_{{\ell}} \cP^{ij} \cP^{k{\ell}} \partial_{u} \cP^{mn} \partial_{n} \cP^{pq} \partial_{q} \cP^{rs} \partial_{s} \cP^{tu} \partial_{m} \partial_{k} \partial_{i} f \partial_{t} \partial_{r} \partial_{p} \partial_{j} g \\
+\tfrac{2}{135} \partial_{{\ell}} \cP^{ij} \partial_{n} \cP^{k{\ell}} \partial_{q} \cP^{mn} \partial_{s} \cP^{pq} \partial_{u} \cP^{rs} \cP^{tu} \partial_{m} \partial_{k} \partial_{i} f \partial_{t} \partial_{r} \partial_{p} \partial_{j} g \\
+\tfrac{1}{45} \partial_{s} \cP^{ij} \partial_{t} \cP^{k{\ell}} \partial_{q} \cP^{mn} \partial_{n} \cP^{pq} \partial_{u} \cP^{rs} \cP^{tu} \partial_{m} \partial_{k} \partial_{i} f \partial_{r} \partial_{p} \partial_{{\ell}} \partial_{j} g \\
-\tfrac{11}{1080} \partial_{q} \cP^{ij} \partial_{r} \cP^{k{\ell}} \partial_{t} \cP^{mn} \cP^{pq} \partial_{u} \cP^{rs} \partial_{s} \cP^{tu} \partial_{m} \partial_{k} \partial_{i} f \partial_{p} \partial_{n} \partial_{{\ell}} \partial_{j} g \\
-\tfrac{2}{135} \partial_{u} \cP^{ij} \partial_{n} \cP^{k{\ell}} \partial_{{\ell}} \cP^{mn} \cP^{pq} \partial_{q} \cP^{rs} \partial_{s} \cP^{tu} \partial_{p} \partial_{k} \partial_{i} f \partial_{t} \partial_{r} \partial_{m} \partial_{j} g \\
-\tfrac{1}{216} \partial_{u} \cP^{ij} \partial_{n} \cP^{k{\ell}} \partial_{{\ell}} \cP^{mn} \partial_{s} \cP^{pq} \partial_{q} \cP^{rs} \cP^{tu} \partial_{p} \partial_{k} \partial_{i} f \partial_{t} \partial_{r} \partial_{m} \partial_{j} g \\
-\tfrac{4}{315} \partial_{{\ell}} \cP^{ij} \partial_{n} \cP^{k{\ell}} \partial_{s} \cP^{mn} \partial_{u} \cP^{pq} \partial_{q} \cP^{rs} \cP^{tu} \partial_{p} \partial_{m} \partial_{k} \partial_{i} f \partial_{t} \partial_{r} \partial_{j} g \\
+\tfrac{2}{105} \partial_{n} \cP^{ij} \partial_{t} \cP^{k{\ell}} \partial_{s} \cP^{mn} \partial_{u} \cP^{pq} \partial_{q} \cP^{rs} \cP^{tu} \partial_{p} \partial_{m} \partial_{k} \partial_{i} f \partial_{r} \partial_{{\ell}} \partial_{j} g \\
+\tfrac{2}{315} \partial_{{\ell}} \cP^{ij} \partial_{n} \cP^{k{\ell}} \partial_{u} \cP^{mn} \cP^{pq} \partial_{q} \cP^{rs} \partial_{s} \cP^{tu} \partial_{p} \partial_{m} \partial_{k} \partial_{i} f \partial_{t} \partial_{r} \partial_{j} g \\
+\tfrac{1}{180} \partial_{{\ell}} \cP^{ij} \cP^{k{\ell}} \partial_{u} \cP^{mn} \partial_{n} \cP^{pq} \partial_{q} \cP^{rs} \partial_{s} \cP^{tu} \partial_{p} \partial_{m} \partial_{k} \partial_{i} f \partial_{t} \partial_{r} \partial_{j} g \\
+\tfrac{4}{105} \partial_{{\ell}} \cP^{ij} \partial_{u} \cP^{k{\ell}} \cP^{mn} \partial_{n} \cP^{pq} \partial_{q} \cP^{rs} \partial_{s} \cP^{tu} \partial_{p} \partial_{m} \partial_{k} \partial_{i} f \partial_{t} \partial_{r} \partial_{j} g \\
-\tfrac{1}{270} \partial_{u} \cP^{ij} \partial_{s} \cP^{k{\ell}} \partial_{{\ell}} \cP^{mn} \partial_{n} \cP^{pq} \partial_{q} \cP^{rs} \cP^{tu} \partial_{p} \partial_{m} \partial_{k} \partial_{i} f \partial_{t} \partial_{r} \partial_{j} g \\
-\tfrac{2}{135} \partial_{u} \cP^{ij} \cP^{k{\ell}} \partial_{{\ell}} \cP^{mn} \partial_{n} \cP^{pq} \partial_{q} \cP^{rs} \partial_{s} \cP^{tu} \partial_{p} \partial_{m} \partial_{k} \partial_{i} f \partial_{t} \partial_{r} \partial_{j} g \\
+\tfrac{1}{45} \partial_{n} \cP^{ij} \partial_{t} \cP^{k{\ell}} \partial_{u} \cP^{mn} \partial_{s} \cP^{pq} \partial_{q} \cP^{rs} \cP^{tu} \partial_{p} \partial_{m} \partial_{k} \partial_{i} f \partial_{r} \partial_{{\ell}} \partial_{j} g \\
+\tfrac{11}{1080} \partial_{q} \cP^{ij} \partial_{r} \cP^{k{\ell}} \partial_{t} \cP^{mn} \cP^{pq} \partial_{u} \cP^{rs} \partial_{s} \cP^{tu} \partial_{p} \partial_{m} \partial_{k} \partial_{i} f \partial_{n} \partial_{{\ell}} \partial_{j} g \\
+\tfrac{2}{135} \partial_{{\ell}} \cP^{ij} \partial_{n} \cP^{k{\ell}} \partial_{u} \cP^{mn} \partial_{s} \cP^{pq} \partial_{q} \cP^{rs} \cP^{tu} \partial_{p} \partial_{m} \partial_{k} \partial_{i} f \partial_{t} \partial_{r} \partial_{j} g \\
+\tfrac{1}{216} \partial_{{\ell}} \cP^{ij} \cP^{k{\ell}} \partial_{q} \cP^{mn} \partial_{n} \cP^{pq} \partial_{u} \cP^{rs} \partial_{s} \cP^{tu} \partial_{r} \partial_{m} \partial_{k} \partial_{i} f \partial_{t} \partial_{p} \partial_{j} g \\
+\tfrac{1}{120} \cP^{ij} \cP^{k{\ell}} \partial_{u} \cP^{mn} \partial_{n} \cP^{pq} \partial_{q} \cP^{rs} \partial_{s} \cP^{tu} \partial_{p} \partial_{m} \partial_{k} \partial_{i} f \partial_{t} \partial_{r} \partial_{{\ell}} \partial_{j} g \\
-\tfrac{2}{45} \partial_{q} \cP^{ij} \partial_{s} \cP^{k{\ell}} \cP^{mn} \partial_{t} \cP^{pq} \partial_{u} \cP^{rs} \cP^{tu} \partial_{p} \partial_{m} \partial_{k} \partial_{i} f \partial_{r} \partial_{n} \partial_{{\ell}} \partial_{j} g \\
+\tfrac{2}{45} \partial_{n} \cP^{ij} \cP^{k{\ell}} \partial_{s} \cP^{mn} \partial_{u} \cP^{pq} \partial_{q} \cP^{rs} \cP^{tu} \partial_{p} \partial_{m} \partial_{k} \partial_{i} f \partial_{t} \partial_{r} \partial_{{\ell}} \partial_{j} g \\
+\tfrac{2}{45} \partial_{q} \cP^{ij} \cP^{k{\ell}} \partial_{u} \cP^{mn} \partial_{n} \cP^{pq} \cP^{rs} \partial_{s} \cP^{tu} \partial_{r} \partial_{m} \partial_{k} \partial_{i} f \partial_{t} \partial_{p} \partial_{{\ell}} \partial_{j} g \\
+\tfrac{1}{18} \partial_{n} \cP^{ij} \partial_{s} \cP^{k{\ell}} \partial_{u} \cP^{mn} \cP^{pq} \partial_{q} \cP^{rs} \cP^{tu} \partial_{p} \partial_{m} \partial_{k} \partial_{i} f \partial_{t} \partial_{r} \partial_{{\ell}} \partial_{j} g \\
-\tfrac{2}{45} \partial_{q} \cP^{ij} \partial_{s} \cP^{k{\ell}} \partial_{t} \cP^{mn} \cP^{pq} \partial_{u} \cP^{rs} \cP^{tu} \partial_{p} \partial_{m} \partial_{k} \partial_{i} f \partial_{r} \partial_{n} \partial_{{\ell}} \partial_{j} g \\
+\tfrac{2}{45} \partial_{q} \cP^{ij} \partial_{s} \cP^{k{\ell}} \partial_{t} \cP^{mn} \partial_{u} \cP^{pq} \cP^{rs} \cP^{tu} \partial_{p} \partial_{m} \partial_{k} \partial_{i} f \partial_{r} \partial_{n} \partial_{{\ell}} \partial_{j} g \\
+\tfrac{4}{135} \partial_{n} \cP^{ij} \partial_{u} \cP^{k{\ell}} \cP^{mn} \cP^{pq} \partial_{q} \cP^{rs} \partial_{s} \cP^{tu} \partial_{p} \partial_{m} \partial_{k} \partial_{i} f \partial_{t} \partial_{r} \partial_{{\ell}} \partial_{j} g \\
+\tfrac{4}{135} \partial_{n} \cP^{ij} \partial_{s} \cP^{k{\ell}} \partial_{q} \cP^{mn} \partial_{u} \cP^{pq} \cP^{rs} \cP^{tu} \partial_{p} \partial_{m} \partial_{k} \partial_{i} f \partial_{t} \partial_{r} \partial_{{\ell}} \partial_{j} g \\
+\tfrac{11}{720} \partial_{r} \cP^{ij} \partial_{t} \cP^{k{\ell}} \cP^{mn} \cP^{pq} \partial_{u} \cP^{rs} \partial_{s} \cP^{tu} \partial_{p} \partial_{m} \partial_{k} \partial_{i} f \partial_{q} \partial_{n} \partial_{{\ell}} \partial_{j} g \\
\end{gather*}
}
{\footnotesize
\begin{gather*}
+\tfrac{1}{18} \partial_{q} \cP^{ij} \cP^{k{\ell}} \cP^{mn} \partial_{n} \cP^{pq} \partial_{u} \cP^{rs} \partial_{s} \cP^{tu} \partial_{r} \partial_{m} \partial_{k} \partial_{i} f \partial_{t} \partial_{p} \partial_{{\ell}} \partial_{j} g \\
+\tfrac{1}{54} \partial_{n} \cP^{ij} \partial_{u} \cP^{k{\ell}} \cP^{mn} \partial_{s} \cP^{pq} \partial_{q} \cP^{rs} \cP^{tu} \partial_{p} \partial_{m} \partial_{k} \partial_{i} f \partial_{t} \partial_{r} \partial_{{\ell}} \partial_{j} g \\
+\tfrac{1}{144} \cP^{ij} \cP^{k{\ell}} \partial_{q} \cP^{mn} \partial_{n} \cP^{pq} \partial_{u} \cP^{rs} \partial_{s} \cP^{tu} \partial_{r} \partial_{m} \partial_{k} \partial_{i} f \partial_{t} \partial_{p} \partial_{{\ell}} \partial_{j} g \\
-\tfrac{1}{378} \partial_{u} \cP^{ij} \partial_{j} \cP^{k{\ell}} \partial_{{\ell}} \cP^{mn} \partial_{n} \cP^{pq} \partial_{q} \cP^{rs} \partial_{s} \cP^{tu} \partial_{k} \partial_{i} f \partial_{t} \partial_{r} \partial_{p} \partial_{m} g \\
+\tfrac{31}{11340} \partial_{{\ell}} \cP^{ij} \partial_{t} \cP^{k{\ell}} \partial_{u} \cP^{mn} \partial_{n} \cP^{pq} \partial_{q} \cP^{rs} \partial_{s} \cP^{tu} \partial_{k} \partial_{i} f \partial_{r} \partial_{p} \partial_{m} \partial_{j} g \\
-\tfrac{1}{540} \partial_{{\ell}} \cP^{ij} \partial_{j} \cP^{k{\ell}} \partial_{u} \cP^{mn} \partial_{n} \cP^{pq} \partial_{q} \cP^{rs} \partial_{s} \cP^{tu} \partial_{m} \partial_{i} f \partial_{t} \partial_{r} \partial_{p} \partial_{k} g \\
+\tfrac{1}{567} \partial_{n} \cP^{ij} \partial_{q} \cP^{k{\ell}} \partial_{r} \cP^{mn} \partial_{t} \cP^{pq} \partial_{u} \cP^{rs} \partial_{s} \cP^{tu} \partial_{k} \partial_{i} f \partial_{p} \partial_{m} \partial_{{\ell}} \partial_{j} g \\
-\tfrac{1}{378} \partial_{u} \cP^{ij} \partial_{j} \cP^{k{\ell}} \partial_{{\ell}} \cP^{mn} \partial_{n} \cP^{pq} \partial_{q} \cP^{rs} \partial_{s} \cP^{tu} \partial_{p} \partial_{m} \partial_{k} \partial_{i} f \partial_{t} \partial_{r} g \\
+\tfrac{31}{11340} \partial_{u} \cP^{ij} \partial_{r} \cP^{k{\ell}} \partial_{{\ell}} \cP^{mn} \partial_{n} \cP^{pq} \partial_{q} \cP^{rs} \partial_{s} \cP^{tu} \partial_{p} \partial_{m} \partial_{k} \partial_{i} f \partial_{t} \partial_{j} g \\
-\tfrac{1}{540} \partial_{q} \cP^{ij} \partial_{j} \cP^{k{\ell}} \partial_{{\ell}} \cP^{mn} \partial_{n} \cP^{pq} \partial_{u} \cP^{rs} \partial_{s} \cP^{tu} \partial_{r} \partial_{m} \partial_{k} \partial_{i} f \partial_{t} \partial_{p} g \\
+\tfrac{1}{567} \partial_{n} \cP^{ij} \partial_{q} \cP^{k{\ell}} \partial_{r} \cP^{mn} \partial_{t} \cP^{pq} \partial_{u} \cP^{rs} \partial_{s} \cP^{tu} \partial_{p} \partial_{m} \partial_{k} \partial_{i} f \partial_{{\ell}} \partial_{j} g \\
-\tfrac{1}{18} \partial_{s} \cP^{ij} \partial_{u} \cP^{k{\ell}} \cP^{mn} \cP^{pq} \cP^{rs} \cP^{tu} \partial_{r} \partial_{p} \partial_{m} \partial_{k} \partial_{i} f \partial_{t} \partial_{q} \partial_{n} \partial_{{\ell}} \partial_{j} g \\
-\tfrac{1}{144} \cP^{ij} \cP^{k{\ell}} \cP^{mn} \cP^{pq} \partial_{u} \cP^{rs} \partial_{s} \cP^{tu} \partial_{r} \partial_{p} \partial_{m} \partial_{k} \partial_{i} f \partial_{t} \partial_{q} \partial_{n} \partial_{{\ell}} \partial_{j} g \\
-\tfrac{1}{18} \partial_{s} \cP^{ij} \cP^{k{\ell}} \cP^{mn} \cP^{pq} \partial_{u} \cP^{rs} \cP^{tu} \partial_{r} \partial_{p} \partial_{m} \partial_{k} \partial_{i} f \partial_{t} \partial_{q} \partial_{n} \partial_{{\ell}} \partial_{j} g \\
-\tfrac{1}{15} \partial_{s} \cP^{ij} \partial_{t} \cP^{k{\ell}} \cP^{mn} \cP^{pq} \partial_{u} \cP^{rs} \cP^{tu} \partial_{p} \partial_{m} \partial_{k} \partial_{i} f \partial_{r} \partial_{q} \partial_{n} \partial_{{\ell}} \partial_{j} g \\
+\tfrac{2}{45} \partial_{u} \cP^{ij} \cP^{k{\ell}} \cP^{mn} \cP^{pq} \partial_{q} \cP^{rs} \partial_{s} \cP^{tu} \partial_{p} \partial_{m} \partial_{k} \partial_{i} f \partial_{t} \partial_{r} \partial_{n} \partial_{{\ell}} \partial_{j} g \\
+\tfrac{1}{9} \partial_{q} \cP^{ij} \partial_{s} \cP^{k{\ell}} \cP^{mn} \partial_{u} \cP^{pq} \cP^{rs} \cP^{tu} \partial_{p} \partial_{m} \partial_{k} \partial_{i} f \partial_{t} \partial_{r} \partial_{n} \partial_{{\ell}} \partial_{j} g \\
+\tfrac{1}{54} \partial_{q} \cP^{ij} \partial_{s} \cP^{k{\ell}} \partial_{u} \cP^{mn} \cP^{pq} \cP^{rs} \cP^{tu} \partial_{p} \partial_{m} \partial_{k} \partial_{i} f \partial_{t} \partial_{r} \partial_{n} \partial_{{\ell}} \partial_{j} g \\
+\tfrac{1}{36} \partial_{u} \cP^{ij} \cP^{k{\ell}} \cP^{mn} \partial_{s} \cP^{pq} \partial_{q} \cP^{rs} \cP^{tu} \partial_{p} \partial_{m} \partial_{k} \partial_{i} f \partial_{t} \partial_{r} \partial_{n} \partial_{{\ell}} \partial_{j} g \\
-\tfrac{1}{15} \partial_{s} \cP^{ij} \partial_{t} \cP^{k{\ell}} \cP^{mn} \cP^{pq} \partial_{u} \cP^{rs} \cP^{tu} \partial_{r} \partial_{p} \partial_{m} \partial_{k} \partial_{i} f \partial_{q} \partial_{n} \partial_{{\ell}} \partial_{j} g \\
-\tfrac{2}{45} \partial_{q} \cP^{ij} \cP^{k{\ell}} \cP^{mn} \partial_{s} \cP^{pq} \partial_{u} \cP^{rs} \cP^{tu} \partial_{r} \partial_{p} \partial_{m} \partial_{k} \partial_{i} f \partial_{t} \partial_{n} \partial_{{\ell}} \partial_{j} g \\
-\tfrac{1}{9} \partial_{q} \cP^{ij} \partial_{u} \cP^{k{\ell}} \cP^{mn} \cP^{pq} \cP^{rs} \partial_{s} \cP^{tu} \partial_{r} \partial_{p} \partial_{m} \partial_{k} \partial_{i} f \partial_{t} \partial_{n} \partial_{{\ell}} \partial_{j} g \\
-\tfrac{1}{54} \partial_{q} \cP^{ij} \partial_{s} \cP^{k{\ell}} \partial_{u} \cP^{mn} \cP^{pq} \cP^{rs} \cP^{tu} \partial_{r} \partial_{p} \partial_{m} \partial_{k} \partial_{i} f \partial_{t} \partial_{n} \partial_{{\ell}} \partial_{j} g \\
-\tfrac{1}{36} \partial_{q} \cP^{ij} \cP^{k{\ell}} \cP^{mn} \cP^{pq} \partial_{u} \cP^{rs} \partial_{s} \cP^{tu} \partial_{r} \partial_{p} \partial_{m} \partial_{k} \partial_{i} f \partial_{t} \partial_{n} \partial_{{\ell}} \partial_{j} g \\
+\tfrac{1}{180} \cP^{ij} \cP^{k{\ell}} \partial_{u} \cP^{mn} \partial_{n} \cP^{pq} \partial_{q} \cP^{rs} \partial_{s} \cP^{tu} \partial_{m} \partial_{k} \partial_{i} f \partial_{t} \partial_{r} \partial_{p} \partial_{{\ell}} \partial_{j} g \\
+\tfrac{2}{45} \partial_{q} \cP^{ij} \partial_{s} \cP^{k{\ell}} \partial_{t} \cP^{mn} \partial_{u} \cP^{pq} \cP^{rs} \cP^{tu} \partial_{m} \partial_{k} \partial_{i} f \partial_{r} \partial_{p} \partial_{n} \partial_{{\ell}} \partial_{j} g \\
-\tfrac{4}{135} \partial_{s} \cP^{ij} \partial_{u} \cP^{k{\ell}} \cP^{mn} \partial_{n} \cP^{pq} \partial_{q} \cP^{rs} \cP^{tu} \partial_{m} \partial_{k} \partial_{i} f \partial_{t} \partial_{r} \partial_{p} \partial_{{\ell}} \partial_{j} g \\
+\tfrac{1}{135} \partial_{n} \cP^{ij} \partial_{q} \cP^{k{\ell}} \cP^{mn} \partial_{s} \cP^{pq} \partial_{u} \cP^{rs} \cP^{tu} \partial_{m} \partial_{k} \partial_{i} f \partial_{t} \partial_{r} \partial_{p} \partial_{{\ell}} \partial_{j} g \\
-\tfrac{1}{108} \partial_{s} \cP^{ij} \partial_{u} \cP^{k{\ell}} \partial_{q} \cP^{mn} \partial_{n} \cP^{pq} \cP^{rs} \cP^{tu} \partial_{m} \partial_{k} \partial_{i} f \partial_{t} \partial_{r} \partial_{p} \partial_{{\ell}} \partial_{j} g \\
+\tfrac{2}{45} \partial_{s} \cP^{ij} \cP^{k{\ell}} \partial_{u} \cP^{mn} \partial_{n} \cP^{pq} \partial_{q} \cP^{rs} \cP^{tu} \partial_{m} \partial_{k} \partial_{i} f \partial_{t} \partial_{r} \partial_{p} \partial_{{\ell}} \partial_{j} g \\
-\tfrac{1}{45} \partial_{q} \cP^{ij} \cP^{k{\ell}} \partial_{s} \cP^{mn} \partial_{n} \cP^{pq} \partial_{u} \cP^{rs} \cP^{tu} \partial_{m} \partial_{k} \partial_{i} f \partial_{t} \partial_{r} \partial_{p} \partial_{{\ell}} \partial_{j} g \\
+\tfrac{1}{180} \cP^{ij} \cP^{k{\ell}} \partial_{u} \cP^{mn} \partial_{n} \cP^{pq} \partial_{q} \cP^{rs} \partial_{s} \cP^{tu} \partial_{r} \partial_{p} \partial_{m} \partial_{k} \partial_{i} f \partial_{t} \partial_{{\ell}} \partial_{j} g \\
-\tfrac{2}{45} \partial_{q} \cP^{ij} \partial_{s} \cP^{k{\ell}} \partial_{t} \cP^{mn} \partial_{u} \cP^{pq} \cP^{rs} \cP^{tu} \partial_{r} \partial_{p} \partial_{m} \partial_{k} \partial_{i} f \partial_{n} \partial_{{\ell}} \partial_{j} g \\
-\tfrac{4}{135} \partial_{n} \cP^{ij} \partial_{q} \cP^{k{\ell}} \partial_{s} \cP^{mn} \cP^{pq} \partial_{u} \cP^{rs} \cP^{tu} \partial_{r} \partial_{p} \partial_{m} \partial_{k} \partial_{i} f \partial_{t} \partial_{{\ell}} \partial_{j} g \\
+\tfrac{1}{135} \partial_{n} \cP^{ij} \partial_{u} \cP^{k{\ell}} \partial_{q} \cP^{mn} \partial_{s} \cP^{pq} \cP^{rs} \cP^{tu} \partial_{r} \partial_{p} \partial_{m} \partial_{k} \partial_{i} f \partial_{t} \partial_{{\ell}} \partial_{j} g \\
-\tfrac{1}{108} \partial_{n} \cP^{ij} \partial_{q} \cP^{k{\ell}} \cP^{mn} \cP^{pq} \partial_{u} \cP^{rs} \partial_{s} \cP^{tu} \partial_{r} \partial_{p} \partial_{m} \partial_{k} \partial_{i} f \partial_{t} \partial_{{\ell}} \partial_{j} g \\
-\tfrac{1}{45} \partial_{n} \cP^{ij} \cP^{k{\ell}} \partial_{u} \cP^{mn} \cP^{pq} \partial_{q} \cP^{rs} \partial_{s} \cP^{tu} \partial_{r} \partial_{p} \partial_{m} \partial_{k} \partial_{i} f \partial_{t} \partial_{{\ell}} \partial_{j} g \\
+\tfrac{2}{45} \partial_{n} \cP^{ij} \cP^{k{\ell}} \partial_{q} \cP^{mn} \partial_{u} \cP^{pq} \cP^{rs} \partial_{s} \cP^{tu} \partial_{r} \partial_{p} \partial_{m} \partial_{k} \partial_{i} f \partial_{t} \partial_{{\ell}} \partial_{j} g \\
-\tfrac{1}{270} \partial_{u} \cP^{ij} \partial_{s} \cP^{k{\ell}} \partial_{{\ell}} \cP^{mn} \partial_{n} \cP^{pq} \partial_{q} \cP^{rs} \cP^{tu} \partial_{k} \partial_{i} f \partial_{t} \partial_{r} \partial_{p} \partial_{m} \partial_{j} g
\end{gather*}
}
{\footnotesize
\begin{gather*}
-\tfrac{1}{270} \partial_{q} \cP^{ij} \partial_{n} \cP^{k{\ell}} \partial_{{\ell}} \cP^{mn} \partial_{s} \cP^{pq} \partial_{u} \cP^{rs} \cP^{tu} \partial_{k} \partial_{i} f \partial_{t} \partial_{r} \partial_{p} \partial_{m} \partial_{j} g \\
+\tfrac{2}{315} \partial_{n} \cP^{ij} \partial_{t} \cP^{k{\ell}} \partial_{q} \cP^{mn} \partial_{s} \cP^{pq} \partial_{u} \cP^{rs} \cP^{tu} \partial_{k} \partial_{i} f \partial_{r} \partial_{p} \partial_{m} \partial_{{\ell}} \partial_{j} g \\
-\tfrac{4}{315} \partial_{q} \cP^{ij} \partial_{s} \cP^{k{\ell}} \partial_{{\ell}} \cP^{mn} \partial_{n} \cP^{pq} \partial_{u} \cP^{rs} \cP^{tu} \partial_{k} \partial_{i} f \partial_{t} \partial_{r} \partial_{p} \partial_{m} \partial_{j} g \\
+\tfrac{1}{270} \partial_{{\ell}} \cP^{ij} \cP^{k{\ell}} \partial_{u} \cP^{mn} \partial_{n} \cP^{pq} \partial_{q} \cP^{rs} \partial_{s} \cP^{tu} \partial_{r} \partial_{p} \partial_{m} \partial_{k} \partial_{i} f \partial_{t} \partial_{j} g \\
+\tfrac{1}{270} \partial_{{\ell}} \cP^{ij} \partial_{n} \cP^{k{\ell}} \partial_{q} \cP^{mn} \cP^{pq} \partial_{u} \cP^{rs} \partial_{s} \cP^{tu} \partial_{r} \partial_{p} \partial_{m} \partial_{k} \partial_{i} f \partial_{t} \partial_{j} g \\
+\tfrac{2}{315} \partial_{n} \cP^{ij} \partial_{t} \cP^{k{\ell}} \partial_{q} \cP^{mn} \partial_{s} \cP^{pq} \partial_{u} \cP^{rs} \cP^{tu} \partial_{r} \partial_{p} \partial_{m} \partial_{k} \partial_{i} f \partial_{{\ell}} \partial_{j} g \\
+\tfrac{4}{315} \partial_{{\ell}} \cP^{ij} \partial_{n} \cP^{k{\ell}} \partial_{u} \cP^{mn} \cP^{pq} \partial_{q} \cP^{rs} \partial_{s} \cP^{tu} \partial_{r} \partial_{p} \partial_{m} \partial_{k} \partial_{i} f \partial_{t} \partial_{j} g \\
+\tfrac{1}{720} \cP^{ij} \cP^{k{\ell}} \cP^{mn} \cP^{pq} \cP^{rs} \cP^{tu} \partial_{t} \partial_{r} \partial_{p} \partial_{m} \partial_{k} \partial_{i} f \partial_{u} \partial_{s} \partial_{q} \partial_{n} \partial_{{\ell}} \partial_{j} g \\
-\tfrac{1}{72} \partial_{u} \cP^{ij} \cP^{k{\ell}} \cP^{mn} \cP^{pq} \cP^{rs} \cP^{tu} \partial_{r} \partial_{p} \partial_{m} \partial_{k} \partial_{i} f \partial_{t} \partial_{s} \partial_{q} \partial_{n} \partial_{{\ell}} \partial_{j} g \\
+\tfrac{1}{72} \partial_{u} \cP^{ij} \cP^{k{\ell}} \cP^{mn} \cP^{pq} \cP^{rs} \cP^{tu} \partial_{t} \partial_{r} \partial_{p} \partial_{m} \partial_{k} \partial_{i} f \partial_{s} \partial_{q} \partial_{n} \partial_{{\ell}} \partial_{j} g \\
+\tfrac{1}{36} \partial_{s} \cP^{ij} \partial_{u} \cP^{k{\ell}} \cP^{mn} \cP^{pq} \cP^{rs} \cP^{tu} \partial_{p} \partial_{m} \partial_{k} \partial_{i} f \partial_{t} \partial_{r} \partial_{q} \partial_{n} \partial_{{\ell}} \partial_{j} g \\
+\tfrac{1}{36} \partial_{s} \cP^{ij} \partial_{u} \cP^{k{\ell}} \cP^{mn} \cP^{pq} \cP^{rs} \cP^{tu} \partial_{t} \partial_{r} \partial_{p} \partial_{m} \partial_{k} \partial_{i} f \partial_{q} \partial_{n} \partial_{{\ell}} \partial_{j} g \\
+\tfrac{1}{90} \partial_{q} \cP^{ij} \cP^{k{\ell}} \cP^{mn} \partial_{s} \cP^{pq} \partial_{u} \cP^{rs} \cP^{tu} \partial_{m} \partial_{k} \partial_{i} f \partial_{t} \partial_{r} \partial_{p} \partial_{n} \partial_{{\ell}} \partial_{j} g \\
-\tfrac{1}{162} \partial_{q} \cP^{ij} \partial_{s} \cP^{k{\ell}} \partial_{u} \cP^{mn} \cP^{pq} \cP^{rs} \cP^{tu} \partial_{m} \partial_{k} \partial_{i} f \partial_{t} \partial_{r} \partial_{p} \partial_{n} \partial_{{\ell}} \partial_{j} g \\
-\tfrac{1}{90} \partial_{q} \cP^{ij} \cP^{k{\ell}} \cP^{mn} \partial_{s} \cP^{pq} \partial_{u} \cP^{rs} \cP^{tu} \partial_{t} \partial_{r} \partial_{p} \partial_{m} \partial_{k} \partial_{i} f \partial_{n} \partial_{{\ell}} \partial_{j} g \\
+\tfrac{1}{162} \partial_{q} \cP^{ij} \partial_{s} \cP^{k{\ell}} \partial_{u} \cP^{mn} \cP^{pq} \cP^{rs} \cP^{tu} \partial_{t} \partial_{r} \partial_{p} \partial_{m} \partial_{k} \partial_{i} f \partial_{n} \partial_{{\ell}} \partial_{j} g \\
-\tfrac{2}{945} \partial_{{\ell}} \cP^{ij} \partial_{n} \cP^{k{\ell}} \partial_{q} \cP^{mn} \partial_{s} \cP^{pq} \partial_{u} \cP^{rs} \cP^{tu} \partial_{i} f \partial_{t} \partial_{r} \partial_{p} \partial_{m} \partial_{k} \partial_{j} g \\
+\tfrac{2}{945} \partial_{{\ell}} \cP^{ij} \partial_{n} \cP^{k{\ell}} \partial_{q} \cP^{mn} \partial_{s} \cP^{pq} \partial_{u} \cP^{rs} \cP^{tu} \partial_{t} \partial_{r} \partial_{p} \partial_{m} \partial_{k} \partial_{i} f \partial_{j} g \\
-\tfrac{1}{135} \partial_{n} \cP^{ij} \partial_{q} \cP^{k{\ell}} \partial_{s} \cP^{mn} \cP^{pq} \partial_{u} \cP^{rs} \cP^{tu} \partial_{k} \partial_{i} f \partial_{t} \partial_{r} \partial_{p} \partial_{m} \partial_{{\ell}} \partial_{j} g \\
-\tfrac{1}{135} \partial_{n} \cP^{ij} \partial_{q} \cP^{k{\ell}} \partial_{s} \cP^{mn} \cP^{pq} \partial_{u} \cP^{rs} \cP^{tu} \partial_{t} \partial_{r} \partial_{p} \partial_{m} \partial_{k} \partial_{i} f \partial_{{\ell}} \partial_{j} g \\
-\tfrac{1}{945} \partial_{u} \cP^{ij} \partial_{j} \cP^{k{\ell}} \partial_{{\ell}} \cP^{mn} \partial_{n} \cP^{pq} \partial_{q} \cP^{rs} \partial_{s} \cP^{tu} \partial_{i} f \partial_{t} \partial_{r} \partial_{p} \partial_{m} \partial_{k} g \\
-\tfrac{1}{945} \partial_{u} \cP^{ij} \partial_{j} \cP^{k{\ell}} \partial_{{\ell}} \cP^{mn} \partial_{n} \cP^{pq} \partial_{q} \cP^{rs} \partial_{s} \cP^{tu} \partial_{r} \partial_{p} \partial_{m} \partial_{k} \partial_{i} f \partial_{t} g
\big)\\
+\hbar^{7}\big(
-\tfrac{1}{180} \partial_{{\ell}} \cP^{ij} \partial_{w} \cP^{k{\ell}} \partial_{u} \cP^{mn} \partial_{n} \cP^{pq} \partial_{q} \cP^{rs} \partial_{s} \cP^{tu} \cP^{vw} \partial_{p} \partial_{m} \partial_{k} \partial_{i} f \partial_{v} \partial_{t} \partial_{r} \partial_{j} g \\
+\tfrac{53}{11340} \cP^{ij} \partial_{w} \cP^{k{\ell}} \partial_{{\ell}} \cP^{mn} \partial_{n} \cP^{pq} \partial_{q} \cP^{rs} \partial_{s} \cP^{tu} \partial_{u} \cP^{vw} \partial_{p} \partial_{m} \partial_{k} \partial_{i} f \partial_{v} \partial_{t} \partial_{r} \partial_{j} g \\
-\tfrac{31}{3780} \cP^{ij} \partial_{w} \cP^{k{\ell}} \partial_{{\ell}} \cP^{mn} \partial_{n} \cP^{pq} \partial_{q} \cP^{rs} \partial_{s} \cP^{tu} \partial_{u} \cP^{vw} \partial_{r} \partial_{m} \partial_{k} \partial_{i} f \partial_{v} \partial_{t} \partial_{p} \partial_{j} g \\
+\tfrac{2}{135} \partial_{{\ell}} \cP^{ij} \partial_{n} \cP^{k{\ell}} \partial_{s} \cP^{mn} \partial_{u} \cP^{pq} \partial_{q} \cP^{rs} \partial_{w} \cP^{tu} \cP^{vw} \partial_{p} \partial_{m} \partial_{k} \partial_{i} f \partial_{v} \partial_{t} \partial_{r} \partial_{j} g \\
-\tfrac{8}{945} \partial_{{\ell}} \cP^{ij} \partial_{w} \cP^{k{\ell}} \cP^{mn} \partial_{n} \cP^{pq} \partial_{q} \cP^{rs} \partial_{s} \cP^{tu} \partial_{u} \cP^{vw} \partial_{p} \partial_{m} \partial_{k} \partial_{i} f \partial_{v} \partial_{t} \partial_{r} \partial_{j} g \\
-\tfrac{29}{945} \partial_{s} \cP^{ij} \partial_{u} \cP^{k{\ell}} \partial_{{\ell}} \cP^{mn} \partial_{n} \cP^{pq} \partial_{q} \cP^{rs} \partial_{w} \cP^{tu} \cP^{vw} \partial_{p} \partial_{m} \partial_{k} \partial_{i} f \partial_{v} \partial_{t} \partial_{r} \partial_{j} g \\
+\tfrac{46}{945} \partial_{n} \cP^{ij} \partial_{q} \cP^{k{\ell}} \partial_{s} \cP^{mn} \partial_{v} \cP^{pq} \partial_{u} \cP^{rs} \partial_{w} \cP^{tu} \cP^{vw} \partial_{p} \partial_{m} \partial_{k} \partial_{i} f \partial_{t} \partial_{r} \partial_{{\ell}} \partial_{j} g \\
+\tfrac{32}{945} \partial_{s} \cP^{ij} \partial_{u} \cP^{k{\ell}} \partial_{v} \cP^{mn} \partial_{n} \cP^{pq} \partial_{q} \cP^{rs} \partial_{w} \cP^{tu} \cP^{vw} \partial_{p} \partial_{m} \partial_{k} \partial_{i} f \partial_{t} \partial_{r} \partial_{{\ell}} \partial_{j} g \\
-\tfrac{8}{315} \partial_{n} \cP^{ij} \partial_{u} \cP^{k{\ell}} \partial_{r} \cP^{mn} \partial_{w} \cP^{pq} \partial_{q} \cP^{rs} \partial_{s} \cP^{tu} \cP^{vw} \partial_{p} \partial_{m} \partial_{k} \partial_{i} f \partial_{v} \partial_{t} \partial_{{\ell}} \partial_{j} g \\
+\tfrac{191}{22680} \partial_{n} \cP^{ij} \cP^{k{\ell}} \partial_{w} \cP^{mn} \partial_{r} \cP^{pq} \partial_{q} \cP^{rs} \partial_{s} \cP^{tu} \partial_{u} \cP^{vw} \partial_{p} \partial_{m} \partial_{k} \partial_{i} f \partial_{v} \partial_{t} \partial_{{\ell}} \partial_{j} g \\
+\tfrac{11}{1080} \partial_{n} \cP^{ij} \partial_{s} \cP^{k{\ell}} \partial_{v} \cP^{mn} \cP^{pq} \partial_{q} \cP^{rs} \partial_{w} \cP^{tu} \partial_{u} \cP^{vw} \partial_{p} \partial_{m} \partial_{k} \partial_{i} f \partial_{t} \partial_{r} \partial_{{\ell}} \partial_{j} g \\
-\tfrac{1}{360} \cP^{ij} \partial_{s} \cP^{k{\ell}} \partial_{{\ell}} \cP^{mn} \partial_{n} \cP^{pq} \partial_{q} \cP^{rs} \partial_{w} \cP^{tu} \partial_{u} \cP^{vw} \partial_{t} \partial_{m} \partial_{k} \partial_{i} f \partial_{v} \partial_{r} \partial_{p} \partial_{j} g \\
-\tfrac{1}{135} \partial_{s} \cP^{ij} \cP^{k{\ell}} \partial_{{\ell}} \cP^{mn} \partial_{n} \cP^{pq} \partial_{q} \cP^{rs} \partial_{w} \cP^{tu} \partial_{u} \cP^{vw} \partial_{t} \partial_{m} \partial_{k} \partial_{i} f \partial_{v} \partial_{r} \partial_{p} \partial_{j} g \\
-\tfrac{1}{135} \partial_{{\ell}} \cP^{ij} \partial_{n} \cP^{k{\ell}} \partial_{u} \cP^{mn} \partial_{s} \cP^{pq} \partial_{q} \cP^{rs} \partial_{w} \cP^{tu} \cP^{vw} \partial_{p} \partial_{m} \partial_{k} \partial_{i} f \partial_{v} \partial_{t} \partial_{r} \partial_{j} g
\end{gather*}
}
{\footnotesize
\begin{gather*}
+\tfrac{1}{45} \partial_{n} \cP^{ij} \partial_{u} \cP^{k{\ell}} \partial_{v} \cP^{mn} \partial_{s} \cP^{pq} \partial_{q} \cP^{rs} \partial_{w} \cP^{tu} \cP^{vw} \partial_{p} \partial_{m} \partial_{k} \partial_{i} f \partial_{t} \partial_{r} \partial_{{\ell}} \partial_{j} g \\
-\tfrac{11}{2160} \partial_{t} \cP^{ij} \partial_{v} \cP^{k{\ell}} \cP^{mn} \partial_{s} \cP^{pq} \partial_{q} \cP^{rs} \partial_{w} \cP^{tu} \partial_{u} \cP^{vw} \partial_{p} \partial_{m} \partial_{k} \partial_{i} f \partial_{r} \partial_{n} \partial_{{\ell}} \partial_{j} g \\
-\tfrac{1}{216} \partial_{n} \cP^{ij} \cP^{k{\ell}} \partial_{{\ell}} \cP^{mn} \partial_{s} \cP^{pq} \partial_{q} \cP^{rs} \partial_{w} \cP^{tu} \partial_{u} \cP^{vw} \partial_{t} \partial_{p} \partial_{k} \partial_{i} f \partial_{v} \partial_{r} \partial_{m} \partial_{j} g \\
-\tfrac{1}{1296} \cP^{ij} \partial_{n} \cP^{k{\ell}} \partial_{{\ell}} \cP^{mn} \partial_{s} \cP^{pq} \partial_{q} \cP^{rs} \partial_{w} \cP^{tu} \partial_{u} \cP^{vw} \partial_{t} \partial_{p} \partial_{k} \partial_{i} f \partial_{v} \partial_{r} \partial_{m} \partial_{j} g \\
-\tfrac{31}{22680} \partial_{r} \cP^{ij} \partial_{t} \cP^{k{\ell}} \partial_{v} \cP^{mn} \cP^{pq} \partial_{w} \cP^{rs} \partial_{s} \cP^{tu} \partial_{u} \cP^{vw} \partial_{p} \partial_{m} \partial_{k} \partial_{i} f \partial_{q} \partial_{n} \partial_{{\ell}} \partial_{j} g \\
+\tfrac{16}{945} \partial_{n} \cP^{ij} \partial_{s} \cP^{k{\ell}} \partial_{q} \cP^{mn} \partial_{v} \cP^{pq} \partial_{u} \cP^{rs} \partial_{w} \cP^{tu} \cP^{vw} \partial_{p} \partial_{m} \partial_{k} \partial_{i} f \partial_{t} \partial_{r} \partial_{{\ell}} \partial_{j} g \\
+\tfrac{16}{945} \partial_{q} \cP^{ij} \partial_{s} \cP^{k{\ell}} \partial_{t} \cP^{mn} \partial_{v} \cP^{pq} \partial_{w} \cP^{rs} \cP^{tu} \partial_{u} \cP^{vw} \partial_{p} \partial_{m} \partial_{k} \partial_{i} f \partial_{r} \partial_{n} \partial_{{\ell}} \partial_{j} g \\
+\tfrac{1}{54} \partial_{q} \cP^{ij} \partial_{w} \cP^{k{\ell}} \cP^{mn} \cP^{pq} \partial_{u} \cP^{rs} \partial_{s} \cP^{tu} \cP^{vw} \partial_{r} \partial_{p} \partial_{m} \partial_{k} \partial_{i} f \partial_{v} \partial_{t} \partial_{n} \partial_{{\ell}} \partial_{j} g \\
+\tfrac{1}{432} \cP^{ij} \cP^{k{\ell}} \cP^{mn} \partial_{s} \cP^{pq} \partial_{q} \cP^{rs} \partial_{w} \cP^{tu} \partial_{u} \cP^{vw} \partial_{t} \partial_{p} \partial_{m} \partial_{k} \partial_{i} f \partial_{v} \partial_{r} \partial_{n} \partial_{{\ell}} \partial_{j} g \\
+\tfrac{1}{360} \cP^{ij} \cP^{k{\ell}} \cP^{mn} \partial_{w} \cP^{pq} \partial_{q} \cP^{rs} \partial_{s} \cP^{tu} \partial_{u} \cP^{vw} \partial_{t} \partial_{p} \partial_{m} \partial_{k} \partial_{i} f \partial_{v} \partial_{r} \partial_{n} \partial_{{\ell}} \partial_{j} g \\
-\tfrac{1}{135} \partial_{t} \cP^{ij} \cP^{k{\ell}} \cP^{mn} \cP^{pq} \partial_{w} \cP^{rs} \partial_{s} \cP^{tu} \partial_{u} \cP^{vw} \partial_{r} \partial_{p} \partial_{m} \partial_{k} \partial_{i} f \partial_{v} \partial_{q} \partial_{n} \partial_{{\ell}} \partial_{j} g \\
+\tfrac{1}{15} \partial_{u} \cP^{ij} \cP^{k{\ell}} \cP^{mn} \partial_{w} \cP^{pq} \partial_{q} \cP^{rs} \partial_{s} \cP^{tu} \cP^{vw} \partial_{r} \partial_{p} \partial_{m} \partial_{k} \partial_{i} f \partial_{v} \partial_{t} \partial_{n} \partial_{{\ell}} \partial_{j} g \\
-\tfrac{1}{45} \partial_{q} \cP^{ij} \cP^{k{\ell}} \cP^{mn} \partial_{s} \cP^{pq} \partial_{u} \cP^{rs} \partial_{w} \cP^{tu} \cP^{vw} \partial_{r} \partial_{p} \partial_{m} \partial_{k} \partial_{i} f \partial_{v} \partial_{t} \partial_{n} \partial_{{\ell}} \partial_{j} g \\
+\tfrac{1}{18} \partial_{q} \cP^{ij} \partial_{u} \cP^{k{\ell}} \cP^{mn} \partial_{w} \cP^{pq} \cP^{rs} \partial_{s} \cP^{tu} \cP^{vw} \partial_{r} \partial_{p} \partial_{m} \partial_{k} \partial_{i} f \partial_{v} \partial_{t} \partial_{n} \partial_{{\ell}} \partial_{j} g \\
+\tfrac{4}{135} \partial_{u} \cP^{ij} \partial_{w} \cP^{k{\ell}} \cP^{mn} \cP^{pq} \partial_{q} \cP^{rs} \partial_{s} \cP^{tu} \cP^{vw} \partial_{r} \partial_{p} \partial_{m} \partial_{k} \partial_{i} f \partial_{v} \partial_{t} \partial_{n} \partial_{{\ell}} \partial_{j} g \\
+\tfrac{2}{135} \partial_{s} \cP^{ij} \partial_{u} \cP^{k{\ell}} \partial_{v} \cP^{mn} \cP^{pq} \partial_{w} \cP^{rs} \cP^{tu} \cP^{vw} \partial_{r} \partial_{p} \partial_{m} \partial_{k} \partial_{i} f \partial_{t} \partial_{q} \partial_{n} \partial_{{\ell}} \partial_{j} g \\
-\tfrac{2}{135} \partial_{s} \cP^{ij} \partial_{u} \cP^{k{\ell}} \partial_{v} \cP^{mn} \cP^{pq} \cP^{rs} \partial_{w} \cP^{tu} \cP^{vw} \partial_{r} \partial_{p} \partial_{m} \partial_{k} \partial_{i} f \partial_{t} \partial_{q} \partial_{n} \partial_{{\ell}} \partial_{j} g \\
+\tfrac{4}{135} \partial_{q} \cP^{ij} \partial_{s} \cP^{k{\ell}} \cP^{mn} \partial_{u} \cP^{pq} \cP^{rs} \partial_{w} \cP^{tu} \cP^{vw} \partial_{r} \partial_{p} \partial_{m} \partial_{k} \partial_{i} f \partial_{v} \partial_{t} \partial_{n} \partial_{{\ell}} \partial_{j} g \\
+\tfrac{1}{27} \partial_{q} \cP^{ij} \partial_{s} \cP^{k{\ell}} \partial_{u} \cP^{mn} \partial_{w} \cP^{pq} \cP^{rs} \cP^{tu} \cP^{vw} \partial_{r} \partial_{p} \partial_{m} \partial_{k} \partial_{i} f \partial_{v} \partial_{t} \partial_{n} \partial_{{\ell}} \partial_{j} g \\
+\tfrac{1}{36} \partial_{q} \cP^{ij} \cP^{k{\ell}} \cP^{mn} \partial_{w} \cP^{pq} \partial_{u} \cP^{rs} \partial_{s} \cP^{tu} \cP^{vw} \partial_{r} \partial_{p} \partial_{m} \partial_{k} \partial_{i} f \partial_{v} \partial_{t} \partial_{n} \partial_{{\ell}} \partial_{j} g \\
+\tfrac{4}{315} \partial_{w} \cP^{ij} \cP^{k{\ell}} \cP^{mn} \partial_{n} \cP^{pq} \partial_{q} \cP^{rs} \partial_{s} \cP^{tu} \partial_{u} \cP^{vw} \partial_{r} \partial_{m} \partial_{k} \partial_{i} f \partial_{v} \partial_{t} \partial_{p} \partial_{{\ell}} \partial_{j} g \\
+\tfrac{2}{105} \partial_{s} \cP^{ij} \partial_{v} \cP^{k{\ell}} \cP^{mn} \partial_{u} \cP^{pq} \partial_{q} \cP^{rs} \partial_{w} \cP^{tu} \cP^{vw} \partial_{p} \partial_{m} \partial_{k} \partial_{i} f \partial_{t} \partial_{r} \partial_{n} \partial_{{\ell}} \partial_{j} g \\
-\tfrac{2}{105} \partial_{s} \cP^{ij} \cP^{k{\ell}} \partial_{u} \cP^{mn} \partial_{n} \cP^{pq} \partial_{q} \cP^{rs} \partial_{w} \cP^{tu} \cP^{vw} \partial_{p} \partial_{m} \partial_{k} \partial_{i} f \partial_{v} \partial_{t} \partial_{r} \partial_{{\ell}} \partial_{j} g \\
+\tfrac{8}{945} \partial_{n} \cP^{ij} \cP^{k{\ell}} \partial_{q} \cP^{mn} \partial_{s} \cP^{pq} \partial_{u} \cP^{rs} \partial_{w} \cP^{tu} \cP^{vw} \partial_{p} \partial_{m} \partial_{k} \partial_{i} f \partial_{v} \partial_{t} \partial_{r} \partial_{{\ell}} \partial_{j} g \\
-\tfrac{2}{105} \partial_{n} \cP^{ij} \cP^{k{\ell}} \partial_{u} \cP^{mn} \partial_{w} \cP^{pq} \partial_{q} \cP^{rs} \partial_{s} \cP^{tu} \cP^{vw} \partial_{p} \partial_{m} \partial_{k} \partial_{i} f \partial_{v} \partial_{t} \partial_{r} \partial_{{\ell}} \partial_{j} g \\
+\tfrac{2}{45} \partial_{q} \cP^{ij} \partial_{s} \cP^{k{\ell}} \partial_{v} \cP^{mn} \partial_{u} \cP^{pq} \partial_{w} \cP^{rs} \cP^{tu} \cP^{vw} \partial_{p} \partial_{m} \partial_{k} \partial_{i} f \partial_{t} \partial_{r} \partial_{n} \partial_{{\ell}} \partial_{j} g \\
-\tfrac{4}{135} \partial_{n} \cP^{ij} \partial_{u} \cP^{k{\ell}} \partial_{w} \cP^{mn} \cP^{pq} \partial_{q} \cP^{rs} \partial_{s} \cP^{tu} \cP^{vw} \partial_{p} \partial_{m} \partial_{k} \partial_{i} f \partial_{v} \partial_{t} \partial_{r} \partial_{{\ell}} \partial_{j} g \\
+\tfrac{1}{270} \partial_{n} \cP^{ij} \cP^{k{\ell}} \cP^{mn} \partial_{w} \cP^{pq} \partial_{q} \cP^{rs} \partial_{s} \cP^{tu} \partial_{u} \cP^{vw} \partial_{p} \partial_{m} \partial_{k} \partial_{i} f \partial_{v} \partial_{t} \partial_{r} \partial_{{\ell}} \partial_{j} g \\
-\tfrac{1}{180} \partial_{w} \cP^{ij} \cP^{k{\ell}} \partial_{u} \cP^{mn} \partial_{n} \cP^{pq} \partial_{q} \cP^{rs} \partial_{s} \cP^{tu} \cP^{vw} \partial_{p} \partial_{m} \partial_{k} \partial_{i} f \partial_{v} \partial_{t} \partial_{r} \partial_{{\ell}} \partial_{j} g \\
-\tfrac{1}{135} \partial_{u} \cP^{ij} \partial_{w} \cP^{k{\ell}} \cP^{mn} \partial_{n} \cP^{pq} \partial_{q} \cP^{rs} \partial_{s} \cP^{tu} \cP^{vw} \partial_{p} \partial_{m} \partial_{k} \partial_{i} f \partial_{v} \partial_{t} \partial_{r} \partial_{{\ell}} \partial_{j} g \\
+\tfrac{1}{45} \partial_{q} \cP^{ij} \partial_{s} \cP^{k{\ell}} \partial_{u} \cP^{mn} \partial_{v} \cP^{pq} \partial_{w} \cP^{rs} \cP^{tu} \cP^{vw} \partial_{p} \partial_{m} \partial_{k} \partial_{i} f \partial_{t} \partial_{r} \partial_{n} \partial_{{\ell}} \partial_{j} g \\
-\tfrac{1}{45} \partial_{n} \cP^{ij} \partial_{u} \cP^{k{\ell}} \partial_{s} \cP^{mn} \partial_{w} \cP^{pq} \partial_{q} \cP^{rs} \cP^{tu} \cP^{vw} \partial_{p} \partial_{m} \partial_{k} \partial_{i} f \partial_{v} \partial_{t} \partial_{r} \partial_{{\ell}} \partial_{j} g \\
+\tfrac{1}{45} \partial_{u} \cP^{ij} \partial_{v} \cP^{k{\ell}} \cP^{mn} \partial_{s} \cP^{pq} \partial_{q} \cP^{rs} \partial_{w} \cP^{tu} \cP^{vw} \partial_{p} \partial_{m} \partial_{k} \partial_{i} f \partial_{t} \partial_{r} \partial_{n} \partial_{{\ell}} \partial_{j} g \\
-\tfrac{11}{1080} \partial_{s} \cP^{ij} \partial_{t} \cP^{k{\ell}} \partial_{v} \cP^{mn} \cP^{pq} \cP^{rs} \partial_{w} \cP^{tu} \partial_{u} \cP^{vw} \partial_{p} \partial_{m} \partial_{k} \partial_{i} f \partial_{r} \partial_{q} \partial_{n} \partial_{{\ell}} \partial_{j} g \\
-\tfrac{2}{135} \partial_{w} \cP^{ij} \cP^{k{\ell}} \partial_{q} \cP^{mn} \partial_{n} \cP^{pq} \cP^{rs} \partial_{s} \cP^{tu} \partial_{u} \cP^{vw} \partial_{r} \partial_{m} \partial_{k} \partial_{i} f \partial_{v} \partial_{t} \partial_{p} \partial_{{\ell}} \partial_{j} g \\
-\tfrac{1}{54} \partial_{n} \cP^{ij} \partial_{u} \cP^{k{\ell}} \partial_{w} \cP^{mn} \partial_{s} \cP^{pq} \partial_{q} \cP^{rs} \cP^{tu} \cP^{vw} \partial_{p} \partial_{m} \partial_{k} \partial_{i} f \partial_{v} \partial_{t} \partial_{r} \partial_{{\ell}} \partial_{j} g \\
-\tfrac{1}{216} \partial_{w} \cP^{ij} \cP^{k{\ell}} \partial_{q} \cP^{mn} \partial_{n} \cP^{pq} \partial_{u} \cP^{rs} \partial_{s} \cP^{tu} \cP^{vw} \partial_{r} \partial_{m} \partial_{k} \partial_{i} f \partial_{v} \partial_{t} \partial_{p} \partial_{{\ell}} \partial_{j} g
\end{gather*}
}
{\footnotesize
\begin{gather*}
+\tfrac{2}{135} \partial_{n} \cP^{ij} \partial_{u} \cP^{k{\ell}} \cP^{mn} \partial_{w} \cP^{pq} \partial_{q} \cP^{rs} \partial_{s} \cP^{tu} \cP^{vw} \partial_{p} \partial_{m} \partial_{k} \partial_{i} f \partial_{v} \partial_{t} \partial_{r} \partial_{{\ell}} \partial_{j} g \\
-\tfrac{1}{135} \partial_{n} \cP^{ij} \partial_{s} \cP^{k{\ell}} \cP^{mn} \partial_{u} \cP^{pq} \partial_{q} \cP^{rs} \partial_{w} \cP^{tu} \cP^{vw} \partial_{p} \partial_{m} \partial_{k} \partial_{i} f \partial_{v} \partial_{t} \partial_{r} \partial_{{\ell}} \partial_{j} g \\
-\tfrac{4}{315} \partial_{n} \cP^{ij} \cP^{k{\ell}} \partial_{q} \cP^{mn} \partial_{u} \cP^{pq} \partial_{w} \cP^{rs} \partial_{s} \cP^{tu} \cP^{vw} \partial_{r} \partial_{p} \partial_{m} \partial_{k} \partial_{i} f \partial_{v} \partial_{t} \partial_{{\ell}} \partial_{j} g \\
+\tfrac{2}{105} \partial_{q} \cP^{ij} \partial_{v} \cP^{k{\ell}} \cP^{mn} \partial_{u} \cP^{pq} \partial_{w} \cP^{rs} \partial_{s} \cP^{tu} \cP^{vw} \partial_{r} \partial_{p} \partial_{m} \partial_{k} \partial_{i} f \partial_{t} \partial_{n} \partial_{{\ell}} \partial_{j} g \\
+\tfrac{2}{105} \partial_{n} \cP^{ij} \cP^{k{\ell}} \partial_{w} \cP^{mn} \cP^{pq} \partial_{q} \cP^{rs} \partial_{s} \cP^{tu} \partial_{u} \cP^{vw} \partial_{r} \partial_{p} \partial_{m} \partial_{k} \partial_{i} f \partial_{v} \partial_{t} \partial_{{\ell}} \partial_{j} g \\
-\tfrac{8}{945} \partial_{w} \cP^{ij} \cP^{k{\ell}} \cP^{mn} \partial_{n} \cP^{pq} \partial_{q} \cP^{rs} \partial_{s} \cP^{tu} \partial_{u} \cP^{vw} \partial_{r} \partial_{p} \partial_{m} \partial_{k} \partial_{i} f \partial_{v} \partial_{t} \partial_{{\ell}} \partial_{j} g \\
+\tfrac{2}{105} \partial_{s} \cP^{ij} \cP^{k{\ell}} \partial_{w} \cP^{mn} \partial_{n} \cP^{pq} \partial_{q} \cP^{rs} \cP^{tu} \partial_{u} \cP^{vw} \partial_{t} \partial_{p} \partial_{m} \partial_{k} \partial_{i} f \partial_{v} \partial_{r} \partial_{{\ell}} \partial_{j} g \\
+\tfrac{2}{45} \partial_{q} \cP^{ij} \partial_{u} \cP^{k{\ell}} \partial_{v} \cP^{mn} \partial_{w} \cP^{pq} \cP^{rs} \partial_{s} \cP^{tu} \cP^{vw} \partial_{r} \partial_{p} \partial_{m} \partial_{k} \partial_{i} f \partial_{t} \partial_{n} \partial_{{\ell}} \partial_{j} g \\
+\tfrac{4}{135} \partial_{n} \cP^{ij} \partial_{u} \cP^{k{\ell}} \partial_{q} \cP^{mn} \partial_{w} \cP^{pq} \cP^{rs} \partial_{s} \cP^{tu} \cP^{vw} \partial_{r} \partial_{p} \partial_{m} \partial_{k} \partial_{i} f \partial_{v} \partial_{t} \partial_{{\ell}} \partial_{j} g \\
-\tfrac{1}{270} \partial_{w} \cP^{ij} \cP^{k{\ell}} \partial_{u} \cP^{mn} \partial_{n} \cP^{pq} \partial_{q} \cP^{rs} \partial_{s} \cP^{tu} \cP^{vw} \partial_{r} \partial_{p} \partial_{m} \partial_{k} \partial_{i} f \partial_{v} \partial_{t} \partial_{{\ell}} \partial_{j} g \\
+\tfrac{1}{180} \partial_{n} \cP^{ij} \cP^{k{\ell}} \cP^{mn} \partial_{w} \cP^{pq} \partial_{q} \cP^{rs} \partial_{s} \cP^{tu} \partial_{u} \cP^{vw} \partial_{r} \partial_{p} \partial_{m} \partial_{k} \partial_{i} f \partial_{v} \partial_{t} \partial_{{\ell}} \partial_{j} g \\
+\tfrac{1}{135} \partial_{n} \cP^{ij} \partial_{q} \cP^{k{\ell}} \partial_{s} \cP^{mn} \cP^{pq} \partial_{u} \cP^{rs} \partial_{w} \cP^{tu} \cP^{vw} \partial_{r} \partial_{p} \partial_{m} \partial_{k} \partial_{i} f \partial_{v} \partial_{t} \partial_{{\ell}} \partial_{j} g \\
-\tfrac{1}{45} \partial_{q} \cP^{ij} \partial_{s} \cP^{k{\ell}} \partial_{u} \cP^{mn} \partial_{v} \cP^{pq} \cP^{rs} \partial_{w} \cP^{tu} \cP^{vw} \partial_{r} \partial_{p} \partial_{m} \partial_{k} \partial_{i} f \partial_{t} \partial_{n} \partial_{{\ell}} \partial_{j} g \\
+\tfrac{1}{45} \partial_{n} \cP^{ij} \partial_{s} \cP^{k{\ell}} \cP^{mn} \partial_{w} \cP^{pq} \partial_{q} \cP^{rs} \cP^{tu} \partial_{u} \cP^{vw} \partial_{t} \partial_{p} \partial_{m} \partial_{k} \partial_{i} f \partial_{v} \partial_{r} \partial_{{\ell}} \partial_{j} g \\
+\tfrac{1}{45} \partial_{q} \cP^{ij} \partial_{v} \cP^{k{\ell}} \cP^{mn} \partial_{w} \cP^{pq} \partial_{u} \cP^{rs} \partial_{s} \cP^{tu} \cP^{vw} \partial_{r} \partial_{p} \partial_{m} \partial_{k} \partial_{i} f \partial_{t} \partial_{n} \partial_{{\ell}} \partial_{j} g \\
+\tfrac{11}{1080} \partial_{s} \cP^{ij} \partial_{t} \cP^{k{\ell}} \partial_{v} \cP^{mn} \cP^{pq} \cP^{rs} \partial_{w} \cP^{tu} \partial_{u} \cP^{vw} \partial_{r} \partial_{p} \partial_{m} \partial_{k} \partial_{i} f \partial_{q} \partial_{n} \partial_{{\ell}} \partial_{j} g \\
+\tfrac{2}{135} \partial_{n} \cP^{ij} \cP^{k{\ell}} \partial_{q} \cP^{mn} \partial_{w} \cP^{pq} \partial_{u} \cP^{rs} \partial_{s} \cP^{tu} \cP^{vw} \partial_{r} \partial_{p} \partial_{m} \partial_{k} \partial_{i} f \partial_{v} \partial_{t} \partial_{{\ell}} \partial_{j} g \\
+\tfrac{1}{54} \partial_{n} \cP^{ij} \partial_{s} \cP^{k{\ell}} \cP^{mn} \cP^{pq} \partial_{q} \cP^{rs} \partial_{w} \cP^{tu} \partial_{u} \cP^{vw} \partial_{t} \partial_{p} \partial_{m} \partial_{k} \partial_{i} f \partial_{v} \partial_{r} \partial_{{\ell}} \partial_{j} g \\
+\tfrac{1}{216} \partial_{n} \cP^{ij} \cP^{k{\ell}} \cP^{mn} \partial_{s} \cP^{pq} \partial_{q} \cP^{rs} \partial_{w} \cP^{tu} \partial_{u} \cP^{vw} \partial_{t} \partial_{p} \partial_{m} \partial_{k} \partial_{i} f \partial_{v} \partial_{r} \partial_{{\ell}} \partial_{j} g \\
+\tfrac{1}{135} \partial_{n} \cP^{ij} \partial_{w} \cP^{k{\ell}} \partial_{u} \cP^{mn} \cP^{pq} \partial_{q} \cP^{rs} \partial_{s} \cP^{tu} \cP^{vw} \partial_{r} \partial_{p} \partial_{m} \partial_{k} \partial_{i} f \partial_{v} \partial_{t} \partial_{{\ell}} \partial_{j} g \\
-\tfrac{2}{135} \partial_{n} \cP^{ij} \partial_{w} \cP^{k{\ell}} \partial_{q} \cP^{mn} \partial_{u} \cP^{pq} \cP^{rs} \partial_{s} \cP^{tu} \cP^{vw} \partial_{r} \partial_{p} \partial_{m} \partial_{k} \partial_{i} f \partial_{v} \partial_{t} \partial_{{\ell}} \partial_{j} g \\
-\tfrac{1}{270} \partial_{n} \cP^{ij} \cP^{k{\ell}} \partial_{{\ell}} \cP^{mn} \partial_{w} \cP^{pq} \partial_{q} \cP^{rs} \partial_{s} \cP^{tu} \partial_{u} \cP^{vw} \partial_{p} \partial_{k} \partial_{i} f \partial_{v} \partial_{t} \partial_{r} \partial_{m} \partial_{j} g \\
+\tfrac{11}{7560} \cP^{ij} \partial_{w} \cP^{k{\ell}} \partial_{{\ell}} \cP^{mn} \partial_{n} \cP^{pq} \partial_{q} \cP^{rs} \partial_{s} \cP^{tu} \partial_{u} \cP^{vw} \partial_{m} \partial_{k} \partial_{i} f \partial_{v} \partial_{t} \partial_{r} \partial_{p} \partial_{j} g \\
-\tfrac{2}{315} \partial_{u} \cP^{ij} \partial_{w} \cP^{k{\ell}} \partial_{{\ell}} \cP^{mn} \partial_{n} \cP^{pq} \partial_{q} \cP^{rs} \partial_{s} \cP^{tu} \cP^{vw} \partial_{m} \partial_{k} \partial_{i} f \partial_{v} \partial_{t} \partial_{r} \partial_{p} \partial_{j} g \\
-\tfrac{4}{315} \partial_{{\ell}} \cP^{ij} \partial_{w} \cP^{k{\ell}} \cP^{mn} \partial_{n} \cP^{pq} \partial_{q} \cP^{rs} \partial_{s} \cP^{tu} \partial_{u} \cP^{vw} \partial_{m} \partial_{k} \partial_{i} f \partial_{v} \partial_{t} \partial_{r} \partial_{p} \partial_{j} g \\
-\tfrac{4}{315} \partial_{n} \cP^{ij} \partial_{q} \cP^{k{\ell}} \partial_{s} \cP^{mn} \partial_{v} \cP^{pq} \partial_{u} \cP^{rs} \partial_{w} \cP^{tu} \cP^{vw} \partial_{m} \partial_{k} \partial_{i} f \partial_{t} \partial_{r} \partial_{p} \partial_{{\ell}} \partial_{j} g \\
-\tfrac{31}{7560} \cP^{ij} \partial_{w} \cP^{k{\ell}} \partial_{{\ell}} \cP^{mn} \partial_{n} \cP^{pq} \partial_{q} \cP^{rs} \partial_{s} \cP^{tu} \partial_{u} \cP^{vw} \partial_{r} \partial_{k} \partial_{i} f \partial_{v} \partial_{t} \partial_{p} \partial_{m} \partial_{j} g \\
+\tfrac{8}{945} \partial_{s} \cP^{ij} \partial_{u} \cP^{k{\ell}} \partial_{{\ell}} \cP^{mn} \partial_{n} \cP^{pq} \partial_{q} \cP^{rs} \partial_{w} \cP^{tu} \cP^{vw} \partial_{m} \partial_{k} \partial_{i} f \partial_{v} \partial_{t} \partial_{r} \partial_{p} \partial_{j} g \\
-\tfrac{2}{945} \partial_{{\ell}} \cP^{ij} \partial_{n} \cP^{k{\ell}} \partial_{q} \cP^{mn} \partial_{s} \cP^{pq} \partial_{u} \cP^{rs} \partial_{w} \cP^{tu} \cP^{vw} \partial_{m} \partial_{k} \partial_{i} f \partial_{v} \partial_{t} \partial_{r} \partial_{p} \partial_{j} g \\
-\tfrac{1}{540} \cP^{ij} \partial_{n} \cP^{k{\ell}} \partial_{{\ell}} \cP^{mn} \partial_{w} \cP^{pq} \partial_{q} \cP^{rs} \partial_{s} \cP^{tu} \partial_{u} \cP^{vw} \partial_{p} \partial_{k} \partial_{i} f \partial_{v} \partial_{t} \partial_{r} \partial_{m} \partial_{j} g \\
+\tfrac{37}{15120} \partial_{s} \cP^{ij} \cP^{k{\ell}} \partial_{v} \cP^{mn} \partial_{n} \cP^{pq} \partial_{q} \cP^{rs} \partial_{w} \cP^{tu} \partial_{u} \cP^{vw} \partial_{m} \partial_{k} \partial_{i} f \partial_{t} \partial_{r} \partial_{p} \partial_{{\ell}} \partial_{j} g \\
-\tfrac{1}{135} \partial_{u} \cP^{ij} \partial_{n} \cP^{k{\ell}} \partial_{{\ell}} \cP^{mn} \partial_{w} \cP^{pq} \partial_{q} \cP^{rs} \partial_{s} \cP^{tu} \cP^{vw} \partial_{p} \partial_{k} \partial_{i} f \partial_{v} \partial_{t} \partial_{r} \partial_{m} \partial_{j} g \\
+\tfrac{1}{270} \partial_{n} \cP^{ij} \partial_{u} \cP^{k{\ell}} \partial_{{\ell}} \cP^{mn} \partial_{s} \cP^{pq} \partial_{q} \cP^{rs} \partial_{w} \cP^{tu} \cP^{vw} \partial_{p} \partial_{k} \partial_{i} f \partial_{v} \partial_{t} \partial_{r} \partial_{m} \partial_{j} g \\
-\tfrac{1}{270} \partial_{{\ell}} \cP^{ij} \partial_{w} \cP^{k{\ell}} \partial_{u} \cP^{mn} \partial_{n} \cP^{pq} \partial_{q} \cP^{rs} \partial_{s} \cP^{tu} \cP^{vw} \partial_{r} \partial_{p} \partial_{m} \partial_{k} \partial_{i} f \partial_{v} \partial_{t} \partial_{j} g \\
+\tfrac{11}{7560} \cP^{ij} \partial_{w} \cP^{k{\ell}} \partial_{{\ell}} \cP^{mn} \partial_{n} \cP^{pq} \partial_{q} \cP^{rs} \partial_{s} \cP^{tu} \partial_{u} \cP^{vw} \partial_{r} \partial_{p} \partial_{m} \partial_{k} \partial_{i} f \partial_{v} \partial_{t} \partial_{j} g \\
-\tfrac{2}{315} \partial_{{\ell}} \cP^{ij} \partial_{n} \cP^{k{\ell}} \partial_{q} \cP^{mn} \partial_{w} \cP^{pq} \cP^{rs} \partial_{s} \cP^{tu} \partial_{u} \cP^{vw} \partial_{r} \partial_{p} \partial_{m} \partial_{k} \partial_{i} f \partial_{v} \partial_{t} \partial_{j} g \\
-\tfrac{4}{315} \partial_{u} \cP^{ij} \partial_{w} \cP^{k{\ell}} \partial_{{\ell}} \cP^{mn} \partial_{n} \cP^{pq} \partial_{q} \cP^{rs} \partial_{s} \cP^{tu} \cP^{vw} \partial_{r} \partial_{p} \partial_{m} \partial_{k} \partial_{i} f \partial_{v} \partial_{t} \partial_{j} g \\
+\tfrac{4}{315} \partial_{n} \cP^{ij} \partial_{u} \cP^{k{\ell}} \partial_{v} \cP^{mn} \partial_{w} \cP^{pq} \partial_{q} \cP^{rs} \partial_{s} \cP^{tu} \cP^{vw} \partial_{r} \partial_{p} \partial_{m} \partial_{k} \partial_{i} f \partial_{t} \partial_{{\ell}} \partial_{j} g
\end{gather*}
}
{\footnotesize
\begin{gather*}
-\tfrac{31}{7560} \cP^{ij} \partial_{w} \cP^{k{\ell}} \partial_{{\ell}} \cP^{mn} \partial_{n} \cP^{pq} \partial_{q} \cP^{rs} \partial_{s} \cP^{tu} \partial_{u} \cP^{vw} \partial_{t} \partial_{r} \partial_{m} \partial_{k} \partial_{i} f \partial_{v} \partial_{p} \partial_{j} g \\
+\tfrac{8}{945} \partial_{{\ell}} \cP^{ij} \partial_{n} \cP^{k{\ell}} \partial_{w} \cP^{mn} \cP^{pq} \partial_{q} \cP^{rs} \partial_{s} \cP^{tu} \partial_{u} \cP^{vw} \partial_{r} \partial_{p} \partial_{m} \partial_{k} \partial_{i} f \partial_{v} \partial_{t} \partial_{j} g \\
-\tfrac{2}{945} \partial_{w} \cP^{ij} \cP^{k{\ell}} \partial_{{\ell}} \cP^{mn} \partial_{n} \cP^{pq} \partial_{q} \cP^{rs} \partial_{s} \cP^{tu} \partial_{u} \cP^{vw} \partial_{r} \partial_{p} \partial_{m} \partial_{k} \partial_{i} f \partial_{v} \partial_{t} \partial_{j} g \\
-\tfrac{1}{540} \cP^{ij} \partial_{s} \cP^{k{\ell}} \partial_{{\ell}} \cP^{mn} \partial_{n} \cP^{pq} \partial_{q} \cP^{rs} \partial_{w} \cP^{tu} \partial_{u} \cP^{vw} \partial_{t} \partial_{p} \partial_{m} \partial_{k} \partial_{i} f \partial_{v} \partial_{r} \partial_{j} g \\
+\tfrac{37}{15120} \partial_{n} \cP^{ij} \cP^{k{\ell}} \partial_{q} \cP^{mn} \partial_{w} \cP^{pq} \partial_{t} \cP^{rs} \partial_{s} \cP^{tu} \partial_{u} \cP^{vw} \partial_{r} \partial_{p} \partial_{m} \partial_{k} \partial_{i} f \partial_{v} \partial_{{\ell}} \partial_{j} g \\
+\tfrac{1}{270} \partial_{{\ell}} \cP^{ij} \partial_{s} \cP^{k{\ell}} \cP^{mn} \partial_{n} \cP^{pq} \partial_{q} \cP^{rs} \partial_{w} \cP^{tu} \partial_{u} \cP^{vw} \partial_{t} \partial_{p} \partial_{m} \partial_{k} \partial_{i} f \partial_{v} \partial_{r} \partial_{j} g \\
-\tfrac{1}{135} \partial_{{\ell}} \cP^{ij} \partial_{n} \cP^{k{\ell}} \partial_{s} \cP^{mn} \cP^{pq} \partial_{q} \cP^{rs} \partial_{w} \cP^{tu} \partial_{u} \cP^{vw} \partial_{t} \partial_{p} \partial_{m} \partial_{k} \partial_{i} f \partial_{v} \partial_{r} \partial_{j} g \\
-\tfrac{1}{72} \partial_{w} \cP^{ij} \cP^{k{\ell}} \cP^{mn} \cP^{pq} \cP^{rs} \cP^{tu} \partial_{u} \cP^{vw} \partial_{t} \partial_{r} \partial_{p} \partial_{m} \partial_{k} \partial_{i} f \partial_{v} \partial_{s} \partial_{q} \partial_{n} \partial_{{\ell}} \partial_{j} g \\
-\tfrac{1}{54} \partial_{u} \cP^{ij} \partial_{w} \cP^{k{\ell}} \cP^{mn} \cP^{pq} \cP^{rs} \cP^{tu} \cP^{vw} \partial_{t} \partial_{r} \partial_{p} \partial_{m} \partial_{k} \partial_{i} f \partial_{v} \partial_{s} \partial_{q} \partial_{n} \partial_{{\ell}} \partial_{j} g \\
-\tfrac{1}{720} \cP^{ij} \cP^{k{\ell}} \cP^{mn} \cP^{pq} \cP^{rs} \partial_{w} \cP^{tu} \partial_{u} \cP^{vw} \partial_{t} \partial_{r} \partial_{p} \partial_{m} \partial_{k} \partial_{i} f \partial_{v} \partial_{s} \partial_{q} \partial_{n} \partial_{{\ell}} \partial_{j} g \\
-\tfrac{1}{45} \partial_{u} \cP^{ij} \partial_{v} \cP^{k{\ell}} \cP^{mn} \cP^{pq} \cP^{rs} \partial_{w} \cP^{tu} \cP^{vw} \partial_{r} \partial_{p} \partial_{m} \partial_{k} \partial_{i} f \partial_{t} \partial_{s} \partial_{q} \partial_{n} \partial_{{\ell}} \partial_{j} g \\
+\tfrac{2}{135} \partial_{w} \cP^{ij} \cP^{k{\ell}} \cP^{mn} \cP^{pq} \cP^{rs} \partial_{s} \cP^{tu} \partial_{u} \cP^{vw} \partial_{r} \partial_{p} \partial_{m} \partial_{k} \partial_{i} f \partial_{v} \partial_{t} \partial_{q} \partial_{n} \partial_{{\ell}} \partial_{j} g \\
+\tfrac{1}{18} \partial_{s} \cP^{ij} \partial_{u} \cP^{k{\ell}} \cP^{mn} \cP^{pq} \partial_{w} \cP^{rs} \cP^{tu} \cP^{vw} \partial_{r} \partial_{p} \partial_{m} \partial_{k} \partial_{i} f \partial_{v} \partial_{t} \partial_{q} \partial_{n} \partial_{{\ell}} \partial_{j} g \\
+\tfrac{1}{54} \partial_{s} \cP^{ij} \partial_{u} \cP^{k{\ell}} \partial_{w} \cP^{mn} \cP^{pq} \cP^{rs} \cP^{tu} \cP^{vw} \partial_{r} \partial_{p} \partial_{m} \partial_{k} \partial_{i} f \partial_{v} \partial_{t} \partial_{q} \partial_{n} \partial_{{\ell}} \partial_{j} g \\
+\tfrac{1}{108} \partial_{w} \cP^{ij} \cP^{k{\ell}} \cP^{mn} \cP^{pq} \partial_{u} \cP^{rs} \partial_{s} \cP^{tu} \cP^{vw} \partial_{r} \partial_{p} \partial_{m} \partial_{k} \partial_{i} f \partial_{v} \partial_{t} \partial_{q} \partial_{n} \partial_{{\ell}} \partial_{j} g \\
-\tfrac{1}{45} \partial_{u} \cP^{ij} \partial_{v} \cP^{k{\ell}} \cP^{mn} \cP^{pq} \cP^{rs} \partial_{w} \cP^{tu} \cP^{vw} \partial_{t} \partial_{r} \partial_{p} \partial_{m} \partial_{k} \partial_{i} f \partial_{s} \partial_{q} \partial_{n} \partial_{{\ell}} \partial_{j} g \\
-\tfrac{2}{135} \partial_{s} \cP^{ij} \cP^{k{\ell}} \cP^{mn} \cP^{pq} \partial_{u} \cP^{rs} \partial_{w} \cP^{tu} \cP^{vw} \partial_{t} \partial_{r} \partial_{p} \partial_{m} \partial_{k} \partial_{i} f \partial_{v} \partial_{q} \partial_{n} \partial_{{\ell}} \partial_{j} g \\
-\tfrac{1}{18} \partial_{s} \cP^{ij} \partial_{w} \cP^{k{\ell}} \cP^{mn} \cP^{pq} \cP^{rs} \cP^{tu} \partial_{u} \cP^{vw} \partial_{t} \partial_{r} \partial_{p} \partial_{m} \partial_{k} \partial_{i} f \partial_{v} \partial_{q} \partial_{n} \partial_{{\ell}} \partial_{j} g \\
-\tfrac{1}{54} \partial_{s} \cP^{ij} \partial_{u} \cP^{k{\ell}} \partial_{w} \cP^{mn} \cP^{pq} \cP^{rs} \cP^{tu} \cP^{vw} \partial_{t} \partial_{r} \partial_{p} \partial_{m} \partial_{k} \partial_{i} f \partial_{v} \partial_{q} \partial_{n} \partial_{{\ell}} \partial_{j} g \\
-\tfrac{1}{108} \partial_{s} \cP^{ij} \cP^{k{\ell}} \cP^{mn} \cP^{pq} \cP^{rs} \partial_{w} \cP^{tu} \partial_{u} \cP^{vw} \partial_{t} \partial_{r} \partial_{p} \partial_{m} \partial_{k} \partial_{i} f \partial_{v} \partial_{q} \partial_{n} \partial_{{\ell}} \partial_{j} g \\
+\tfrac{1}{540} \cP^{ij} \cP^{k{\ell}} \cP^{mn} \partial_{w} \cP^{pq} \partial_{q} \cP^{rs} \partial_{s} \cP^{tu} \partial_{u} \cP^{vw} \partial_{p} \partial_{m} \partial_{k} \partial_{i} f \partial_{v} \partial_{t} \partial_{r} \partial_{n} \partial_{{\ell}} \partial_{j} g \\
+\tfrac{2}{45} \partial_{s} \cP^{ij} \partial_{u} \cP^{k{\ell}} \partial_{v} \cP^{mn} \cP^{pq} \partial_{w} \cP^{rs} \cP^{tu} \cP^{vw} \partial_{p} \partial_{m} \partial_{k} \partial_{i} f \partial_{t} \partial_{r} \partial_{q} \partial_{n} \partial_{{\ell}} \partial_{j} g \\
-\tfrac{4}{135} \partial_{u} \cP^{ij} \partial_{w} \cP^{k{\ell}} \cP^{mn} \cP^{pq} \partial_{q} \cP^{rs} \partial_{s} \cP^{tu} \cP^{vw} \partial_{p} \partial_{m} \partial_{k} \partial_{i} f \partial_{v} \partial_{t} \partial_{r} \partial_{n} \partial_{{\ell}} \partial_{j} g \\
+\tfrac{1}{135} \partial_{q} \cP^{ij} \partial_{s} \cP^{k{\ell}} \cP^{mn} \cP^{pq} \partial_{u} \cP^{rs} \partial_{w} \cP^{tu} \cP^{vw} \partial_{p} \partial_{m} \partial_{k} \partial_{i} f \partial_{v} \partial_{t} \partial_{r} \partial_{n} \partial_{{\ell}} \partial_{j} g \\
-\tfrac{1}{54} \partial_{q} \cP^{ij} \partial_{s} \cP^{k{\ell}} \partial_{u} \cP^{mn} \partial_{w} \cP^{pq} \cP^{rs} \cP^{tu} \cP^{vw} \partial_{p} \partial_{m} \partial_{k} \partial_{i} f \partial_{v} \partial_{t} \partial_{r} \partial_{n} \partial_{{\ell}} \partial_{j} g \\
-\tfrac{1}{108} \partial_{u} \cP^{ij} \partial_{w} \cP^{k{\ell}} \cP^{mn} \partial_{s} \cP^{pq} \partial_{q} \cP^{rs} \cP^{tu} \cP^{vw} \partial_{p} \partial_{m} \partial_{k} \partial_{i} f \partial_{v} \partial_{t} \partial_{r} \partial_{n} \partial_{{\ell}} \partial_{j} g \\
+\tfrac{1}{45} \partial_{u} \cP^{ij} \cP^{k{\ell}} \cP^{mn} \partial_{w} \cP^{pq} \partial_{q} \cP^{rs} \partial_{s} \cP^{tu} \cP^{vw} \partial_{p} \partial_{m} \partial_{k} \partial_{i} f \partial_{v} \partial_{t} \partial_{r} \partial_{n} \partial_{{\ell}} \partial_{j} g \\
-\tfrac{1}{90} \partial_{s} \cP^{ij} \cP^{k{\ell}} \cP^{mn} \partial_{u} \cP^{pq} \partial_{q} \cP^{rs} \partial_{w} \cP^{tu} \cP^{vw} \partial_{p} \partial_{m} \partial_{k} \partial_{i} f \partial_{v} \partial_{t} \partial_{r} \partial_{n} \partial_{{\ell}} \partial_{j} g \\
+\tfrac{1}{540} \cP^{ij} \cP^{k{\ell}} \cP^{mn} \partial_{w} \cP^{pq} \partial_{q} \cP^{rs} \partial_{s} \cP^{tu} \partial_{u} \cP^{vw} \partial_{t} \partial_{r} \partial_{p} \partial_{m} \partial_{k} \partial_{i} f \partial_{v} \partial_{n} \partial_{{\ell}} \partial_{j} g \\
-\tfrac{2}{45} \partial_{s} \cP^{ij} \partial_{u} \cP^{k{\ell}} \partial_{v} \cP^{mn} \cP^{pq} \partial_{w} \cP^{rs} \cP^{tu} \cP^{vw} \partial_{t} \partial_{r} \partial_{p} \partial_{m} \partial_{k} \partial_{i} f \partial_{q} \partial_{n} \partial_{{\ell}} \partial_{j} g \\
-\tfrac{4}{135} \partial_{q} \cP^{ij} \partial_{s} \cP^{k{\ell}} \cP^{mn} \partial_{u} \cP^{pq} \cP^{rs} \partial_{w} \cP^{tu} \cP^{vw} \partial_{t} \partial_{r} \partial_{p} \partial_{m} \partial_{k} \partial_{i} f \partial_{v} \partial_{n} \partial_{{\ell}} \partial_{j} g \\
+\tfrac{1}{135} \partial_{q} \cP^{ij} \partial_{w} \cP^{k{\ell}} \cP^{mn} \partial_{s} \cP^{pq} \partial_{u} \cP^{rs} \cP^{tu} \cP^{vw} \partial_{t} \partial_{r} \partial_{p} \partial_{m} \partial_{k} \partial_{i} f \partial_{v} \partial_{n} \partial_{{\ell}} \partial_{j} g \\
-\tfrac{1}{54} \partial_{q} \cP^{ij} \partial_{s} \cP^{k{\ell}} \partial_{w} \cP^{mn} \cP^{pq} \cP^{rs} \cP^{tu} \partial_{u} \cP^{vw} \partial_{t} \partial_{r} \partial_{p} \partial_{m} \partial_{k} \partial_{i} f \partial_{v} \partial_{n} \partial_{{\ell}} \partial_{j} g \\
-\tfrac{1}{108} \partial_{q} \cP^{ij} \partial_{s} \cP^{k{\ell}} \cP^{mn} \cP^{pq} \cP^{rs} \partial_{w} \cP^{tu} \partial_{u} \cP^{vw} \partial_{t} \partial_{r} \partial_{p} \partial_{m} \partial_{k} \partial_{i} f \partial_{v} \partial_{n} \partial_{{\ell}} \partial_{j} g \\
-\tfrac{1}{90} \partial_{q} \cP^{ij} \cP^{k{\ell}} \cP^{mn} \partial_{w} \cP^{pq} \cP^{rs} \partial_{s} \cP^{tu} \partial_{u} \cP^{vw} \partial_{t} \partial_{r} \partial_{p} \partial_{m} \partial_{k} \partial_{i} f \partial_{v} \partial_{n} \partial_{{\ell}} \partial_{j} g \\
+\tfrac{1}{45} \partial_{q} \cP^{ij} \cP^{k{\ell}} \cP^{mn} \partial_{s} \cP^{pq} \partial_{w} \cP^{rs} \cP^{tu} \partial_{u} \cP^{vw} \partial_{t} \partial_{r} \partial_{p} \partial_{m} \partial_{k} \partial_{i} f \partial_{v} \partial_{n} \partial_{{\ell}} \partial_{j} g \\
+\tfrac{2}{315} \partial_{w} \cP^{ij} \cP^{k{\ell}} \cP^{mn} \partial_{n} \cP^{pq} \partial_{q} \cP^{rs} \partial_{s} \cP^{tu} \partial_{u} \cP^{vw} \partial_{m} \partial_{k} \partial_{i} f \partial_{v} \partial_{t} \partial_{r} \partial_{p} \partial_{{\ell}} \partial_{j} g \\
-\tfrac{1}{270} \partial_{w} \cP^{ij} \cP^{k{\ell}} \partial_{u} \cP^{mn} \partial_{n} \cP^{pq} \partial_{q} \cP^{rs} \partial_{s} \cP^{tu} \cP^{vw} \partial_{m} \partial_{k} \partial_{i} f \partial_{v} \partial_{t} \partial_{r} \partial_{p} \partial_{{\ell}} \partial_{j} g
\end{gather*}
}
{\enlargethispage{1.3\baselineskip}
\footnotesize
\begin{gather*}
-\tfrac{1}{135} \partial_{n} \cP^{ij} \partial_{q} \cP^{k{\ell}} \partial_{s} \cP^{mn} \partial_{u} \cP^{pq} \cP^{rs} \partial_{w} \cP^{tu} \cP^{vw} \partial_{m} \partial_{k} \partial_{i} f \partial_{v} \partial_{t} \partial_{r} \partial_{p} \partial_{{\ell}} \partial_{j} g \\
-\tfrac{2}{105} \partial_{s} \cP^{ij} \cP^{k{\ell}} \partial_{u} \cP^{mn} \partial_{n} \cP^{pq} \partial_{q} \cP^{rs} \partial_{w} \cP^{tu} \cP^{vw} \partial_{m} \partial_{k} \partial_{i} f \partial_{v} \partial_{t} \partial_{r} \partial_{p} \partial_{{\ell}} \partial_{j} g \\
-\tfrac{1}{270} \partial_{s} \cP^{ij} \cP^{k{\ell}} \partial_{q} \cP^{mn} \partial_{n} \cP^{pq} \partial_{u} \cP^{rs} \partial_{w} \cP^{tu} \cP^{vw} \partial_{m} \partial_{k} \partial_{i} f \partial_{v} \partial_{t} \partial_{r} \partial_{p} \partial_{{\ell}} \partial_{j} g \\
-\tfrac{2}{135} \partial_{s} \cP^{ij} \partial_{u} \cP^{k{\ell}} \partial_{w} \cP^{mn} \partial_{n} \cP^{pq} \partial_{q} \cP^{rs} \cP^{tu} \cP^{vw} \partial_{m} \partial_{k} \partial_{i} f \partial_{v} \partial_{t} \partial_{r} \partial_{p} \partial_{{\ell}} \partial_{j} g \\
+\tfrac{1}{135} \partial_{q} \cP^{ij} \partial_{s} \cP^{k{\ell}} \partial_{u} \cP^{mn} \partial_{n} \cP^{pq} \cP^{rs} \partial_{w} \cP^{tu} \cP^{vw} \partial_{m} \partial_{k} \partial_{i} f \partial_{v} \partial_{t} \partial_{r} \partial_{p} \partial_{{\ell}} \partial_{j} g \\
-\tfrac{2}{315} \partial_{q} \cP^{ij} \partial_{v} \cP^{k{\ell}} \cP^{mn} \partial_{s} \cP^{pq} \partial_{w} \cP^{rs} \cP^{tu} \partial_{u} \cP^{vw} \partial_{m} \partial_{k} \partial_{i} f \partial_{t} \partial_{r} \partial_{p} \partial_{n} \partial_{{\ell}} \partial_{j} g \\
-\tfrac{2}{315} \partial_{n} \cP^{ij} \cP^{k{\ell}} \partial_{q} \cP^{mn} \partial_{s} \cP^{pq} \partial_{u} \cP^{rs} \partial_{w} \cP^{tu} \cP^{vw} \partial_{t} \partial_{r} \partial_{p} \partial_{m} \partial_{k} \partial_{i} f \partial_{v} \partial_{{\ell}} \partial_{j} g \\
+\tfrac{1}{270} \partial_{n} \cP^{ij} \cP^{k{\ell}} \cP^{mn} \partial_{w} \cP^{pq} \partial_{q} \cP^{rs} \partial_{s} \cP^{tu} \partial_{u} \cP^{vw} \partial_{t} \partial_{r} \partial_{p} \partial_{m} \partial_{k} \partial_{i} f \partial_{v} \partial_{{\ell}} \partial_{j} g \\
+\tfrac{1}{135} \partial_{n} \cP^{ij} \partial_{w} \cP^{k{\ell}} \partial_{q} \cP^{mn} \partial_{s} \cP^{pq} \cP^{rs} \cP^{tu} \partial_{u} \cP^{vw} \partial_{t} \partial_{r} \partial_{p} \partial_{m} \partial_{k} \partial_{i} f \partial_{v} \partial_{{\ell}} \partial_{j} g \\
+\tfrac{2}{105} \partial_{n} \cP^{ij} \cP^{k{\ell}} \partial_{q} \cP^{mn} \partial_{w} \cP^{pq} \cP^{rs} \partial_{s} \cP^{tu} \partial_{u} \cP^{vw} \partial_{t} \partial_{r} \partial_{p} \partial_{m} \partial_{k} \partial_{i} f \partial_{v} \partial_{{\ell}} \partial_{j} g \\
+\tfrac{1}{270} \partial_{n} \cP^{ij} \cP^{k{\ell}} \partial_{q} \cP^{mn} \partial_{s} \cP^{pq} \cP^{rs} \partial_{w} \cP^{tu} \partial_{u} \cP^{vw} \partial_{t} \partial_{r} \partial_{p} \partial_{m} \partial_{k} \partial_{i} f \partial_{v} \partial_{{\ell}} \partial_{j} g \\
-\tfrac{1}{135} \partial_{n} \cP^{ij} \partial_{q} \cP^{k{\ell}} \partial_{w} \cP^{mn} \cP^{pq} \cP^{rs} \partial_{s} \cP^{tu} \partial_{u} \cP^{vw} \partial_{t} \partial_{r} \partial_{p} \partial_{m} \partial_{k} \partial_{i} f \partial_{v} \partial_{{\ell}} \partial_{j} g \\
+\tfrac{2}{135} \partial_{n} \cP^{ij} \partial_{q} \cP^{k{\ell}} \partial_{s} \cP^{mn} \cP^{pq} \partial_{w} \cP^{rs} \cP^{tu} \partial_{u} \cP^{vw} \partial_{t} \partial_{r} \partial_{p} \partial_{m} \partial_{k} \partial_{i} f \partial_{v} \partial_{{\ell}} \partial_{j} g \\
-\tfrac{2}{315} \partial_{q} \cP^{ij} \partial_{v} \cP^{k{\ell}} \cP^{mn} \partial_{s} \cP^{pq} \partial_{w} \cP^{rs} \cP^{tu} \partial_{u} \cP^{vw} \partial_{t} \partial_{r} \partial_{p} \partial_{m} \partial_{k} \partial_{i} f \partial_{n} \partial_{{\ell}} \partial_{j} g \\
-\tfrac{1}{945} \cP^{ij} \partial_{w} \cP^{k{\ell}} \partial_{{\ell}} \cP^{mn} \partial_{n} \cP^{pq} \partial_{q} \cP^{rs} \partial_{s} \cP^{tu} \partial_{u} \cP^{vw} \partial_{k} \partial_{i} f \partial_{v} \partial_{t} \partial_{r} \partial_{p} \partial_{m} \partial_{j} g \\
-\tfrac{4}{945} \partial_{u} \cP^{ij} \partial_{w} \cP^{k{\ell}} \partial_{{\ell}} \cP^{mn} \partial_{n} \cP^{pq} \partial_{q} \cP^{rs} \partial_{s} \cP^{tu} \cP^{vw} \partial_{k} \partial_{i} f \partial_{v} \partial_{t} \partial_{r} \partial_{p} \partial_{m} \partial_{j} g \\
+\tfrac{2}{945} \partial_{n} \cP^{ij} \partial_{q} \cP^{k{\ell}} \partial_{s} \cP^{mn} \partial_{v} \cP^{pq} \partial_{u} \cP^{rs} \partial_{w} \cP^{tu} \cP^{vw} \partial_{k} \partial_{i} f \partial_{t} \partial_{r} \partial_{p} \partial_{m} \partial_{{\ell}} \partial_{j} g \\
+\tfrac{4}{945} \partial_{q} \cP^{ij} \partial_{s} \cP^{k{\ell}} \partial_{{\ell}} \cP^{mn} \partial_{n} \cP^{pq} \partial_{u} \cP^{rs} \partial_{w} \cP^{tu} \cP^{vw} \partial_{k} \partial_{i} f \partial_{v} \partial_{t} \partial_{r} \partial_{p} \partial_{m} \partial_{j} g \\
-\tfrac{2}{945} \partial_{n} \cP^{ij} \partial_{q} \cP^{k{\ell}} \partial_{{\ell}} \cP^{mn} \partial_{s} \cP^{pq} \partial_{u} \cP^{rs} \partial_{w} \cP^{tu} \cP^{vw} \partial_{k} \partial_{i} f \partial_{v} \partial_{t} \partial_{r} \partial_{p} \partial_{m} \partial_{j} g \\
-\tfrac{1}{945} \cP^{ij} \partial_{w} \cP^{k{\ell}} \partial_{{\ell}} \cP^{mn} \partial_{n} \cP^{pq} \partial_{q} \cP^{rs} \partial_{s} \cP^{tu} \partial_{u} \cP^{vw} \partial_{t} \partial_{r} \partial_{p} \partial_{m} \partial_{k} \partial_{i} f \partial_{v} \partial_{j} g \\
-\tfrac{4}{945} \partial_{{\ell}} \cP^{ij} \partial_{n} \cP^{k{\ell}} \partial_{q} \cP^{mn} \partial_{s} \cP^{pq} \partial_{w} \cP^{rs} \cP^{tu} \partial_{u} \cP^{vw} \partial_{t} \partial_{r} \partial_{p} \partial_{m} \partial_{k} \partial_{i} f \partial_{v} \partial_{j} g \\
-\tfrac{2}{945} \partial_{n} \cP^{ij} \partial_{q} \cP^{k{\ell}} \partial_{s} \cP^{mn} \partial_{v} \cP^{pq} \partial_{u} \cP^{rs} \partial_{w} \cP^{tu} \cP^{vw} \partial_{t} \partial_{r} \partial_{p} \partial_{m} \partial_{k} \partial_{i} f \partial_{{\ell}} \partial_{j} g \\
-\tfrac{2}{945} \partial_{{\ell}} \cP^{ij} \partial_{w} \cP^{k{\ell}} \cP^{mn} \partial_{n} \cP^{pq} \partial_{q} \cP^{rs} \partial_{s} \cP^{tu} \partial_{u} \cP^{vw} \partial_{t} \partial_{r} \partial_{p} \partial_{m} \partial_{k} \partial_{i} f \partial_{v} \partial_{j} g \\
+\tfrac{4}{945} \partial_{{\ell}} \cP^{ij} \partial_{n} \cP^{k{\ell}} \partial_{w} \cP^{mn} \cP^{pq} \partial_{q} \cP^{rs} \partial_{s} \cP^{tu} \partial_{u} \cP^{vw} \partial_{t} \partial_{r} \partial_{p} \partial_{m} \partial_{k} \partial_{i} f \partial_{v} \partial_{j} g \\
+\tfrac{1}{5040} \cP^{ij} \cP^{k{\ell}} \cP^{mn} \cP^{pq} \cP^{rs} \cP^{tu} \cP^{vw} \partial_{v} \partial_{t} \partial_{r} \partial_{p} \partial_{m} \partial_{k} \partial_{i} f \partial_{w} \partial_{u} \partial_{s} \partial_{q} \partial_{n} \partial_{{\ell}} \partial_{j} g \\
-\tfrac{1}{360} \partial_{w} \cP^{ij} \cP^{k{\ell}} \cP^{mn} \cP^{pq} \cP^{rs} \cP^{tu} \cP^{vw} \partial_{t} \partial_{r} \partial_{p} \partial_{m} \partial_{k} \partial_{i} f \partial_{v} \partial_{u} \partial_{s} \partial_{q} \partial_{n} \partial_{{\ell}} \partial_{j} g \\
+\tfrac{1}{360} \partial_{w} \cP^{ij} \cP^{k{\ell}} \cP^{mn} \cP^{pq} \cP^{rs} \cP^{tu} \cP^{vw} \partial_{v} \partial_{t} \partial_{r} \partial_{p} \partial_{m} \partial_{k} \partial_{i} f \partial_{u} \partial_{s} \partial_{q} \partial_{n} \partial_{{\ell}} \partial_{j} g \\
+\tfrac{1}{108} \partial_{u} \cP^{ij} \partial_{w} \cP^{k{\ell}} \cP^{mn} \cP^{pq} \cP^{rs} \cP^{tu} \cP^{vw} \partial_{r} \partial_{p} \partial_{m} \partial_{k} \partial_{i} f \partial_{v} \partial_{t} \partial_{s} \partial_{q} \partial_{n} \partial_{{\ell}} \partial_{j} g \\
+\tfrac{1}{108} \partial_{u} \cP^{ij} \partial_{w} \cP^{k{\ell}} \cP^{mn} \cP^{pq} \cP^{rs} \cP^{tu} \cP^{vw} \partial_{v} \partial_{t} \partial_{r} \partial_{p} \partial_{m} \partial_{k} \partial_{i} f \partial_{s} \partial_{q} \partial_{n} \partial_{{\ell}} \partial_{j} g \\
+\tfrac{1}{270} \partial_{s} \cP^{ij} \cP^{k{\ell}} \cP^{mn} \cP^{pq} \partial_{u} \cP^{rs} \partial_{w} \cP^{tu} \cP^{vw} \partial_{p} \partial_{m} \partial_{k} \partial_{i} f \partial_{v} \partial_{t} \partial_{r} \partial_{q} \partial_{n} \partial_{{\ell}} \partial_{j} g \\
-\tfrac{1}{162} \partial_{s} \cP^{ij} \partial_{u} \cP^{k{\ell}} \partial_{w} \cP^{mn} \cP^{pq} \cP^{rs} \cP^{tu} \cP^{vw} \partial_{p} \partial_{m} \partial_{k} \partial_{i} f \partial_{v} \partial_{t} \partial_{r} \partial_{q} \partial_{n} \partial_{{\ell}} \partial_{j} g \\
-\tfrac{1}{270} \partial_{s} \cP^{ij} \cP^{k{\ell}} \cP^{mn} \cP^{pq} \partial_{u} \cP^{rs} \partial_{w} \cP^{tu} \cP^{vw} \partial_{v} \partial_{t} \partial_{r} \partial_{p} \partial_{m} \partial_{k} \partial_{i} f \partial_{q} \partial_{n} \partial_{{\ell}} \partial_{j} g \\
+\tfrac{1}{162} \partial_{s} \cP^{ij} \partial_{u} \cP^{k{\ell}} \partial_{w} \cP^{mn} \cP^{pq} \cP^{rs} \cP^{tu} \cP^{vw} \partial_{v} \partial_{t} \partial_{r} \partial_{p} \partial_{m} \partial_{k} \partial_{i} f \partial_{q} \partial_{n} \partial_{{\ell}} \partial_{j} g \\
-\tfrac{1}{135} \partial_{q} \cP^{ij} \partial_{s} \cP^{k{\ell}} \cP^{mn} \partial_{u} \cP^{pq} \cP^{rs} \partial_{w} \cP^{tu} \cP^{vw} \partial_{m} \partial_{k} \partial_{i} f \partial_{v} \partial_{t} \partial_{r} \partial_{p} \partial_{n} \partial_{{\ell}} \partial_{j} g \\
-\tfrac{1}{135} \partial_{q} \cP^{ij} \partial_{s} \cP^{k{\ell}} \cP^{mn} \partial_{u} \cP^{pq} \cP^{rs} \partial_{w} \cP^{tu} \cP^{vw} \partial_{v} \partial_{t} \partial_{r} \partial_{p} \partial_{m} \partial_{k} \partial_{i} f \partial_{n} \partial_{{\ell}} \partial_{j} g \\
-\tfrac{2}{945} \partial_{n} \cP^{ij} \cP^{k{\ell}} \partial_{q} \cP^{mn} \partial_{s} \cP^{pq} \partial_{u} \cP^{rs} \partial_{w} \cP^{tu} \cP^{vw} \partial_{k} \partial_{i} f \partial_{v} \partial_{t} \partial_{r} \partial_{p} \partial_{m} \partial_{{\ell}} \partial_{j} g \\
+\tfrac{2}{945} \partial_{n} \cP^{ij} \cP^{k{\ell}} \partial_{q} \cP^{mn} \partial_{s} \cP^{pq} \partial_{u} \cP^{rs} \partial_{w} \cP^{tu} \cP^{vw} \partial_{v} \partial_{t} \partial_{r} \partial_{p} \partial_{m} \partial_{k} \partial_{i} f \partial_{{\ell}} \partial_{j} g
\big) \lefteqn{ {}+ \bar{o}(\hbar^7).}
\end{gather*}

}\normalsize%
\label{lastpage}

\end{document}